\theoremstyle{plain}
\newtheorem{Thm}{Theorem}[section]
\newtheorem{Lem}[Thm]{Lemma}
\newtheorem{Prop}[Thm]{Proposition}
\newtheorem{Cor}[Thm]{Corollary}
\theoremstyle{definition}
\tikzstyle{vertex}=[circle, draw, inner sep=0pt, minimum size=6pt] 
\newcommand{\vertex}{\node[vertex]}
\title{ The triangle-free graphs that are competition graphs of multipartite tournaments}
\author[1]{Myungho Choi
}
\author[1]{Minki Kwak
}
\author[1]{Suh-Ryung Kim
}
\affil[1]{Department of Mathematics Education,
Seoul National University, Seoul 08826, Republic of Korea}
\begin{document}
\maketitle
\begin{abstract}
In this paper, we discover all the triangle-free graphs that are competition graphs of multipartite tournaments.
%
%
%
\end{abstract}

\section{Introduction}

Given a digraph $D$,
$N^+_D(x)$ and $N^-_D(x)$ denote the sets of out-neighbors and in-neighbors, respectively, of a vertex $x$ in $D$. The nonnegative integers $|N^+_D(x)|$ and $|N^-_D(x)|$ are called the {\it outdegree} and the {\it indegree}, respectively, of $x$ and denoted by $d_D^+(x)$ and $d_D^-(x)$, respectively.
When no confusion is likely, we omit $D$ in $N^+_{D}(x)$, $N^-_{D}(x)$, $d_D^+(x)$, and $d_D^-(x)$, to just write $N^+(x)$, $N^-(x)$, $d^+(x)$, and $d^-(x)$, respectively.

The \emph{competition graph} $C(D)$ of a digraph $D$
is the (simple undirected) graph $G$ defined by
$V(G)=V(D)$ and $E(G)=\{uv \mid u,v \in V(D), u \neq v,
N_D^+(u) \cap N_D^+(v) \neq \emptyset \}$.
Competition graphs arose in
connection with an application in ecology (see \cite{Cohen})
and also have applications in coding,
radio transmission, and modeling of complex economic systems.
Early literature of the study on competition graphs is summarized
in the survey papers by Kim~\cite{kim1993competition} and Lundgren~\cite{lundgren1989food}.

For a digraph $D$,
the \emph{underlying graph} of $D$
is the graph $G$
such that $V(G)=V(D)$ and $E(G)=\{ uv \mid (u,v) \in A(D) \}$.
An \emph{orientation} of a graph $G$
is a digraph having no directed $2$-cycles, no loops, and no multiple arcs
whose underlying graph  is $G$.
A \emph{tournament} is an orientation of a complete graph.
A \emph{$k$-partite tournament} is  an orientation of a complete $k$-partite graph for some positive integer $k \geq 2$.

The competition graphs of tournaments and those of bipartite tournaments
have been actively studied
(see \cite{cho2002domination}, \cite{choi20171},  \cite{eoh2019niche},
  \cite{eoh2020m},
 \cite{factor2007domination},
  \cite{fisher2003domination},
  \cite{fisher1998domination}, and \cite{kim2016competition}
for papers related to this topic).

Recently, the authors of this paper began to study competition graphs of $k$-partite tournaments for $k \geq 2 $ and figured out
 the sizes of partite sets of multipartite tournaments whose competition graphs are complete~\cite{choi2022competitively}.
 
In this paper,
following up those results, we study triangle-free graphs which are competition graphs of multipartite tournaments.
we show that
 a connected triangle-free graph is the competition graph of a $k$-partite tournament if and only if $k \in \{3,4,5\}$, and list all the connected triangle-free graphs which are competition graphs of multipartite tournaments (Theorem~\ref{thm:complete-triangle-free-multipartite}).

We also show that
a disconnected triangle-free graph is the competition graph of a $k$-partite tournament if and only if $k \in \{2,3,4\}$, and list all the disconnected triangle-free graphs which are competition graphs of multipartite tournaments (Theorems~\ref{thm:charact-nonconnected-2-partite}, ~\ref{thm:charact-nonconnected-4-partite}, and~\ref{thm:charact-nonconnected-3-partite}).
\section{The connected triangle-free competition graphs of multipartite tournaments}\label{property}
    \begin{Lem}\label{lem9} Let $D$ be an orientation of $K_{n_1,n_2,n_3}$ whose competition graph has no isolated vertex for some positive integers $n_1,n_2$, and $n_3$. Then at least two of $n_1,n_2$, and $n_3$ are greater than $1$. \end{Lem}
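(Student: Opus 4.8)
The plan is to argue by contradiction: suppose at least two of the three part sizes equal $1$. Without loss of generality, say $n_1 = n_2 = 1$, with $V_1 = \{u\}$, $V_2 = \{v\}$, and $V_3$ the remaining part. I would then show that the competition graph must have an isolated vertex, contradicting the hypothesis.

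**The key observations.**

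First I would note that in an orientation $D$ of $K_{1,1,n_3}$, every arc either joins $u$ and $v$, or joins one of $u,v$ to a vertex of $V_3$; there are no arcs within $V_3$. Hence for a vertex $w \in V_3$, its out-neighborhood $N^+(w)$ is contained in $\{u,v\}$, so $|N^+(w)| \le 2$. For an edge $xy$ of the competition graph to exist, $x$ and $y$ must have a common out-neighbor. I would split into cases according to the arc between $u$ and $v$; say the arc is $(u,v)$ (the other case is symmetric). Then $v \in N^+(u)$ but $v \notin N^+(v)$, and $u$ has out-neighbors only among $\{v\} \cup V_3$, while $v$ has out-neighbors only in $V_3$. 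The crucial point is to find a vertex that preys on nobody in common with any other vertex. A natural candidate is a vertex $w \in V_3$ with $N^+(w) \subseteq \{u\}$ or with carefully controlled out-neighbors; more robustly, I would look at the vertex of $\{u,v\} \cup V_3$ that no one competes with by a counting/pigeonhole argument on how the two "targets" $u$ and $v$ can be shared.

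**The main idea and the anticipated obstacle.**

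The cleanest route: the only possible common out-neighbors available to create edges are $u$ and $v$ themselves (since vertices of $V_3$ can be dominated, but two vertices sharing a $V_3$-vertex as prey would have to be among $\{u,v\}$, giving at most the single edge $uv$). So essentially all edges of $C(D)$ come from pairs of vertices that both point into $u$ or both point into $v$. Now $N^-(u) \subseteq \{v\} \cup V_3$ and $N^-(v) \subseteq \{u\} \cup V_3$; but whichever of $u,v$ is the tail of the arc between them, say $(u,v)$, we get $u \notin N^-(u)$ forces nothing, yet we can track that a vertex $w\in V_3$ is isolated in $C(D)$ unless $w$ shares a prey. I expect the main obstacle to be handling the interplay cleanly: one must rule out the configuration where $V_3$ is large and every $V_3$-vertex manages to share either $u$ or $v$ as prey with some other vertex. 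The resolution is a parity/counting argument — with only two possible shared targets $u$ and $v$, and each of $u,v$ being pointed to by at most all of $V_3$, one shows that some $V_3$-vertex (for instance, one whose only out-neighbor is $v$, which must exist because $u$'s in-neighbors in $V_3$ and $v$'s in-neighbors in $V_3$ cannot both be all of $V_3$ while also $u$ or $v$ needs an out-neighbor) ends up isolated. Careful bookkeeping of which vertices of $V_3$ point to $u$, to $v$, to both, or to neither — combined with the single forced arc between $u$ and $v$ — should force an isolated vertex, completing the contradiction.
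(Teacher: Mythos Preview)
Your setup matches the paper's: assume $n_1=n_2=1$, write $V_1=\{u\}$, $V_2=\{v\}$, $V_3$ the third part, and take $(u,v)\in A(D)$ without loss of generality. From there, however, your proposed resolution has a genuine gap. The specific candidate you name for an isolated vertex---a $w\in V_3$ whose only out-neighbor is $v$---is \emph{not} isolated: such a $w$ shares the prey $v$ with $u$ (since $(u,v)\in A(D)$), so $w$ and $u$ are adjacent in $C(D)$. More broadly, your ``parity/counting'' plan is aimed at the wrong target: it is easy for every vertex of $V_3$ to be non-isolated (e.g.\ if all of $V_3$ points to $u$, they are pairwise adjacent), so no counting over $V_3$ alone will produce the contradiction.

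The missing idea, which the paper exploits directly, is to look at $v$ rather than at $V_3$. Since $(u,v)\in A(D)$ we have $N^+(v)\subseteq V_3$, while every $w\in V_3$ has $N^+(w)\subseteq\{u,v\}$; hence $N^+(v)\cap N^+(w)=\emptyset$ for all $w\in V_3$, so $v$ is not adjacent to any vertex of $V_3$. As $v$ is not isolated, $v$ must be adjacent to $u$, which forces a common out-neighbor $w\in V_3$ of $u$ and $v$. But then $(u,w),(v,w)\in A(D)$ give $N^+(w)=\emptyset$, so $w$ is isolated---the contradiction. Your bookkeeping scheme (partitioning $V_3$ by which of $u,v$ they point to) can be pushed through to the same conclusion, but only once you realize that the case ``no $w\in V_3$ has empty out-neighborhood'' forces $v$ itself to be isolated; that step is exactly the paper's observation, and it is what your proposal is missing.
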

        \begin{proof} Suppose, to the contrary, that at most one of $n_1,n_2$, and $n_3$ is greater than $1$, that is, at least two of $n_1,n_2$, and $n_3$ equal $1$. Without loss of generality, we may assume that $n_1=n_2=1$.
        Let $\{u\}, \{v\}$, and $V$ be the partite sets of $D$ with $|V|=n_3$.
        Since $D$ is an orientation of $K_{n_1,n_2,n_3}$, either $(u,v) \in A(D)$ or $(v,u) \in A(D)$.
        By symmetry, we may assume that $(u,v) \in A(D)$.
        Since $C(D)$ has no isolated vertex, $v$ is adjacent to some vertex.
        Since $(u,v) \in A(D)$, $v$ is not adjacent to any vertex in $V$.
        Thus $u$ and $v$ are adjacent in $C(D)$ and so $u$ and $v$ have a common out-neighbor $w$ in $V$.
        Then neither $u$ nor $v$ is an out-neighbor of $w$ and so $N^+(w)=\emptyset$.
        Therefore $w$ is isolated in $C(D)$, which is a contradiction.
        \end{proof}
\begin{Lem} \label{lem:number-edges-triangle-free}
Let $D$ be a digraph with $n$ vertices for a positive integer $n$.
If the competition graph $C(D)$ of $D$ is  triangle-free, then
 $|E(C(D))| \leq |A(D)| / 2 \leq |V(D)|$.
\end{Lem}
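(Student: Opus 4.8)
The plan is to extract from the triangle-free hypothesis a uniform bound on indegrees, and then run a double-counting argument. First I would observe that for every vertex $v \in V(D)$ the in-neighborhood $N^-_D(v)$ induces a clique in $C(D)$: any two distinct $x,y \in N^-_D(v)$ have $v$ as a common out-neighbor, hence $xy \in E(C(D))$. Since $C(D)$ is triangle-free, it contains no $K_3$, so $|N^-_D(v)| = d^-_D(v) \leq 2$ for every $v$. Summing over all vertices gives $|A(D)| = \sum_{v \in V(D)} d^-_D(v) \leq 2|V(D)|$, which is precisely the inequality $|A(D)|/2 \leq |V(D)|$.

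For the remaining inequality $|E(C(D))| \leq |A(D)|/2$, I would count the vertices according to indegree. Let $a$ denote the number of vertices $w$ with $d^-_D(w) = 2$. For such a vertex $w$, write $e(w)$ for the edge of $C(D)$ whose endpoints are the two elements of $N^-_D(w)$ (these are adjacent in $C(D)$ by the observation above). I claim the map $w \mapsto e(w)$, defined on the $a$ vertices of indegree $2$, is surjective onto $E(C(D))$: given any edge $uv \in E(C(D))$, there is a common out-neighbor $w$ of $u$ and $v$, so $\{u,v\} \subseteq N^-_D(w)$, and since $d^-_D(w) \leq 2$ we in fact have $N^-_D(w) = \{u,v\}$, i.e.\ $e(w) = uv$. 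Hence $|E(C(D))| \leq a$. On the other hand $|A(D)| = \sum_{v} d^-_D(v) \geq 2a$, so $|E(C(D))| \leq a \leq |A(D)|/2$, completing the chain of inequalities.

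There is no serious obstacle here; the argument is elementary once the indegree bound is in place. The only point requiring a little care is to make sure the surjection argument correctly uses $d^-_D(w) \leq 2$ to pin down $N^-_D(w)$ exactly (rather than merely as a superset), so that each edge of $C(D)$ is genuinely witnessed by some indegree-$2$ vertex and the count $|E(C(D))| \le a$ is valid.
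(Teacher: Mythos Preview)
Your proof is correct and follows essentially the same route as the paper: both use the indegree bound $d^-(v)\le 2$ from triangle-freeness, then establish the chain $|E(C(D))|\le |\{v:d^-(v)=2\}|\le |A(D)|/2\le |V(D)|$. The paper simply asserts this chain in one line, whereas you spell out the surjection $w\mapsto e(w)$ that justifies the first inequality; your version is a fleshed-out form of the same argument.
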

\begin{proof}
Suppose that the competition graph $C(D)$ of $D$ is triangle-free.
Then $d^-(v) \leq 2$ for each vertex $v$ in $D$.
Therefore \[ |E(C(D))| \leq |\{v \in V(D) \mid d^-(v)=2  \}| \leq \frac{|A(D)|}{2}\leq |V(D)|. \]
\end{proof}
\begin{Lem} \label{lem:no-ori-1,2,4-triangle}
There is no orientation of $K_{4,2,1}$ whose competition graph is triangle-free.
\end{Lem}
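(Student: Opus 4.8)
The plan is to argue by contradiction: assume $D$ is an orientation of $K_{4,2,1}$ whose competition graph $C(D)$ is triangle-free, and derive a triangle in $C(D)$ among three distinguished vertices. Write $\{w\}$, $\{v_1,v_2\}$, and $U=\{u_1,u_2,u_3,u_4\}$ for the three partite sets.

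First I would pin down every degree. Since $K_{4,2,1}$ has $4\cdot2+4\cdot1+2\cdot1=14$ edges, $|A(D)|=14$. For each vertex $x$, the set $N^-_D(x)$ induces a clique in $C(D)$ (any two of its members have common prey $x$), so triangle-freeness forces $d^-_D(x)\le 2$; as the seven in-degrees sum to $14$, every in-degree is exactly $2$. Hence $d^+_D(w)=6-2=4$, $d^+_D(v_i)=5-2=3$, and $d^+_D(u_j)=3-2=1$. Note that Lemma~\ref{lem:number-edges-triangle-free} only yields $|E(C(D))|\le|A(D)|/2=7$, which is not by itself a contradiction, so a structural argument beyond edge counting is needed — this is the crux.

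Next I would check that $v_1v_2\in E(C(D))$: both $N^+_D(v_1)$ and $N^+_D(v_2)$ are $3$-element subsets of the common neighborhood $N_{K_{4,2,1}}(v_1)=N_{K_{4,2,1}}(v_2)=\{w\}\cup U$, a set of size $5$, so they must intersect, giving $v_1$ and $v_2$ a common prey.

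Then comes the main step: $wv_1\in E(C(D))$ (and, symmetrically, $wv_2\in E(C(D))$). Put $A=N^+_D(w)\cap U$ and $B=N^+_D(v_1)\cap U$. From $N^+_D(w)\subseteq\{v_1,v_2\}\cup U$ with $|N^+_D(w)|=4$ we get $|A|\ge 2$, and from $N^+_D(v_1)\subseteq\{w\}\cup U$ with $|N^+_D(v_1)|=3$ we get $|B|\ge 2$. If $|A|\ge 3$, then $|A|+|B|>|U|=4$, so $A\cap B\ne\emptyset$. If $|A|=2$, then $\{v_1,v_2\}\subseteq N^+_D(w)$; in particular $w\to v_1$, so (no directed $2$-cycles) $w\notin N^+_D(v_1)$, whence $|B|=3$ and again $|A|+|B|>4$, so $A\cap B\ne\emptyset$. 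Either way $w$ and $v_1$ have a common prey in $U$, so $wv_1\in E(C(D))$, and the same argument with $v_2$ gives $wv_2\in E(C(D))$. Combined with $v_1v_2\in E(C(D))$, the set $\{w,v_1,v_2\}$ induces a triangle in $C(D)$, a contradiction. The one place that needs care is the case split $|A|=2$ versus $|A|\ge 3$ in this last step; everything else is routine degree bookkeeping.
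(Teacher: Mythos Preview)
Your proof is correct. Both you and the paper arrive at the same contradiction---a triangle on the three vertices $\{w,v_1,v_2\}$ of the two small parts---but the routes differ. The paper first observes that each $u_j\in U$ has outdegree~$1$ and a unique out-neighbor in $\{w,v_1,v_2\}$; it then argues that to keep $\{w,v_1,v_2\}$ from being a triangle some vertex of this set must never be hit by any $u_j$, pins that vertex down as $w$, reconstructs the entire orientation, and only then reads off the triangle $\{w,v_1,v_2\}$ anyway. Your argument is more direct: once the in-degrees are forced to be~$2$, you compute the out-degrees $d^+(w)=4$, $d^+(v_i)=3$ and use pigeonhole on the sizes of $N^+(w)\cap U$ and $N^+(v_i)\cap U$ (with the small case split on $|N^+(w)\cap U|$) to show immediately that each pair among $\{w,v_1,v_2\}$ shares a prey. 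This avoids reconstructing the digraph and is shorter; the paper's approach, on the other hand, makes transparent \emph{why} the triangle is unavoidable by exhibiting the unique obstruction and showing it still fails.
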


\begin{proof}
Suppose, to the contrary, that
there exists an orientation $D$ of $K_{4,2,1}$ whose competition graph is triangle-free.
Then $|A(D)|=14$.
Since $C(D)$ is triangle-free,
each vertex has indegree at most $2$.
Then, since $|V(D)|=7$ and $|A(D)|=14$,
\begin{equation} \label{eq:lem:no-ori-1,2,4-triangle}
d^-(v)=2
\end{equation}
for each vertex $v$ in $D$.
Let $V_1=\{x_1,x_2,x_3,x_4\}$, $V_2=\{y_1,y_2\}$, and $V_3=\{z\}$ be the partite sets of $D$.
By~\eqref{eq:lem:no-ori-1,2,4-triangle},
each vertex in $V_1$ is a common out-neighbor of two vertices in $V_2 \cup V_3$ and so has outdegree $1$.
We note that if a vertex $a$ in $V_2 \cup V_3$ is an out-neighbor of a vertex $b$ in $V_1$, then $b$ is a common out-neighbor of the two vertices in $V_2 \cup V_3 \setminus \{a\}$ and so they are adjacent in $C(D)$.
Therefore there must be a vertex in $V_2 \cup V_3 $ which is not an out-neighbor of any vertex in $V_1$ to prevent from creating a triangle $y_1y_2z$ in $C(D)$.
By~\eqref{eq:lem:no-ori-1,2,4-triangle}, such a vertex in $V_2 \cup V_3 $ must be $z$ and \[N^-(z)=\{y_1,y_2\}.\]
Then $N^+(z)=V_1$.
By~\eqref{eq:lem:no-ori-1,2,4-triangle} again,
each of $y_1$ and $y_2$ is a common out-neighbor of two vertices in $V_1$.
Since each vertex in $V_1$ has outdegree $1$,
$N^-(y_1) \cap N^-(y_2) = \emptyset$.
Without loss of generality,
we may assume $N^-(y_1)=\{x_1,x_2\}$ and $N^-(y_2)=\{x_3,x_4\}$.
Then
$N^+(x_1)=N^+(x_2)=\{y_1\}$ and $N^+(x_3)=N^+(x_4)=\{y_2\}$, so \[N^-(x_1)=\{y_2,z\} \text{ and } N^-(x_3)=\{y_1,z\}.\]
Hence $\{y_1,y_2,z\}$ forms a triangle in $C(D)$, which is a contradiction.
\end{proof}

    \begin{Lem} \label{lem:sizes-of-3partite}
    Let $n_1$, $n_2$, and $n_3$ be positive integers such that $n_1 \geq n_2 \geq n_3$.
     Suppose that there exists an orientation $D$ of $K_{n_1,n_2,n_3}$ whose competition graph $C(D)$ is  triangle-free. Then
     one of the following holds:  (a) $n_1=n_2=n_3=2$;
        (b) $n_1 \leq 3$, $n_2=2$, and $n_3=1$;
        (c) $n_2=n_3=1$.
 In particular, if $C(D)$ is connected, then the case (c) does not occur.
    \end{Lem}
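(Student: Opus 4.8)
The plan is to first squeeze the triple $(n_1,n_2,n_3)$ using the edge-count inequality of Lemma~\ref{lem:number-edges-triangle-free}, and then eliminate the single remaining borderline case with Lemma~\ref{lem:no-ori-1,2,4-triangle}. Since $D$ is an orientation of $K_{n_1,n_2,n_3}$ we have $|A(D)| = n_1n_2 + n_1n_3 + n_2n_3$ and $|V(D)| = n_1+n_2+n_3$, so triangle-freeness of $C(D)$ together with Lemma~\ref{lem:number-edges-triangle-free} yields
\[
n_1n_2 + n_1n_3 + n_2n_3 \;\le\; 2(n_1+n_2+n_3).
\]
I would then run a short case analysis on $n_3$, keeping in mind $n_1 \ge n_2 \ge n_3 \ge 1$.

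If $n_3 \ge 3$, then $n_1n_2 \ge 3n_1$ (since $n_2 \ge 3$), $n_1n_3 \ge 3n_3$ (since $n_1 \ge 3$), and $n_2n_3 \ge 3n_2$ (since $n_3 \ge 3$), so the left-hand side is at least $3(n_1+n_2+n_3)$, contradicting the displayed inequality. If $n_3 = 2$, the inequality collapses to $n_1n_2 \le 4$, which with $n_1 \ge n_2 \ge 2$ forces $n_1 = n_2 = 2$, i.e.\ case (a). If $n_3 = 1$, the inequality becomes $n_1n_2 \le n_1 + n_2 + 2$; when $n_2 \ge 3$ this fails because $n_1 \ge n_2 \ge 3$ gives $n_1 n_2 \ge 3n_1 > n_1 + n_2 + 2$, so $n_2 \in \{1,2\}$. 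If $n_2 = 1$ we are in case (c). If $n_2 = 2$, the inequality gives $n_1 \le 4$; but by Lemma~\ref{lem:no-ori-1,2,4-triangle} there is no orientation of $K_{4,2,1}$ with triangle-free competition graph, so $n_1 \le 3$, which is case (b). This exhausts all possibilities.

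For the final assertion, suppose $C(D)$ is connected and, for contradiction, that we are in case (c), so $n_2 = n_3 = 1$. Since $|V(D)| = n_1 + 2 \ge 3$, a connected graph on $V(D)$ has no isolated vertex, hence $C(D)$ has no isolated vertex; Lemma~\ref{lem9} then forces at least two of $n_1,n_2,n_3$ to exceed $1$, contradicting $n_2 = n_3 = 1$. Therefore case (c) cannot occur.

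I do not expect any real obstacle here: the argument is a clean combination of the three preceding lemmas. The only point to watch is extracting the sharpest consequence of the edge-count inequality in each branch, and recognizing that the one triple the inequality alone still admits, namely $(4,2,1)$, is precisely the configuration ruled out by Lemma~\ref{lem:no-ori-1,2,4-triangle}; everything else is bookkeeping.
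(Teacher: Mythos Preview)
Your proof is correct and follows essentially the same approach as the paper: both start from the arc-count inequality of Lemma~\ref{lem:number-edges-triangle-free}, run a short case analysis on $n_3$ (the paper rewrites the inequality as $(n_1-1)(n_2-1)\le 3$ when $n_3=1$, which is equivalent to your $n_1n_2 \le n_1+n_2+2$), invoke Lemma~\ref{lem:no-ori-1,2,4-triangle} to kill the $(4,2,1)$ case, and use Lemma~\ref{lem9} for the ``in particular'' clause.
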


        \begin{proof} It is easy to check that
         $|A(D)|=n_1 n_2 + n_2 n_3 + n_3 n_1$.
     Then, by Lemma~\ref{lem:number-edges-triangle-free},
        \begin{eqnarray*}
          n_1 n_2 + n_2 n_3 + n_3 n_1 \leq 2(n_1 + n_2 + n_3).
        \end{eqnarray*}
        Thus
        \begin{eqnarray}\label{lem10:eq3}
          n_1(n_2-2)+n_2(n_3-2)+n_3(n_1-2) \leq 0
        \end{eqnarray}
        and so at least one of $n_1-2$, $n_2-2$, and $n_3-2$ is nonpositive.
        Since $n_1 \geq n_2 \geq n_3$, $n_3-2 \leq 0$.
        Suppose $n_3=2$.
        Then, by \eqref{lem10:eq3},
        $n_1n_2 \leq 4$ and so $(n_1,n_2,n_3)=(2,2,2)$.
        Now we suppose $n_3=1$. Then, by \eqref{lem10:eq3}, $ (n_1-1)(n_2-1)\leq 3$.
        Since $n_1 \geq n_2$,
        $n_2 \leq 2$.
        Suppose $n_2=2$. Then $n_1 \leq 4$.
        If $n_1 =4$, then $(n_1,n_2,n_3)=(4,2,1)$, which contradicts Lemma~\ref{lem:no-ori-1,2,4-triangle}.
        Therefore $n_1 \leq 3$ and so (b) holds. If $n_2=1$, then $n_3=1$ and so (c) holds.
If $C(D)$ is connected, then $C(D)$ has no isolated vertices and so none of $n_1$ and $n_2$ equals $1$ by Lemma~\ref{lem9} and so the ``in particular" part is true.
        \end{proof}
The following lemma is an immediate consequence of Lemma~\ref{lem:sizes-of-3partite}.
         \begin{Lem}\label{cor13-2} For a connected triangle-free graph $G$
         of order $n$,
          if $G$ is the competition graph of a tripartite tournament, then
          $n \in \{5,6\}$. \end{Lem}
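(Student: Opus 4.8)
The plan is to obtain this as an immediate consequence of Lemma~\ref{lem:sizes-of-3partite}. Suppose $G$ is the competition graph of a tripartite tournament $D$. By definition $D$ is an orientation of $K_{n_1,n_2,n_3}$ for some positive integers $n_1,n_2,n_3$, and after relabeling the three partite sets we may assume $n_1 \ge n_2 \ge n_3$. Since $G = C(D)$ is triangle-free and connected, Lemma~\ref{lem:sizes-of-3partite} applies, and its ``in particular'' clause rules out case (c); hence either $(n_1,n_2,n_3) = (2,2,2)$, or $n_1 \le 3$, $n_2 = 2$, $n_3 = 1$.

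It then remains only to read off the order $n = n_1 + n_2 + n_3$ in each surviving case. In case (a) we get $n = 6$. In case (b) the only triples satisfying $n_1 \le 3$, $n_2 = 2$, $n_3 = 1$ are $(2,2,1)$ and $(3,2,1)$, which give $n = 5$ and $n = 6$ respectively. Therefore $n \in \{5,6\}$, as claimed.

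There is essentially no obstacle to overcome here: all the work has already been done in Lemma~\ref{lem:sizes-of-3partite} (which in turn leans on the edge-counting bound of Lemma~\ref{lem:number-edges-triangle-free} and the excluded configuration $K_{4,2,1}$ of Lemma~\ref{lem:no-ori-1,2,4-triangle}). The only point that genuinely needs the hypotheses is that \emph{connectedness} is exactly what discards case (c), in which $n_2 = n_3 = 1$ and $n_1$ may be arbitrarily large; without it the order would be unbounded. Note also that we are only asserting the bound $n \in \{5,6\}$ and need not verify that both values are actually attained by connected triangle-free competition graphs — that is addressed separately in Theorem~\ref{thm:complete-triangle-free-multipartite}.
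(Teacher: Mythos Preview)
Your proof is correct and matches the paper's approach exactly: the paper also states this lemma as an immediate consequence of Lemma~\ref{lem:sizes-of-3partite}, with no further argument given. Your write-up simply spells out the trivial deduction that the paper leaves implicit.
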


    \begin{Lem}\label{thm12} For a positive integer $n \geq3$, a cycle $C_n$ of length $n$ is the competition graph of a tripartite tournament if and only if $n=6$. \end{Lem}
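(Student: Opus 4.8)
The plan is to prove both directions of the equivalence. For the forward direction I will show that if $C_n$ is the competition graph of a tripartite tournament $D$ then $n=6$, treating $n=3$ and $n\ge 4$ separately; for the backward direction I will exhibit one explicit tripartite tournament whose competition graph is $C_6$.

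First the case $n=3$. Here $C_3=K_3$ is not triangle-free, so the earlier lemmas do not apply, but a direct argument works: if $C(D)=C_3$, then $V(D)$ has three vertices and $D$ is tripartite, so $D$ is an orientation of $K_{1,1,1}$, i.e.\ a tournament on $\{a,b,c\}$; since $a$ and $b$ are adjacent in $C(D)$ their common out-neighbour must be $c$ (there are no loops), so $(a,c),(b,c)\in A(D)$, and applying the same reasoning to the edge $bc$ of $C(D)$ gives $(b,a),(c,a)\in A(D)$. Then $(a,c)$ and $(c,a)$ are both arcs, contradicting that $D$ is an orientation. Next, for $n\ge 4$ the graph $C_n$ is connected and triangle-free of order $n$, so Lemma~\ref{cor13-2} already gives $n\in\{5,6\}$, and it only remains to exclude $n=5$. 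If $n=5$, then by the ``in particular'' clause of Lemma~\ref{lem:sizes-of-3partite} the partite-set sizes of $D$, written non-increasingly, are $(2,2,2)$, $(3,2,1)$, or $(2,2,1)$, and only $(2,2,1)$ sums to $5$; hence $D$ is an orientation of $K_{2,2,1}$ and $|A(D)|=2\cdot2+2\cdot1+1\cdot2=8$. But $C(D)=C_5$ is triangle-free with five edges, while Lemma~\ref{lem:number-edges-triangle-free} bounds $|E(C(D))|$ by $|A(D)|/2=4$, a contradiction. So $n=6$, completing the forward direction.

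For the converse, let $D$ be the digraph on vertex set $\ZZ_6$ with arc set $\{(i,j):i-j\equiv 1\ \text{or}\ 2\pmod 6\}$. Its underlying graph omits exactly the antipodal pairs $\{0,3\},\{1,4\},\{2,5\}$, and, since $\{1,2\}$ and $\{-1,-2\}$ are disjoint in $\ZZ_6$, $D$ has no loops, no multiple arcs, and no directed $2$-cycles; hence $D$ is an orientation of $K_{2,2,2}$ and so a tripartite tournament. Because $N^-_D(i)=\{i+1,i+2\}$ has exactly two elements for every $i$, two distinct vertices $u,v$ are adjacent in $C(D)$ if and only if $\{u,v\}=N^-_D(w)$ for some $w$, so $E(C(D))=\{\{i+1,i+2\}:i\in\ZZ_6\}$, which is precisely the edge set of the $6$-cycle $0\,1\,2\,3\,4\,5\,0$. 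Thus $C(D)\cong C_6$, as required.

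I expect the only real obstacle to be locating this construction. When $n=6$ the bound $|E(C(D))|\le|A(D)|/2\le|V(D)|$ must hold with equality, which forces $D$ to be an Eulerian orientation of $K_{2,2,2}$ (every vertex of indegree and outdegree $2$); but a carelessly chosen Eulerian orientation typically yields a competition graph that is two vertex-disjoint triangles, or one containing a triangle, rather than $C_6$. The content of the construction is exactly that the cyclic ``difference'' digraph makes the six indegree-$2$ neighbourhoods $\{i+1,i+2\}$ link up into a single $6$-cycle; once this orientation is in hand, every remaining step is a routine invocation of Lemmas~\ref{cor13-2}, \ref{lem:sizes-of-3partite}, and \ref{lem:number-edges-triangle-free} together with an arc count.
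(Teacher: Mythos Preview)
Your proof is correct and follows essentially the same route as the paper: use Lemma~\ref{cor13-2} to reduce to $n\in\{5,6\}$, eliminate $n=5$ by an arc-count contradiction (you invoke Lemma~\ref{lem:number-edges-triangle-free} directly where the paper recomputes the bound inline), and for $n=6$ exhibit an explicit $2$-regular orientation of $K_{2,2,2}$ whose competition graph is $C_6$. The only cosmetic difference is at $n=3$: you give a direct contradiction, while the paper invokes Lemma~\ref{lem9} (which, contrary to your remark, does apply here since it requires only that $C(D)$ have no isolated vertex, not that it be triangle-free).
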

        \begin{proof} Let $D$ be the digraph in Figure~\ref{fig2} which is an orientation of $K_{2,2,2}$.
\begin{figure}
\begin{center}
\begin{tikzpicture}[auto,thick, scale=1.2]
    \tikzstyle{player}=[minimum size=5pt,inner sep=0pt,outer sep=0pt,draw,circle]
    \tikzstyle{source}=[minimum size=5pt,inner sep=0pt,outer sep=0pt,ball color=black, circle]
    \tikzstyle{arc}=[minimum size=5pt,inner sep=1pt,outer sep=1pt, font=\footnotesize]
    \draw (270:1.4cm) node (name) {$D$};
    \path (0:1cm)    node [player]  (x1) {};
    \path (60:1cm)   node [player]  (x2) {};
    \path (120:1cm)   node [player]  (y1) {};
    \path (180:1cm)   node [player]  (y2) {};
    \path (240:1cm)   node [player]  (z1) {};
    \path (300:1cm)   node [player]  (z2) {};

    \foreach \tail/\head in {z/x,x/y,y/z}{
        \foreach \i in {1,2}{
            \draw[black,thick,-stealth] (\tail\i) - +(\head1);%
        }
    }

    \foreach \n/\a/\m in {z/x/y,x/y/z,y/z/x}{
        \draw[black,thick,-stealth] (\n2) - +(\m2);%
        \draw[black,thick,-stealth] (\a1) - +(\m2);%
    }
\end{tikzpicture}
\hspace{5em}
\begin{tikzpicture}[auto,thick, scale=1.2]
    \tikzstyle{player}=[minimum size=5pt,inner sep=0pt,outer sep=0pt,draw,circle]
    \tikzstyle{source}=[minimum size=5pt,inner sep=0pt,outer sep=0pt,ball color=black, circle]
    \tikzstyle{arc}=[minimum size=5pt,inner sep=1pt,outer sep=1pt, font=\footnotesize]

    \draw (270:1.4cm) node (name) {$C(D)$};
    \path (0:1cm)    node [player]  (1) {};
    \path (60:1cm)   node [player]  (2) {};
    \path (120:1cm)   node [player]  (3) {};
    \path (180:1cm)   node [player]  (4) {};
    \path (240:1cm)   node [player]  (5) {};
    \path (300:1cm)   node [player]  (6) {};

    \draw (1)--(2)--(3)--(4)--(5)--(6)--(1);
\end{tikzpicture}
\caption{A digraph $D$ which is an orientation of $K_{2,2,2}$ and whose competition graph is isomorphic to $C_6$}\label{fig2}
\end{center}
\end{figure}
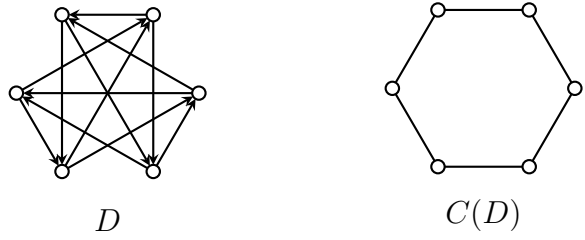
        It is easy to check that $C(D) \cong C_6$. Therefore the ``if'' part is true.

        Now suppose that a cycle $C_n$ is the competition graph of a tripartite tournament $T$ for a positive integer $n \geq 3$..
        If $n=3$, then the only possible size of partite sets is $(1,1,1)$ which is impossible by Lemma~\ref{lem9}. Therefore $n \geq 4$.
        Thus
        $n=5$ or $n=6$ by
        Lemma~\ref{cor13-2}.
        Suppose, to the contrary, that $n=5$.
        Then $T$ is an orientation of $K_{2,2,1}$ by Lemma~\ref{lem:sizes-of-3partite}.
        Since $C(T)$ is triangle-free, $d^-(v) \leq 2$ for each $v \in V(T)$.
        Moreover, since each edge is a maximal clique and there are five edges in $C(T)$, each vertex has indegree $2$ in $T$.
        Therefore
        \begin{eqnarray*}
           8 = |A(T)| = \sum_{v \in V(T)} d^-(v) = 10
        \end{eqnarray*}
        and we reach a contradiction. Thus $n=6$.
        \end{proof}
    \begin{Lem}\label{thm13} For a positive integer $n \geq3$, a path $P_n$ of length $n-1$ is the competition graph of a tripartite tournament if and only if $n=6$. \end{Lem}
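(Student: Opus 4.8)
The plan is to follow the template of the proof of Lemma~\ref{thm12}. For the ``if'' direction I would exhibit a single explicit orientation $D$ of $K_{3,2,1}$, display it in a figure as in Figure~\ref{fig2}, and verify by inspecting the out-neighbourhoods that $C(D) \cong P_6$; such an orientation does exist. It is worth noting why the host must be $K_{3,2,1}$ and not $K_{2,2,2}$: on an orientation of $K_{2,2,2}$ the $12$ arcs together with the requirement that each of the $5$ edges of $P_6$ have its own common out-neighbour force every vertex to have indegree exactly $2$, and then $E(C(D))$ is the multiset of the six sets $N^-(v)$, which can account for the five edges of $P_6$ only if the two degree-one end-vertices of $P_6$ were adjacent, which they are not.

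For the ``only if'' direction, suppose that $P_n$ is the competition graph of a tripartite tournament $T$ for some $n \ge 3$. Since $P_n$ is connected and triangle-free of order $n$, Lemma~\ref{cor13-2} gives $n \in \{5,6\}$, which already settles $n = 3$, $n = 4$, and $n \ge 7$. So it remains to exclude $n = 5$. If $n = 5$, then $|V(T)| = 5$, so Lemma~\ref{lem:sizes-of-3partite} (with its ``in particular'' clause, $P_5$ being connected) forces $T$ to be an orientation of $K_{2,2,1}$ and hence $|A(T)| = 8$. Since $C(T)$ is triangle-free, $d^-(v) \le 2$ for every $v$. Each of the four edges of $P_5$ has a common out-neighbour, and no vertex can be a common out-neighbour for two distinct edges (it would then have indegree at least $3$), so at least four vertices have indegree $2$; as $\sum_v d^-(v) = 8$, exactly four vertices have indegree $2$ and one vertex $w_0$ has indegree $0$. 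Note also that $E(C(T)) = \{\, N^-(v) : d^-(v) = 2 \,\}$, because any common out-neighbour of two distinct vertices has exactly those two vertices as its in-neighbourhood.

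The crux is a pigeonhole argument at $w_0$. In $K_{2,2,1}$ every vertex has at least three neighbours, so $w_0$ has at least three out-neighbours (all arcs at $w_0$ leave $w_0$ since $d^-(w_0) = 0$). For each out-neighbour $w$ of $w_0$ we have $d^-(w) = 2$ and $w_0 \in N^-(w)$, so $N^-(w) = \{w_0, x_w\}$ for some $x_w \ne w_0$, whence $w_0$ and $x_w$ are adjacent in $C(T)$. As $C(T) = P_5$ has maximum degree $2$, the vertices $x_w$ assume at most two values although there are at least three of them, so two distinct out-neighbours $w \ne w'$ of $w_0$ satisfy $N^-(w) = N^-(w')$, i.e.\ they are common out-neighbours of the same pair. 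Consequently $|E(C(T))| = |\{\, N^-(v) : d^-(v) = 2 \,\}| \le 3 < 4$, a contradiction. Therefore $n = 6$.

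I expect the only real obstacle to be the exclusion of $n = 5$: in contrast with the cycle case, the arc count does not by itself rule out $K_{2,2,1}$, so one must extract the extra structure carried by the unique indegree-zero vertex. The remaining ingredients — the two reductions via Lemmas~\ref{cor13-2} and~\ref{lem:sizes-of-3partite}, and the check of a concrete orientation of $K_{3,2,1}$ for the ``if'' part — are routine.
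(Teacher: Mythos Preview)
Your proof is correct and follows the paper's approach: exhibit an explicit orientation of $K_{3,2,1}$ for the ``if'' part, reduce to $n\in\{5,6\}$ via Lemmas~\ref{cor13-2} and~\ref{lem:sizes-of-3partite}, and exclude $n=5$ by locating the unique indegree-zero vertex in the $K_{2,2,1}$ orientation and exploiting its outdegree $\ge 3$ against the maximum degree $2$ of $P_5$. Your pigeonhole phrasing of that last step is simply the contrapositive of the paper's direct claim that this vertex is incident to at least three edges of $C(T)$, and in fact makes explicit the distinctness of the four in-neighbourhoods that the paper's one-line conclusion leaves implicit.
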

        \begin{proof} Let $D$ be the digraph in Figure~\ref{fig3} which is an orientation of $K_{3,2,1}$.
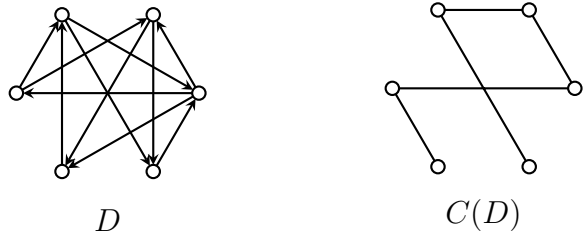
\begin{figure}
\begin{center}
\begin{tikzpicture}[auto,thick, scale=1.2]
    \tikzstyle{player}=[minimum size=5pt,inner sep=0pt,outer sep=0pt,draw,circle]
    \tikzstyle{source}=[minimum size=5pt,inner sep=0pt,outer sep=0pt,ball color=black, circle]
    \tikzstyle{arc}=[minimum size=5pt,inner sep=1pt,outer sep=1pt, font=\footnotesize]

    \draw (270:1.4cm) node (name) {$D$};
    \path (0:1cm)    node [player]  (x1) {};
    \path (60:1cm)   node [player]  (y1) {};
    \path (120:1cm)   node [player]  (y2) {};
    \path (180:1cm)   node [player]  (z1) {};
    \path (240:1cm)   node [player]  (z2) {};
    \path (300:1cm)   node [player]  (z3) {};

    \draw[black,thick,-stealth] (x1) - + (y1);
    \draw[black,thick,-stealth] (x1) - + (z1);
    \draw[black,thick,-stealth] (x1) - + (z2);
    \draw[black,thick,-stealth] (y2) - + (x1);
    \draw[black,thick,-stealth] (z3) - + (x1);
    \draw[black,thick,-stealth] (y1) - + (z2);
    \draw[black,thick,-stealth] (y1) - + (z3);
    \draw[black,thick,-stealth] (z1) - + (y1);
    \draw[black,thick,-stealth] (z1) - + (y2);
    \draw[black,thick,-stealth] (z2) - + (y2);
    \draw[black,thick,-stealth] (y2) - + (z3);
\end{tikzpicture}
\hspace{5em}
\begin{tikzpicture}[auto,thick, scale=1.2]
    \tikzstyle{player}=[minimum size=5pt,inner sep=0pt,outer sep=0pt,draw,circle]
    \tikzstyle{source}=[minimum size=5pt,inner sep=0pt,outer sep=0pt,ball color=black, circle]
    \tikzstyle{arc}=[minimum size=5pt,inner sep=1pt,outer sep=1pt, font=\footnotesize]

    \draw (270:1.4cm) node (name) {$C(D)$};

    \path (0:1cm)    node [player]  (1) {};
    \path (60:1cm)   node [player]  (2) {};
    \path (120:1cm)   node [player]  (3) {};
    \path (180:1cm)   node [player]  (4) {};
    \path (240:1cm)   node [player]  (5) {};
    \path (300:1cm)   node [player]  (6) {};
    
     \draw (6)--(3)--(2)--(1)--(4)--(5);
\end{tikzpicture}
\caption{A digraph $D$ which is an orientation of $K_{3,2,1}$ and whose competition graph is isomorphic to $P_6$}\label{fig3}
\end{center}
\end{figure}
        It is easy to check that $C(D) \cong P_6$. Therefore the ``if'' part is true.

        Now suppose that a path $P_n$ is the competition graph of a tripartite tournament $T$ for a positive integer $n \geq 3$.
        Since $P_n$ is connected and triangle-free, by Lemma~\ref{cor13-2}, $n \in \{5,6\}$.
        Suppose, to the contrary, that $n=5$.
        Then $T$ is an orientation of $K_{2,2,1}$ by Lemma~\ref{lem:sizes-of-3partite}.
        Let $V_1, V_2$, and $V_3$ be the partite sets with $|V_1|=|V_2|=2$ and $|V_3|=1$.
        Since $C(T)$ is triangle-free, $d^-(v) \leq 2$ for each $v \in V(T)$.
        Since there are four edges in $C(T)$, there are four vertices of indegree $2$ in $T$.
        Since $|A(T)|=8$ and $n=5$, there exists exactly one vertex of indegree $0$ in $T$.
        Let $u$ be the vertex of indegree $0$ in $T$.
        If $V_3=\{u\}$, then $N^+(u)=V_1 \cup V_2$.
        Otherwise, either $N^+(u)=V_2\cup V_3$ or $N^+(u)=V_1 \cup V_3$.
        Therefore $d^+(u)=3$ or $4$.
        Thus $u$ is incident to at least three edges in $C(T)$, which is impossible on a path.
        Hence $n=6$.
        \end{proof}
    \begin{Lem}\label{lem:K_{1,2,3}} Let $D$ be an orientation of $K_{3,2,1}$ whose competition graph is connected and triangle-free.
    Then the following are true:
    \begin{itemize}
        \item[(1)] There is no vertex of indegree $0$ in $D$;
        \item[(2)] There are exactly five vertices with indegree $2$ in $D$ no two of which have the same in-neighborhood.
    \end{itemize}
    \end{Lem}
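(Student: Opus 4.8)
The plan is to run a double-counting argument on indegrees, in the spirit of the proofs of Lemmas~\ref{lem:number-edges-triangle-free} and~\ref{lem:no-ori-1,2,4-triangle}, and then play the resulting upper bound on $|E(C(D))|$ against the lower bound coming from connectedness.

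First I would record the basic counts. The graph $K_{3,2,1}$ has $3+2+1=6$ vertices and $3\cdot 2+2\cdot 1+1\cdot 3=11$ edges, so $|V(D)|=6$ and $|A(D)|=11$. Since $C(D)$ is triangle-free, no vertex of $D$ can have indegree at least $3$ (three in-neighbors of a common vertex would span a triangle in $C(D)$), so $d^-(v)\le 2$ for every $v$. Summing over all vertices, $11=\sum_{v\in V(D)}d^-(v)\le 2\cdot 6=12$, so the total deficit from $12$ is only $1$. A vertex of indegree $0$ would force the remaining five vertices to carry total indegree $11$, which is impossible; hence no vertex of $D$ has indegree $0$, which is statement~(1). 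Moreover, with every indegree lying in $\{1,2\}$ and the sum equal to $11$, exactly one vertex has indegree $1$ and the other five have indegree $2$, which gives the cardinality claim in statement~(2).

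For the ``distinct in-neighborhoods'' part I would bound $|E(C(D))|$ from above. Every edge $uw$ of $C(D)$ admits a common out-neighbor $v$ of $u$ and $w$; then $\{u,w\}\subseteq N^-(v)$, and since $d^-(v)\le 2$ this forces $N^-(v)=\{u,w\}$. Thus the assignment sending an edge of $C(D)$ to the pair $N^-(v)$ for such a witness $v$ is well defined and injective (the pair $N^-(v)$ recovers the edge), so $|E(C(D))|\le |\{v\in V(D): d^-(v)=2\}|=5$. On the other hand, $C(D)$ is connected on $6$ vertices, so $|E(C(D))|\ge 5$. Therefore $|E(C(D))|=5$. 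Since each edge of $C(D)$ is of the form $N^-(v)$ for some indegree-$2$ vertex $v$, and there are exactly five indegree-$2$ vertices producing exactly five distinct edges, the five sets $N^-(v)$ must be pairwise distinct (if two coincided, the five indegree-$2$ vertices would yield at most four edges). This proves the remaining half of~(2).

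There is no real obstacle here: the lemma reduces to the indegree count together with the elementary fact that a connected graph on $6$ vertices has at least $5$ edges. The only point deserving a line of care is the injectivity of the edge-to-witness correspondence, i.e.\ that a single vertex of indegree $2$ can witness only one edge of $C(D)$; this is immediate once one observes that the witness $v$ determines $N^-(v)$, hence the edge. So the only ``work'' is assembling the arithmetic cleanly.
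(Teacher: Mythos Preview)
Your proof is correct and follows essentially the same route as the paper: both use the indegree bound $d^-(v)\le 2$ and the arc count $|A(D)|=11$ to force the indegree sequence $(2,2,2,2,2,1)$, then squeeze $|E(C(D))|$ between the connectedness lower bound $5$ and the upper bound $5$ coming from the indegree-$2$ vertices. You simply make explicit the edge-to-witness injection that the paper leaves implicit in its final sentence.
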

        \begin{proof} Since $C(D)$ is triangle-free, $d^-(v) \leq 2$ for each $v \in V(D)$.
        If there is a vertex of indegree $0$ in $D$, then $11=\sum_{v \in V(D)} d^-(v) \leq 10$ and we reach a contradiction.
        Therefore the statement (1) is true and so the indegree sequence of $D$ is $(2,2,2,2,2,1)$.
        Meanwhile, since $C(D)$ is connected, the number of edges of $C(D)$ is at least $5$.
        Thus there are exactly five vertices with indegree $2$ in $D$ no two of which have the same in-neighborhood.
        \end{proof}

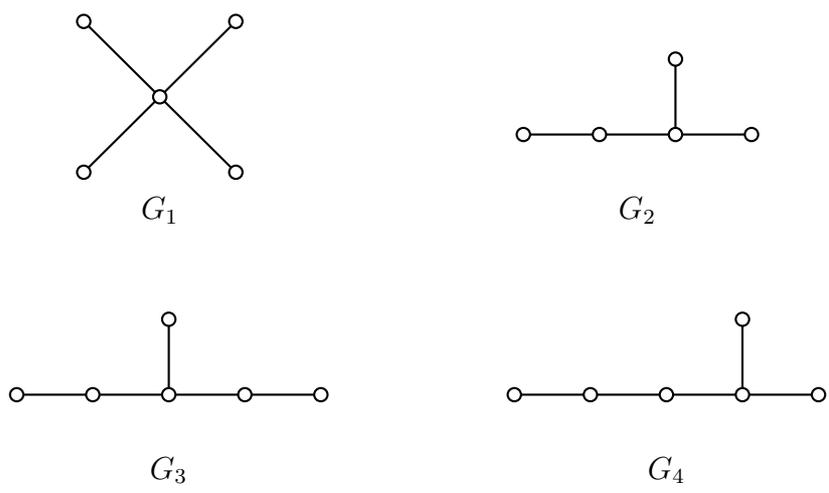
\begin{figure}
\begin{center}
\begin{tikzpicture}[auto,thick, scale=1.0]
    \tikzstyle{player}=[minimum size=5pt,inner sep=0pt,outer sep=0pt,draw,circle]
    \tikzstyle{source}=[minimum size=5pt,inner sep=0pt,outer sep=0pt,ball color=black, circle]
    \tikzstyle{arc}=[minimum size=5pt,inner sep=1pt,outer sep=1pt, font=\footnotesize]

    \draw (270:1.5cm) node (name) {$G_1$};

    \path (0,0)    node [player]  (1) {};
    \path (-1,1)   node [player]  (2) {};
    \path (-1,-1)   node [player]  (3) {};
    \path (1,1)   node [player]  (4) {};
    \path (1,-1)   node [player]  (5) {};

    \draw (1)--(2);
    \draw (1)--(3);
    \draw (1)--(4);
    \draw (1)--(5);
\end{tikzpicture}
\hspace{8em}
\begin{tikzpicture}[auto,thick, scale=1.0]
    \tikzstyle{player}=[minimum size=5pt,inner sep=0pt,outer sep=0pt,draw,circle]
    \tikzstyle{source}=[minimum size=5pt,inner sep=0pt,outer sep=0pt,ball color=black, circle]
    \tikzstyle{arc}=[minimum size=5pt,inner sep=1pt,outer sep=1pt, font=\footnotesize]

    \draw (270:1cm) node (name) {$G_2$};

    \path (0.5,0)    node [player]  (1) {};
    \path (-0.5,0)   node [player]  (2) {};
    \path (-1.5,0)   node [player]  (3) {};
    \path (1.5,0)   node [player]  (4) {};
    \path (0.5,1)   node [player]  (5) {};

    \draw (1)--(2)--(3);
    \draw (1)--(4);
    \draw (1)--(5);

\end{tikzpicture}

\vskip1cm
\begin{tikzpicture}[auto,thick, scale=1.0]
    \tikzstyle{player}=[minimum size=5pt,inner sep=0pt,outer sep=0pt,draw,circle]
    \tikzstyle{source}=[minimum size=5pt,inner sep=0pt,outer sep=0pt,ball color=black, circle]
    \tikzstyle{arc}=[minimum size=5pt,inner sep=1pt,outer sep=1pt, font=\footnotesize]

    \draw (270:1cm) node (name) {$G_3$};

    \path (0,0)    node [player]  (1) {};
    \path (-1,0)   node [player]  (2) {};
    \path (-2,0)   node [player]  (3) {};
    \path (1,0)   node [player]  (4) {};
    \path (2,0)   node [player]  (5) {};
    \path (0,1)   node [player]  (6) {};

    \draw (1)--(2)--(3);
    \draw (1)--(4)--(5);
    \draw (1)--(6);

\end{tikzpicture}
\hspace{5em}
\begin{tikzpicture}[auto,thick, scale=1.0]
    \tikzstyle{player}=[minimum size=5pt,inner sep=0pt,outer sep=0pt,draw,circle]
    \tikzstyle{source}=[minimum size=5pt,inner sep=0pt,outer sep=0pt,ball color=black, circle]
    \tikzstyle{arc}=[minimum size=5pt,inner sep=1pt,outer sep=1pt, font=\footnotesize]

    \draw (270:1cm) node (name) {$G_4$};

    \path (0,0)    node [player]  (1) {};
    \path (-1,0)   node [player]  (2) {};
    \path (-2,0)   node [player]  (3) {};
    \path (1,0)   node [player]  (4) {};
    \path (2,0)   node [player]  (5) {};
    \path (1,1)   node [player]  (6) {};

    \draw (1)--(2)--(3);
    \draw (1)--(4)--(5);
    \draw (4)--(6);

\end{tikzpicture}
\caption{Connected triangle-free graphs mentioned in Lemma~\ref{thm:tri-fre}}\label{fig:tri-fre}
\end{center}
\end{figure}
\begin{figure}
\begin{center}
\begin{tikzpicture}[auto,thick, scale=1.2]
    \tikzstyle{player}=[minimum size=5pt,inner sep=0pt,outer sep=0pt,draw,circle]
    \tikzstyle{source}=[minimum size=5pt,inner sep=0pt,outer sep=0pt,ball color=black, circle]
    \tikzstyle{arc}=[minimum size=5pt,inner sep=1pt,outer sep=1pt, font=\footnotesize]

    \draw (270:1.4cm) node (name) {$D_1$};

   \path (0:1cm)    node [player]  (x1) {};
    \path (72:1cm)   node [player]  (y1) {};
    \path (144:1cm)   node [player]  (y2) {};
    \path (216:1cm)   node [player]  (z1) {};
    \path (288:1cm)   node [player]  (z2) {};

    \draw[black,thick,-stealth] (x1) - + (y1);
    \draw[black,thick,-stealth] (x1) - + (y2);
    \draw[black,thick,-stealth] (x1) - + (z1);
    \draw[black,thick,-stealth] (x1) - + (z2);
    \draw[black,thick,-stealth] (y1) - + (z2);
    \draw[black,thick,-stealth] (y2) - + (z1);
    \draw[black,thick,-stealth] (z1) - + (y1);
    \draw[black,thick,-stealth] (z2) - + (y2);

\end{tikzpicture}
\hspace{5em}
\begin{tikzpicture}[auto,thick, scale=1.2]
    \tikzstyle{player}=[minimum size=5pt,inner sep=0pt,outer sep=0pt,draw,circle]
    \tikzstyle{source}=[minimum size=5pt,inner sep=0pt,outer sep=0pt,ball color=black, circle]
    \tikzstyle{arc}=[minimum size=5pt,inner sep=1pt,outer sep=1pt, font=\footnotesize]
     \draw (270:1.4cm) node (name) {$D_2$};
  \path (0:1cm)    node [player]  (x1) {};
    \path (72:1cm)   node [player]  (y1) {};
    \path (144:1cm)   node [player]  (y2) {};
    \path (216:1cm)   node [player]  (z1) {};
    \path (288:1cm)   node [player]  (z2) {};

    \draw[black,thick,-stealth] (x1) - + (y2);
    \draw[black,thick,-stealth] (x1) - + (z2);
    \draw[black,thick,-stealth] (y1) - + (x1);
    \draw[black,thick,-stealth] (y1) - + (z1);
    \draw[black,thick,-stealth] (y1) - + (z2);
    \draw[black,thick,-stealth] (y2) - + (z1);
    \draw[black,thick,-stealth] (z1) - + (x1);
    \draw[black,thick,-stealth] (z2) - + (y2);

\end{tikzpicture}
\caption{Two digraphs $D_1$ and $D_2$ which are orientations of $K_{2,2,1}$ and whose competition graphs are isomorphic to $G_1$ and $G_2$, respectively}\label{fig:n=5:tri-fre}
\end{center}
\end{figure}

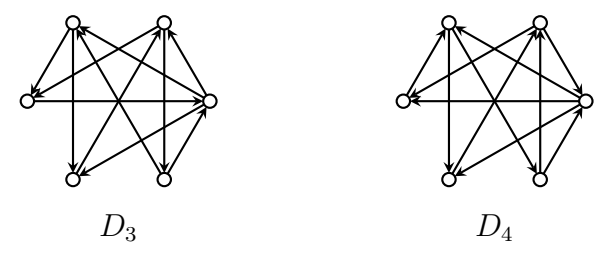
\begin{figure}
\begin{center}
\begin{tikzpicture}[auto,thick, scale=1.2]
    \tikzstyle{player}=[minimum size=5pt,inner sep=0pt,outer sep=0pt,draw,circle]
    \tikzstyle{source}=[minimum size=5pt,inner sep=0pt,outer sep=0pt,ball color=black, circle]
    \tikzstyle{arc}=[minimum size=5pt,inner sep=1pt,outer sep=1pt, font=\footnotesize]

     \draw (270:1.4cm) node (name) {$D_3$};
       \path (0:1cm)   node [player]  (x1) {};
    \path (60:1cm)   node [player]  (y1) {};
    \path (120:1cm)   node [player]  (y2) {};
    \path (180:1cm)    node [player]  (z1) {};
    \path (240:1cm)   node [player]  (z2) {};
      \path (300:1cm)   node [player]  (z3) {};

    \draw[black,thick,-stealth] (x1) - + (y1);
    \draw[black,thick,-stealth] (x1) - + (y2);
    \draw[black,thick,-stealth] (x1) - + (z2);
    \draw[black,thick,-stealth] (y1) - + (z1);
    \draw[black,thick,-stealth] (y1) - + (z3);
    \draw[black,thick,-stealth] (y2) - + (z1);
    \draw[black,thick,-stealth] (y2) - + (z2);
    \draw[black,thick,-stealth] (z1) - + (x1);
    \draw[black,thick,-stealth] (z2) - + (y1);
    \draw[black,thick,-stealth] (z3) - + (x1);
    \draw[black,thick,-stealth] (z3) - + (y2);
\end{tikzpicture}
\hspace{5em}
\begin{tikzpicture}[auto,thick, scale=1.2]
    \tikzstyle{player}=[minimum size=5pt,inner sep=0pt,outer sep=0pt,draw,circle]
    \tikzstyle{source}=[minimum size=5pt,inner sep=0pt,outer sep=0pt,ball color=black, circle]
    \tikzstyle{arc}=[minimum size=5pt,inner sep=1pt,outer sep=1pt, font=\footnotesize]

    \draw (270:1.4cm) node (name) {$D_4$};
    \path (0:1cm)   node [player]  (x1) {};
    \path (60:1cm)   node [player]  (y1) {};
    \path (120:1cm)   node [player]  (y2) {};
    \path (180:1cm)    node [player]  (z1) {};
    \path (240:1cm)   node [player]  (z2) {};
      \path (300:1cm)   node [player]  (z3) {};

    \draw[black,thick,-stealth] (x1) - + (y2);
    \draw[black,thick,-stealth] (x1) - + (z1);
    \draw[black,thick,-stealth] (x1) - + (z2);
    \draw[black,thick,-stealth] (y1) - + (x1);
    \draw[black,thick,-stealth] (y1) - + (z1);
    \draw[black,thick,-stealth] (y2) - + (z2);
    \draw[black,thick,-stealth] (y2) - + (z3);
    \draw[black,thick,-stealth] (z1) - + (y2);
    \draw[black,thick,-stealth] (z2) - + (y1);
    \draw[black,thick,-stealth] (z3) - + (y1);
    \draw[black,thick,-stealth] (z3) - + (x1);

\end{tikzpicture}
\caption{Two digraphs $D_3$ and $D_4$ which are orientations of $K_{3,2,1}$ and whose competition graphs are isomorphic to $G_3$ and $G_4$, respectively}\label{fig:n=6:tri-fre}
\end{center}
\end{figure}

    \begin{Lem}\label{thm:tri-fre} Let $G$ be a connected and triangle-free graph with $n$ vertices.
    Then $G$ is the competition graph of a tripartite tournament if and only if $G$ is isomorphic to a graph belonging to the following set:
    \[
     \begin{cases}\{ G_1, G_2 \} & \mbox{if $n=5$;} \\
     \{ G_3, G_4, P_6, C_6  \}  & \mbox{if $n=6$} \end{cases}
     \]
     where $G_i$ is the graph given in Figure~\ref{fig:tri-fre} for each $1\leq i \leq 4$.
    \end{Lem}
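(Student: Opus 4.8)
The plan is to prove the two implications separately, the forward (``only if'') direction being the substantial one. For ``if'' it will suffice to verify four figures and quote two lemmas: the digraphs $D_1,D_2$ of Figure~\ref{fig:n=5:tri-fre} are orientations of $K_{2,2,1}$ with $C(D_1)\cong G_1$ and $C(D_2)\cong G_2$; the digraphs $D_3,D_4$ of Figure~\ref{fig:n=6:tri-fre} are orientations of $K_{3,2,1}$ with $C(D_3)\cong G_3$ and $C(D_4)\cong G_4$; and $P_6,C_6$ are competition graphs of tripartite tournaments by Lemmas~\ref{thm13} and~\ref{thm12}. For ``only if'', suppose $G$ is connected and triangle-free of order $n$ with $G=C(T)$ for a tripartite tournament $T$. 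By Lemma~\ref{cor13-2} we have $n\in\{5,6\}$, and by Lemma~\ref{lem:sizes-of-3partite} (using its ``in particular'' clause, since $G$ is connected) the partite sets of $T$ have sizes $(2,2,1)$ if $n=5$, and $(2,2,2)$ or $(3,2,1)$ if $n=6$. Throughout I use that, $C(T)$ being triangle-free, every vertex of $T$ has indegree at most $2$, that the two in-neighbors of an indegree-$2$ vertex are adjacent in $C(T)$, and hence that $v\mapsto N^-_T(v)$ is a surjection of $\{v:d^-_T(v)=2\}$ onto $E(C(T))$; in particular $|E(C(T))|$ is at most the number of indegree-$2$ vertices of $T$.

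I would first dispose of the two easy regimes. If $n=5$ then $|A(T)|=8$, so $T$ has at most four indegree-$2$ vertices and $|E(C(T))|\le4$; connectedness on five vertices then forces $|E(C(T))|=4$, so $G$ is a tree. The only trees on five vertices are $P_5$, $K_{1,4}=G_1$ and $G_2$, and $P_5$ is excluded by Lemma~\ref{thm13}, so $G\in\{G_1,G_2\}$. If $n=6$ and $T$ is an orientation of $K_{2,2,2}$, then $|A(T)|=12$ forces every vertex of $T$ to have indegree, hence also outdegree, equal to $2$; so $v\mapsto N^-_T(v)$ surjects $V(T)$ onto $E(C(T))$ with $|V(T)|=6$ and $5\le|E(C(T))|\le6$. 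When $|E(C(T))|=6$ this map is a bijection, so $|N^+_T(v)|$ equals the number of edges of $C(T)$ incident with $v$, namely $\deg_{C(T)}(v)$; as this equals $d^+_T(v)=2$, the graph $C(T)$ is $2$-regular and connected, i.e.\ $C(T)=C_6$. When $|E(C(T))|=5$, exactly one edge $e_0$ of $C(T)$ is the in-neighborhood of two vertices, and the same count makes $C(T)$ a tree on six vertices in which both ends of $e_0$ have degree $1$, which is impossible. So $C(T)=C_6$ here.

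The remaining case, $n=6$ with $T$ an orientation of $K_{3,2,1}$, is the crux. By Lemma~\ref{lem:K_{1,2,3}}, $T$ has indegree sequence $(2,2,2,2,2,1)$ and the five indegree-$2$ vertices have pairwise distinct in-neighborhoods; hence $|E(C(T))|=5$, $C(T)$ is a tree on six vertices, and $v\mapsto N^-_T(v)$ restricts to a bijection of $V(T)\setminus\{u_0\}$ onto $E(C(T))$, where $u_0$ is the unique indegree-$1$ vertex. Put $w:=N^-_T(u_0)$; then $w\ne u_0$ and $w\to u_0$ in $T$. A short count gives $d^+_T(v)=\deg_{C(T)}(v)$ for $v\ne w$ and $d^+_T(w)=\deg_{C(T)}(w)+1$. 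Writing $V_1,V_2,V_3$ for the partite sets with $|V_1|=3$, $|V_2|=2$ and $V_3=\{b\}$, we have $d^+_T(v)+d^-_T(v)=6-|V_i|$ for $v\in V_i$, so (using $1\le d^-_T(v)\le2$) the part containing $v$ bounds $\deg_{C(T)}(v)$. Playing these bounds against one another --- and against the fact that no arc of $T$ joins two vertices of the same partite set, so that $w$ and $u_0$ lie in distinct parts --- I would show $\deg_{C(T)}(v)\le4$ for all $v$, with equality only if $b=u_0$, and then rule out the remaining ``degenerate'' trees on six vertices: the star $K_{1,5}$ (immediate), the double star $S_{2,2}$ with degree sequence $(3,3,1,1,1,1)$, and the tree with degree sequence $(4,2,1,1,1,1)$ that the case $b=u_0$ produces. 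This leaves degree sequence $(2,2,2,2,1,1)$ or $(3,2,2,1,1,1)$, realized among trees on six vertices only by $P_6$ and only by $G_3,G_4$ respectively, so $G\in\{P_6,G_3,G_4\}$. Combining the three cases yields $G\in\{G_1,G_2\}$ for $n=5$ and $G\in\{G_3,G_4,P_6,C_6\}$ for $n=6$, which together with the ``if'' part proves the lemma.

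I expect the main obstacle to be the elimination, in the last case, of the trees $S_{2,2}$ and the one with degree sequence $(4,2,1,1,1,1)$: unlike the rest, this is not pure counting but requires tracing how the in-neighborhoods (equivalently, the arcs) are forced --- fixing which vertices play the roles of $b$, $u_0$, $w$, and then determining the common out-neighbors of the five edges of $C(T)$ one at a time --- until two vertices of a common partite set are joined by an arc. The $S_{2,2}$ case is comparatively short, but the degree-$(4,2,1,1,1,1)$ case needs a chain of deductions. A secondary, routine matter is the small bookkeeping that there are exactly three trees on five vertices, exactly six trees on six vertices, and that exactly four of the latter have maximum degree at most $3$ (namely $P_6,G_3,G_4,S_{2,2}$).
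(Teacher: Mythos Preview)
Your plan is sound and tracks the paper's proof closely: reduce to $n\in\{5,6\}$ via Lemma~\ref{cor13-2}, pin down the partite sizes via Lemma~\ref{lem:sizes-of-3partite}, count edges to see that $C(T)$ is a tree (or $C_6$), and eliminate the inadmissible trees using Lemma~\ref{lem:K_{1,2,3}}. The paper organizes the $n=6$ case slightly differently: it first strips off paths and cycles via Lemmas~\ref{thm12} and~\ref{thm13} and then assumes $\Delta(C(T))\ge3$, which kills the $K_{2,2,2}$ subcase in one line (every vertex there has outdegree~$2$), whereas you argue directly that $C(T)=C_6$; and in the $K_{3,2,1}$ subcase the paper groups the bad trees as ``some vertex of degree $\ge4$'' and ``two vertices of degree $3$'', arriving at an isolated vertex and a triangle, respectively. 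Your degree identity $\deg_{C(T)}(v)=d_T^+(v)-[v=w]$ is a clean bookkeeping device the paper does not isolate; in fact it makes the $S_{2,2}$ case almost immediate (the second vertex of your $V_2$, being neither $w$ nor $u_0$ nor $b$, would be a leaf with $d^+=1$, impossible in a part of size~$2$), shorter than the paper's argument. One small correction to your expectations: the contradictions you reach will not be ``an arc inside a partite set'' --- for $S_{2,2}$ it is an outdegree too small for $V_2$, and for the $(4,2,1,1,1,1)$ tree it is two vertices of $V_1$ sharing the same in-neighborhood, violating Lemma~\ref{lem:K_{1,2,3}}(2).
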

        \begin{proof} Let $D$ be a tripartite tournament whose competition graph is $G$.
        Since $G$ is triangle-free,
        \begin{eqnarray}\label{equation:indegree2}
          d^-(v) \leq 2
        \end{eqnarray}
        for each $v \in V(D)$ and $n \in \{5,6\}$ by Lemma~\ref{cor13-2}.
        If $G$ is a path or a cycle, then, by Lemmas~\ref{thm12} and~\ref{thm13}, $G$ is isomorphic to $P_6$ or $C_6$.
        Now we suppose that $G$ is neither a path nor a cycle.
        Then, there exists a vertex of degree at least three in $C(D)$.

        {\it Case 1.} $n=5$. Then, by Lemma~\ref{lem:sizes-of-3partite}, $D$ is an orientation of $K_{2,2,1}$. Since $|A(D)|=8$ and $C(D)$ is connected, by \eqref{equation:indegree2}, there are exactly four edges in $C(D)$. Therefore $C(D)$ is isomorphic to $G_1$ or $G_2$ in Figure~\ref{fig:tri-fre}. Thus the ``only if" part is true in this case. To show the ``if" part, let $D_1$ and $D_2$ be the digraphs in Figure~\ref{fig:n=5:tri-fre} which are some orientations of $K_{2,2,1}$. It is easy to check that $C(D_1) \cong G_1$ and $C(D_2) \cong G_2$. Hence the ``if" part is true.

        {\it Case 2.} $n = 6$.
        Then, by Lemma~\ref{lem:sizes-of-3partite}, $D$ is an orientation of $K_{3,2,1}$ or $K_{2,2,2}$.
        Suppose that $D$ is an orientation of $K_{2,2,2}$.
        Since $\sum_{v \in V(D)} d^-(v)=12$, by \eqref{equation:indegree2}, $d^-(v)=2$ for each $v \in V(D)$ and so $d^+(v)=2$ for each $v \in V(D)$.
        Therefore every vertex has degree at most $2$ in $C(D)$, which is a contradiction to the assumption that $G$ is neither a path nor cycle.
        Thus $D$ is an orientation of $K_{3,2,1}$.
        By Lemma~\ref{lem:K_{1,2,3}}, there are exactly five edges in $C(D)$.
        Let $V_1$, $V_2$, and $V_3$ be the partite sets of $D$ with $|V_i|=i$ for each $i=1,2,$ and $3$.

        Suppose that there is a vertex $w$ of degree at least $4$ in $C(D)$.
        Then, by \eqref{equation:indegree2}, $w$ has outdegree at least $4$ in $D$.
        Thus $w$ belongs to $V_1$ or $V_2$.
        If $w$ belongs to $V_2$, then the indegree of $w$ is $0$, which contradicts Lemma~\ref{lem:K_{1,2,3}}(1).
        Therefore $w \in V_1$ and so
        $V_1=\{w\}$.
        Moreover, the outdegree of $w$ in $D$ is $4$ by Lemma~\ref{lem:K_{1,2,3}}(1).
        Then the indegree of each vertex in $D$ except $w$ is exactly $2$ by Lemma~\ref{lem:K_{1,2,3}}(2).
        If three out-neighbors of $w$ belong to the same partite set, then two of them share the same in-neighborhood, which contradicts Lemma~\ref{lem:K_{1,2,3}}(2).
        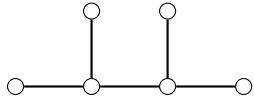
\begin{figure}
\begin{center}
\begin{tikzpicture}[x=1.0cm, y=1.0cm]
  \vertex (x1) at (0,0) [label=above:$$]{};
  \vertex (x2) at (1,0) [label=above:$$]{};
  \vertex (x3) at (-1,0) [label=above:$$]{};
  \vertex (x4) at (0,1) [label=above:$$]{};
    \vertex (x5) at (-1,1) [label=above:$$]{};
    \vertex (x6) at (-2,0) [label=above:$$]{};
   \path
   (x5) edge [-,thick] (x3)
   (x1) edge [-,thick] (x2)
   (x1) edge [-,thick] (x3)
   (x1) edge [-,thick] (x4)
(x6) edge [-,thick] (x3)
;
\end{tikzpicture}
 \end{center}
 \caption{A graph considered in the proof of Lemma~\ref{thm:tri-fre}}
\label{fig:example-k=3,2,1-competition}
\end{figure}
        Therefore two of the out-neighbors of $w$ belong to $V_2$ and the remaining out-neighbors belong to $V_3$.
        Since the indegree of each vertex in $D$ except $w$ is exactly $2$ by Lemma~\ref{lem:K_{1,2,3}}(2),
         each vertex in $V_2$ has exactly one in-neighbor in $V_3$.
        Thus there is one vertex in $V_3$ which is not an in-neighbor of any vertex in $V_2$.
        Then $w$ is the only its out-neighbor and so it is isolated in $C(D)$.
        Hence we have reached a contradiction and so the degree of each vertex of $C(D)$ is at most $3$.

        Now suppose that there are at least two vertices $x$ and $y$ of degree $3$ in $C(D)$.
        Then, since the number of edges in $C(D)$ is exactly $5$, $C(D)$ is isomorphic to the tree given in Figure~\ref{fig:example-k=3,2,1-competition}. By \eqref{equation:indegree2}, $d^+(x) \geq 3$ and $d^+(y) \geq 3$.
        If $x$ or $y$ belongs to $V_3$, then $d^-(x)=0$ or $d^-(y)=0$, which contradicts Lemma~\ref{lem:K_{1,2,3}}(1).
        Therefore $x$ and $y$ belong to $V_1$ or $V_2$.
        Suppose that $V_2=\{x,y\}$.
        Then, since $|V_1 \cup V_3|=4$, there are at least two vertices of indegree $2$ in $V_1 \cup V_3$ which have the same in-neighborhood $\{x,y\}$, which contradicts Lemma~\ref{lem:K_{1,2,3}}(2).
        Thus one of $x$ and $y$ belongs to $V_1$ and the other belongs to $V_2$.
        Without loss of generality, we may assume that $x\in V_1$ and $y \in V_2$.
        Then $V_1=\{x\}$ and, by Lemma~\ref{lem:K_{1,2,3}}(1), $d^-(y) \neq 0$, so $d^+(y)=3$.
        If $N^+(y)=V_3$, then $y$ is adjacent to at most two vertices in $C(D)$, which is a contradiction.
        Therefore $N^+(y) \cap V_3=\{z_1,z_2\}$ for some vertices $z_1$ and $z_2$ in $V_3$ and $(y,x) \in A(D)$.
        Since $C(D)$ is isomorphic to the tree given in Figure~\ref{fig:example-k=3,2,1-competition},
        $x$ and $y$ have a common out-neighbor in $V_3$.
        By Lemma~\ref{lem:K_{1,2,3}}(2), exactly one of $z_1$ and $z_2$ can be a common out-neighbor of $x$ and $y$ in $D$.
        By symmetry, we may assume that $z_1$ is a common out-neighbor of $x$ and $y$ and $(z_2, x) \in A(D)$.
        Then $N^+(x)=\{y',z_1,z_3\}$ for the vertices $y'$ other than $y$ in $V_2$ and $z_3$ other than $z_1$ and $z_2$ in $V_3$.
        Therefore, by \eqref{equation:indegree2}, $(z_1,y') \in A(D)$ and so, by \eqref{equation:indegree2} again, $N^+(y')=\{z_2,z_3\}$.
        Then $y'$ is adjacent to $y$ and $x$ in $C(D)$ and so $\{x,y,y'\}$ forms a triangle in $C(D)$, which is a contradiction.
        Thus we have shown that there is the only one vertex of degree $3$ in $C(D)$ and so $C(D)$ is isomorphic to $G_3$ or $G_4$ in Figure~\ref{fig:tri-fre}. Hence the ``only if" part is true. To show the ``if" part is true, let $D_3$ and $D_4$ be two digraphs given in Figure~\ref{fig:n=6:tri-fre} which are isomorphic to some
        orientations of $K_{3,2,1}$. It is easy to check that $C(D_3) \cong G_3$ and $C(D_4) \cong G_4$.
        Hence the ``if" part is true. \end{proof}
The following lemma is immediately true by the definition of the competition graph.
\begin{Lem} \label{lem:subdigraph}
Let $D$ be a digraph and $D'$ be a subdigraph of $D$.
Then the competition graph of $D'$ is a subgraph of the competition graph of $D$.
\end{Lem}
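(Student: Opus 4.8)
The plan is to unwind the definitions directly; there is no real obstacle here, since the statement is essentially a monotonicity observation about the competition-graph construction. Recall that $D'$ being a subdigraph of $D$ means $V(D') \subseteq V(D)$ and $A(D') \subseteq A(D)$. Consequently, for every vertex $x \in V(D')$ we have $N^+_{D'}(x) \subseteq N^+_D(x)$, because each arc $(x,w) \in A(D')$ is also an arc of $D$.

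First I would check the vertex condition: $V(C(D')) = V(D') \subseteq V(D) = V(C(D))$. Next I would check the edge condition. Let $uv \in E(C(D'))$. By definition of the competition graph, $u,v \in V(D')$, $u \neq v$, and there is a vertex $w \in N^+_{D'}(u) \cap N^+_{D'}(v)$. Since $N^+_{D'}(u) \subseteq N^+_D(u)$ and $N^+_{D'}(v) \subseteq N^+_D(v)$, this same $w$ lies in $N^+_D(u) \cap N^+_D(v)$, so this set is nonempty; together with $u,v \in V(D)$ and $u \neq v$, this gives $uv \in E(C(D))$. Hence $E(C(D')) \subseteq E(C(D))$.

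Combining the two containments, $C(D')$ is a subgraph of $C(D)$, which is what we wanted. The ``hard part,'' such as it is, amounts only to keeping the quantifiers straight: one must note that a single common out-neighbour witnessing an edge of $C(D')$ automatically witnesses the corresponding edge of $C(D)$, so no new common out-neighbours need to be produced. This is why the excerpt can legitimately say the lemma is ``immediately true by the definition of the competition graph,'' and I would keep the written proof to just a few lines along the lines above.
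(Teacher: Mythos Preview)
Your proof is correct and is exactly the kind of direct unwinding of the definition that the paper has in mind; the paper itself does not write out a proof but simply notes that the lemma is ``immediately true by the definition of the competition graph.'' Your argument makes this explicit and there is nothing to add.
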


\begin{Lem} \label{lem:connected-k-condition}
For a positive integer $k \geq 6$,
each competition graph of a $k$-partite tournament  contains a triangle.
\end{Lem}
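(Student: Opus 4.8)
The plan is to argue by contradiction, relying only on the arc-count bound of Lemma~\ref{lem:number-edges-triangle-free}. Suppose $D$ is an orientation of $K_{n_1,n_2,\dots,n_k}$ with $k \geq 6$, where we may assume $n_1 \geq n_2 \geq \cdots \geq n_k \geq 1$, and suppose toward a contradiction that $C(D)$ is triangle-free. Write $n = |V(D)| = n_1 + n_2 + \cdots + n_k$. Since the number of arcs of $D$ equals the number of edges of the underlying complete $k$-partite graph, $|A(D)| = \sum_{1 \leq i < j \leq k} n_i n_j = \tfrac12\bigl(n^2 - \sum_{i=1}^{k} n_i^2\bigr)$, and Lemma~\ref{lem:number-edges-triangle-free} gives $|A(D)| \leq 2n$.

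The key step is the crude estimate $\sum_{i=1}^{k} n_i^2 \leq n_1 \sum_{i=1}^{k} n_i = n_1 n$, which holds because $n_i \leq n_1$ for every $i$. Substituting this into the identity for $|A(D)|$ gives $|A(D)| \geq \tfrac12(n^2 - n_1 n) = \tfrac{n}{2}(n - n_1)$. Combining with $|A(D)| \leq 2n$ and cancelling the positive factor $n/2$ yields $n - n_1 \leq 4$, that is, $n_2 + n_3 + \cdots + n_k \leq 4$. On the other hand, since each $n_i \geq 1$ and there are $k-1 \geq 5$ terms in this sum, we get $n_2 + n_3 + \cdots + n_k \geq k - 1 \geq 5$, a contradiction. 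Hence $C(D)$ contains a triangle.

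I do not anticipate a real obstacle here: the only point requiring care is the inequality $\sum n_i^2 \leq n_1 n$ and checking that the resulting bound $n - n_1 \leq 4$ is already impossible once $k \geq 6$ (it is, with room to spare). An alternative route would be to invoke the classification in Lemma~\ref{lem:sizes-of-3partite} and try to reduce the $k$-partite case to the tripartite one by merging partite sets, but the direct counting argument above treats all $k \geq 6$ uniformly and avoids any case analysis, so that is the approach I would take.
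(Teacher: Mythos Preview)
Your proof is correct, but it takes a genuinely different route from the paper. The paper argues locally: it selects one vertex from each of six partite sets, observes that these six vertices induce a $6$-tournament $T$ with $15$ arcs, deduces by pigeonhole that some vertex of $T$ has indegree at least $3$ (hence $C(T)$ has a triangle), and then invokes Lemma~\ref{lem:subdigraph} to push the triangle up into $C(D)$. Your argument is instead a global arc-count: you apply Lemma~\ref{lem:number-edges-triangle-free} to the whole of $D$, combine it with the elementary bound $\sum_i n_i^2 \le n_1 n$, and derive the impossible inequality $n_2+\cdots+n_k \le 4$. The paper's approach has the virtue of being entirely self-contained (it does not need Lemma~\ref{lem:number-edges-triangle-free}) and of actually locating a vertex whose in-neighbours form the triangle; your approach has the virtue of avoiding any auxiliary subdigraph or Lemma~\ref{lem:subdigraph}, handling all $k\ge 6$ by a single inequality, and fitting naturally with the arc-counting style used elsewhere in the paper (e.g.\ Lemmas~\ref{lem:sizes-of-3partite} and~\ref{lem:sizes-of-2partite}).
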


\begin{proof}
Suppose that $D$ is a $k$-partite tournament for a positive integer $k \geq 6$.
Let $V_1, V_2, \ldots, V_k$ be the partite sets of $D$.
Then we take a vertex $v_i$ in $V_i$ for each $1\leq i \leq 6$.
Then $\{v_1,\ldots,v_6\}$ forms a $6$-tournament $T$.
Since $T$ has $15$ arcs,
there exists a vertex in $T$ whose indegree is at least $3$.
Therefore $C(T)$ has a triangle and so, by Lemma~\ref{lem:subdigraph},
$C(D)$ contains a triangle.
\end{proof}

\begin{Prop} \label{prop:minimum-number-of-edges}
(Fisher~\cite{fisher1998domination}) For $n  \geq 2$, the minimum possible number of edges in the competition graph of an $n$-tournament is $ \binom{n}{2} - n$.
\end{Prop}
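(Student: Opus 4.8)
Since $C(T)$ is a simple graph on $n$ vertices, $|E(C(T))|=\binom{n}{2}-p(T)$, where $p(T)$ is the number of \emph{non-edges} of $C(T)$, i.e.\ the number of pairs $\{u,v\}$ of distinct vertices of $T$ with $N^+(u)\cap N^+(v)=\emptyset$. So the plan is to show that $\max_T p(T)=n$, the maximum being over all $n$-tournaments $T$: first the bound $p(T)\le n$ for every $T$, then a tournament attaining it for each $n\ge 3$. (For $n=2$, $C(T)$ is edgeless, consistent with $\binom{2}{2}-2\le 0$; so I assume $n\ge 3$.)

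For the upper bound, given a vertex $u$, call $w$ \emph{privileged for} $u$ if $(u,w)\in A(T)$ and $\{u,w\}$ is a non-edge of $C(T)$. The key claim is that each vertex has at most one privileged vertex. Indeed, if $w_1\neq w_2$ were both privileged for $u$, then $w_1,w_2\in N^+(u)$, so $N^+(u)\cap N^+(w_1)=\emptyset$ forces $w_2\notin N^+(w_1)$, hence $(w_2,w_1)\in A(T)$; symmetrically $(w_1,w_2)\in A(T)$, a contradiction. Each non-edge $\{u,w\}$ of $C(T)$ contributes a privileged vertex to exactly one endpoint (the tail of the $T$-arc joining $u$ and $w$), so $p(T)=\sum_{u\in V(T)}|\{w: w\text{ privileged for }u\}|\le n$, giving $|E(C(T))|\ge\binom{n}{2}-n$.

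For sharpness I would exhibit, for each $n\ge 3$, a tournament with exactly $n$ non-edges. For odd $n=2m+1$, take the rotational tournament $R_n$ on $\mathbb{Z}_n$ with $i\to j$ iff $j-i\in\{1,\dots,m\}\pmod n$, so $N^+(i)=\{i+1,\dots,i+m\}$; comparing blocks of consecutive residues shows $N^+(i)\cap N^+(j)=\emptyset$ exactly when $j-i\equiv\pm m\pmod n$, so the non-edges are precisely the $n$ pairs $\{i,i+m\}$. For even $n=2m+2$, adjoin to $R_{n-1}$ on $\mathbb{Z}_{n-1}$ a new vertex $x$ with $N^-(x)=\{0,\dots,m-1\}$ and $N^+(x)=\{m,\dots,2m\}$: out-neighbourhoods only grow, so no non-edge is created within $\mathbb{Z}_{n-1}$; each old non-edge $\{i,i+m\}$ survives because $\{0,\dots,m-1\}$ meets $\{i,i+m\}$ in at most one point, so $x$ is not a common out-neighbour of $i$ and $i+m$; and a short computation shows $\{x,2m\}$ is the only non-edge meeting $x$. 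Hence this tournament has $(n-1)+1=n$ non-edges. The upper bound and the odd case are quick; the point needing care is the even case, where one must check that adjoining $x$ keeps all $n-1$ old non-edges and adds exactly one, which boils down to the two block computations indicated above.
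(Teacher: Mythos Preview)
The paper does not supply its own proof of this proposition: it is quoted as a result of Fisher et al.\ and used as a black box. So there is nothing in the paper to compare your argument against.

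That said, your proof is correct. The upper bound via ``privileged'' out-neighbours is a clean way to see $p(T)\le n$: each non-edge of $C(T)$ is charged to the tail of the corresponding tournament arc, and no vertex can be charged twice, since two out-neighbours $w_1,w_2$ of $u$ that both fail to share an out-neighbour with $u$ would have to beat each other in both directions. Your sharpness constructions also check out. For odd $n=2m+1$ the rotational tournament has exactly the $n$ non-edges $\{i,i+m\}$, since two length-$m$ blocks of consecutive residues in $\mathbb{Z}_{2m+1}$ are disjoint precisely when they partition the complement of a single element. For even $n=2m+2$, your extension of $R_{n-1}$ by a vertex $x$ with $N^-(x)=\{0,\dots,m-1\}$ preserves all $n-1$ old non-edges (as $i$ and $i+m$ cannot both lie in a block of $m$ consecutive residues mod $2m+1$) and creates exactly the one new non-edge $\{x,2m\}$, because $\{j+1,\dots,j+m\}$ equals the complement $\{0,\dots,m-1\}$ of $N^+(x)$ only when $j=2m$. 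The one quibble is the boundary case $n=2$, where $\binom{2}{2}-2=-1$ and the statement as written is literally off by one; you flag this, and for $n\ge 3$ everything is fine.
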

An $n$-tournament is {\it regular} if $n$ is odd and every vertex has outdegree $(n -1)/2$.
Fisher~\cite{fisher1995domination} and Cho~\cite{cho2002domination} showed that a path on four or more vertices is not
the domination graph of a tournament and that the domination graph of a regular $n$-tournament $(n \geq 3)$ is either an odd cycle or a forest of two or more paths, respectively.
Here, the {\it domination graph } of a tournament $T$ is the complement of the competition graph of the tournament formed by reversing the arcs of $T$.
Accordingly, their results can be restated as follows.

\begin{Prop}
\label{prop:no-path-on=four-tournament}
(Fisher~\cite{fisher1995domination})
A path on four or more vertices is not
the complement of the competition graph of a tournament.
\end{Prop}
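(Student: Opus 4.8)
The plan is to prove the equivalent statement that no tournament $T$ on $n\geq 4$ vertices has $\overline{C(T)}=P_n$; this is the same assertion, since the domination graph of a tournament is by definition $\overline{C(T)}$ for the tournament obtained by reversing all arcs, and arc‑reversal is an involution on tournaments. Unwinding the definition, for distinct vertices $u,v$ of a tournament $T$ one has $uv\in E(\overline{C(T)})$ precisely when $N^+_T(u)\cap N^+_T(v)=\emptyset$, i.e. $u$ and $v$ have no common ``prey''. I would argue by taking a counterexample $T$ with $\overline{C(T)}=P_n$, $n\geq 4$, and with $n$ as small as possible, and writing the path as $v_1v_2\cdots v_n$; thus $N^+_T(v_i)\cap N^+_T(v_{i+1})=\emptyset$ for every $i$, while for $|i-j|\geq 2$ the vertices $v_i$ and $v_j$ have a common prey.

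First I would settle $n=4$ directly. Since $v_1v_3\notin E(\overline{C(T)})$ there is a vertex $w$ with $v_1\to w$ and $v_3\to w$; as $w\notin\{v_1,v_3\}$, $w\in\{v_2,v_4\}$. Likewise $v_2v_4\notin E(\overline{C(T)})$ gives $w'\in\{v_1,v_3\}$ with $v_2\to w'$ and $v_4\to w'$. Whichever of $v_2,v_4$ equals $w$, the resulting arcs out of $v_1$ and $v_3$ make both candidates for $w'$ impossible, a contradiction. For $n\geq 5$ I would first reduce using vertex deletion. Deleting the endpoint $v_1$, any pair inside $\{v_2,\dots,v_n\}$ with no common prey in $T$ still has none in $T-v_1$, so $\overline{C(T-v_1)}$ contains the path $v_2\cdots v_n$; and any pair that becomes adjacent in $\overline{C(T-v_1)}$ must have had $v_1$ as its unique common prey, so both its vertices lie in $N^-_T(v_1)$. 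Hence $\overline{C(T-v_1)}$ is $P_{n-1}$ together with at most $\binom{d^-_T(v_1)}{2}$ additional edges, all inside $N^-_T(v_1)$. In particular, if $d^-_T(v_1)\leq 1$ then $\overline{C(T-v_1)}=P_{n-1}$, contradicting the minimality of $T$; the same holds for $v_n$. So it remains to treat the case $d^-_T(v_1)\geq 2$ and $d^-_T(v_n)\geq 2$.

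This last case is the heart of the matter and the step I expect to be the main obstacle. The degree‑$1$ endpoints give leverage: from $v_1v_2\in E(\overline{C(T)})$, every out‑neighbour of $v_1$ other than $v_2$ beats $v_2$ (and dually at $v_n$). For each $j\geq 3$, a common prey $w_j$ of $v_1$ and $v_j$ satisfies $v_1\to w_j$ and $v_j\to w_j$, and pushing $w_j$ through the path‑edges $v_1v_2$, $v_{j-1}v_j$, $v_jv_{j+1}$ shows that $w_j$ either belongs to $\{v_2,v_{j-1},v_{j+1}\}$ or beats all three of $v_2,v_{j-1},v_{j+1}$; in either event the position of $w_j$ along the path is sharply restricted. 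Combining this information (and the symmetric information obtained from $v_n$) with the disjointness relations $N^+_T(v_i)\cap N^+_T(v_{i+1})=\emptyset$, one should be forced into a vertex of $\overline{C(T)}$ of degree at least $3$, or into a triangle in $\overline{C(T)}$, either of which is impossible since $\overline{C(T)}=P_n$. The genuine difficulty here is that for $n\geq 5$ each non‑edge of $P_n$ admits several candidate common preys, so the bookkeeping of forced arcs must be organized carefully, and the remaining degenerate configurations (such as $d^-_T(v_1)=2$, where deletion adds only a single chord to $P_{n-1}$) must be dispatched by a short separate argument.
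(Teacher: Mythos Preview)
The paper does not prove this proposition; it merely restates a result of Fisher et al.\ on domination graphs in the language of competition graphs and cites \cite{fisher1995domination}. So there is no ``paper's own proof'' to compare against.

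On your proposal itself: the $n=4$ case is correct and clean, and your minimality reduction (delete an endpoint of in-degree at most~$1$ to get $\overline{C(T-v_1)}=P_{n-1}$) is valid. However, the main case $d^-_T(v_1)\geq 2$ and $d^-_T(v_n)\geq 2$ is not proved---you yourself call it ``the main obstacle'' and only outline a direction (``one should be forced into \ldots''). That is a genuine gap: the disjointness constraints $N^+(v_i)\cap N^+(v_{i+1})=\emptyset$ together with the existence of common preys for all non-adjacent pairs do not obviously collapse without substantial bookkeeping, and you have not shown that they do. One small imprecision: your claim that $w_j$ ``either belongs to $\{v_2,v_{j-1},v_{j+1}\}$ or beats all three'' is a weakening of what you actually derive; the correct statement is that for each $x$ in that set, either $w_j=x$ or $w_j\to x$ (so e.g.\ if $w_j=v_2$ you still get $v_2\to v_{j-1}$ and $v_2\to v_{j+1}$, information you are discarding).

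If you want to complete a self-contained proof, you will need either a full case analysis of the remaining situation or a different structural argument (the original proof uses properties of dominating pairs in tournaments). As written, the proposal is a plausible plan but not a proof.
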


\begin{Prop} \label{prop:domination-regular-tournament}
(Cho~\cite{cho2002domination}) If $T$ is a regular $n$-tournament $(n \geq 3)$, then the complement of the competition graph of $T$ is either an odd cycle or a forest of two or more paths.
\end{Prop}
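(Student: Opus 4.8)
This statement is a restatement of classical facts about domination graphs of tournaments, but I would prove it directly. Throughout, write $N^+(u)$ and $N^-(u)$ for the out- and in-neighbourhoods of $u$ in $T$, and recall that for distinct $u,v$ the pair $uv$ is an edge of $\overline{C(T)}$ exactly when $N^+(u)\cap N^+(v)=\emptyset$. Since $T$ is regular, $|N^+(u)|=(n-1)/2$ for every $u$. The first step is a degree bound. Suppose $uv\in E(\overline{C(T)})$ with $u\to v$ in $T$. Then $N^+(u)$ and $N^+(v)$ are disjoint, neither contains $u$, and their sizes sum to $n-1=|V(T)\setminus\{u\}|$; hence $N^+(u)\sqcup N^+(v)=V(T)\setminus\{u\}$, which, using $V(T)\setminus\{u\}=N^+(u)\sqcup N^-(u)$, gives $N^+(v)=N^-(u)$. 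In particular $v$ loses to every vertex of $N^+(u)\setminus\{v\}$, so $v$ is the unique vertex of $N^+(u)$ dominated by all the others in $N^+(u)$. Symmetrically, if $uv\in E(\overline{C(T)})$ with $v\to u$, then $N^+(v)=\{u\}\cup(N^-(u)\setminus\{v\})$, so $v$ is the unique vertex of $N^-(u)$ dominating all the others in $N^-(u)$. Therefore every vertex has at most two neighbours in $\overline{C(T)}$, and $\overline{C(T)}$ is a disjoint union of paths (including isolated vertices) and cycles.

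Next I would analyse a cycle component $O=v_0v_1\cdots v_{m-1}v_0$. Each $v_i$ has degree $2$ in $O$, and by the previous paragraph one of its two incident edges has $v_i$ dominating its neighbour and the other has $v_i$ dominated by its neighbour (otherwise uniqueness would force two of the neighbours to coincide). Hence the $T$-orientations of the edges of $O$ give every vertex exactly one in-arc and one out-arc among these edges, so they form a directed cycle. We may thus assume $v_0\to v_1\to\cdots\to v_{m-1}\to v_0$ in $T$, and then the edge $v_iv_{i+1}$ yields $N^+(v_{i+1})=N^-(v_i)$ for all $i$ (indices mod $m$). Combining this with $V(T)\setminus\{v_{i+1}\}=N^+(v_{i+1})\sqcup N^-(v_{i+1})$ gives $N^+(v_{i+2})=\{v_i\}\cup\bigl(N^+(v_i)\setminus\{v_{i+1}\}\bigr)$. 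Iterating this identity around the cycle, if $m$ were even we would obtain $N^+(v_0)=\bigl(\{v_0,v_2,\dots,v_{m-2}\}\cup N^+(v_0)\bigr)\setminus\{v_1,v_3,\dots,v_{m-1}\}$, whose right-hand side contains $v_0$ while the left-hand side does not---a contradiction. Hence every cycle of $\overline{C(T)}$ has odd length.

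To see that a cycle component exhausts $\overline{C(T)}$, suppose $w\notin V(O)$. From $N^+(v_{i+1})=N^-(v_i)$ and the absence of $2$-cycles: if $v_i\to w$ then $w\notin N^-(v_i)=N^+(v_{i+1})$ and $w\ne v_{i+1}$, so $w\to v_{i+1}$; and if $w\to v_{i+1}$ then $w\in N^-(v_{i+1})=N^+(v_{i+2})$, so $v_{i+2}\to w$. Thus the orientation of the edge between $w$ and $v_i$ alternates as $i$ increases by $1$; going once around $O$, which has odd length, reverses this orientation at $v_{i+m}=v_i$, a contradiction. Hence $V(O)=V(T)$, so $\overline{C(T)}$ has at most one cycle and, when it has one, no other component; that is, $\overline{C(T)}$ is an odd cycle.

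Finally, if $\overline{C(T)}$ is acyclic it is a disjoint union of paths, and I claim it is not a single spanning path. Indeed, such a path would be $P_n$; if $n\ge 4$ this contradicts Proposition~\ref{prop:no-path-on=four-tournament}, and if $n=3$ the unique regular tournament is the directed triangle, for which $C(T)$ is edgeless and $\overline{C(T)}=K_3$ is not a path. Hence an acyclic $\overline{C(T)}$ is a forest of two or more paths, and together with the cycle case this gives the statement. The step I expect to be the main obstacle is the spanning argument of the third paragraph: the bookkeeping in chasing the neighbourhood identities around the odd cycle must be carried out carefully, although the exact cardinality equalities forced by regularity make it routine.
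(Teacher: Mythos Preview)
The paper does not prove this proposition; it is quoted from Cho, Kim, and Lundgren~\cite{cho2002domination} as background and used without proof. So there is no ``paper's proof'' to compare against.

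Your argument is correct and self-contained, and it reproduces the classical line of reasoning for domination graphs of regular tournaments. The degree bound (each vertex has at most one $\overline{C(T)}$-neighbour in $N^+$ and at most one in $N^-$) is exactly the key structural fact, and your derivation of it from the cardinality identity $|N^+(u)|+|N^+(v)|=n-1$ is clean. The parity argument for cycles is right: the recurrence $N^+(v_{i+2})=\{v_i\}\cup(N^+(v_i)\setminus\{v_{i+1}\})$ does iterate as you claim, since the even-indexed vertices you add are never among the odd-indexed vertices you remove (because $m$ is even), and the conclusion $v_0\in N^+(v_0)$ gives the contradiction. The spanning argument is also fine; in fact your two implications combine to show directly that the orientation of the arc between $w$ and $v_i$ strictly alternates with $i$, which is impossible around an odd cycle.

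One small remark: in the final paragraph you invoke Proposition~\ref{prop:no-path-on=four-tournament} (Fisher's result) to rule out a spanning path for $n\ge4$. That is legitimate here since the paper states it independently, but you could also rule it out internally: if $\overline{C(T)}=P_n=v_0v_1\cdots v_{n-1}$, then by your first paragraph the $T$-orientations along $P_n$ alternate direction, so they form a directed path, say $v_0\to v_1\to\cdots\to v_{n-1}$; the same recurrence $N^+(v_{i+2})=\{v_i\}\cup(N^+(v_i)\setminus\{v_{i+1}\})$ then forces $v_0\to v_j$ for every $j$, so $v_0$ has outdegree $n-1$, contradicting regularity when $n\ge3$. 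This would make the proof entirely independent of the other cited result.
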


\begin{Lem} \label{lem:triangle-free-5-partite-tournament}
If the competition graph $C(D)$ of a $5$-partite tournament $D$ is triangle-free, then $D$ is a regualr $5$-tournament and $C(D)$ is isomorphic to a cycle of length $5$.
\end{Lem}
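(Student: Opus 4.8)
The plan is to argue in three stages: first use an arc count to force $D$ to be an ordinary $5$-tournament, then deduce that it is regular, and finally identify $C(D)$ via Cho's description of the domination graph of a regular tournament (Proposition~\ref{prop:domination-regular-tournament}).

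\emph{Stage 1: $D$ is a $5$-tournament.} Since $C(D)$ is triangle-free, every vertex of $D$ has indegree at most $2$, so $|A(D)| = \sum_{v \in V(D)} d^-(v) \le 2|V(D)|$. Let $n_1,\dots,n_5$ be the sizes of the partite sets of $D$ and set $n = n_1 + \dots + n_5$, so that $n \ge 5$ and $|A(D)| = \sum_{i<j} n_i n_j = \tfrac12\bigl(n^2 - \sum_{i=1}^5 n_i^2\bigr)$. The displayed inequality then becomes $n^2 - \sum_{i=1}^5 n_i^2 \le 4n$, and feeding in the Cauchy--Schwarz bound $\sum_{i=1}^5 n_i^2 \ge n^2/5$ gives $\tfrac45 n^2 \le 4n$, hence $n \le 5$. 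Therefore $n = 5$, and equality in Cauchy--Schwarz forces $n_1 = \dots = n_5 = 1$; that is, $D$ is a $5$-tournament.

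\emph{Stage 2: $D$ is regular.} Now $|A(D)| = \binom{5}{2} = 10$; since the five indegrees are each at most $2$ and sum to $10$, every vertex of $D$ has indegree exactly $2$, hence outdegree exactly $2$. As $5$ is odd, $D$ is a regular $5$-tournament.

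\emph{Stage 3: $C(D) \cong C_5$.} By Lemma~\ref{lem:number-edges-triangle-free}, $|E(C(D))| \le |A(D)|/2 = 5$ (indeed equality holds, by Proposition~\ref{prop:minimum-number-of-edges}), so the complement $\overline{C(D)}$, a graph on $5$ vertices, has at least $\binom{5}{2} - 5 = 5$ edges. By Proposition~\ref{prop:domination-regular-tournament}, $\overline{C(D)}$ is either an odd cycle or a forest of two or more paths. A forest of two or more paths on $5$ vertices has at most $3$ edges, so that case is impossible; and the only odd cycle on at most $5$ vertices with at least $5$ edges is $C_5$. Hence $\overline{C(D)} \cong C_5$, and since $\overline{C_5} \cong C_5$, we conclude $C(D) \cong C_5$, as desired.

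I expect Stage 1 --- extracting from the arc count that no partite set can contain more than one vertex --- to be the only step needing genuine care, as it hinges on the convexity estimate $\sum_{i=1}^5 n_i^2 \ge n^2/5$; Stages 2 and 3 are then short, with the identification of $C_5$ resting entirely on Cho's theorem together with the edge bound of Lemma~\ref{lem:number-edges-triangle-free}.
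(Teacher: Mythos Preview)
Your Stage~1 contains a genuine error: the Cauchy--Schwarz bound $\sum_{i=1}^5 n_i^2 \ge n^2/5$ points the wrong way for your purposes. From it you get $n^2 - \sum n_i^2 \le n^2 - n^2/5 = \tfrac{4}{5}n^2$, which is a \emph{second upper bound} on the same quantity $n^2 - \sum n_i^2$; together with $n^2 - \sum n_i^2 \le 4n$ this yields nothing. To conclude $\tfrac{4}{5}n^2 \le 4n$ you would need $\sum n_i^2 \le n^2/5$, i.e.\ the reverse inequality. What the argument actually requires is a \emph{lower} bound on $|A(D)|$, equivalently an upper bound on $\sum n_i^2$. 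One clean fix: since each $n_i \ge 1$, convexity gives $\sum n_i^2 \le (n-4)^2 + 4\cdot 1^2$, whence $2|A(D)| = n^2 - \sum n_i^2 \ge 8n - 20$; combining $4n - 10 \le |A(D)| \le 2n$ forces $n \le 5$. The paper takes a different route altogether: if $n \ge 6$ it selects six vertices meeting every part, obtaining a sub-$5$-partite tournament on $6$ vertices with $14$ arcs, so some vertex has indegree at least $3$, contradicting triangle-freeness.

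Your Stages~2 and~3 are correct. Stage~3 is a genuinely different (and arguably more elegant) conclusion than the paper's: the paper simply notes that the regular $5$-tournament is unique up to isomorphism and checks directly that its competition graph is $C_5$, whereas you invoke Proposition~\ref{prop:domination-regular-tournament} on $\overline{C(D)}$ and eliminate the forest case by an edge count. Both work; yours avoids the (easy but unproved-in-the-paper) uniqueness claim at the cost of citing a stronger external result.
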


\begin{proof}
Suppose that $D$ is a $5$-partite tournament whose competition graph is triangle-free.
Let $V_1,\ldots,V_5$ be the partite sets of $D$.
To show $|V(D)|=5$ by contrary, suppose
$|V(D)|\geq 6$.
Then there exists a partite set whose size is at least $2$.
  Without loss of generality, we may assume $|V_1| \geq 2$. We take $v_i$ in $V_i$ for each $1\leq i \leq 5$.
 Then we may take a vertex $v_{1}'$ distinct from $v_1$ in $V_1$ so that the subdigraph $T$ induced by $\{v_1,v'_1,v_2,\ldots,v_5\}$ is a $5$-partite tournament.
  Since $T$ has $14$ arcs and $|V(T)|=6$,
  there exists a vertex of indegree at least $3$ in $T$, which is a contradiction.
  Thus $V_i=\{v_i\}$ for each $1\leq i \leq 5$.
  Then $D$ is a tournament.

  Since $|V(D)|=5$,
   $|E(C(D))| \geq 5$ by Proposition~\ref{prop:minimum-number-of-edges} and so, by Lemma~\ref{lem:number-edges-triangle-free}, we have $|E(C(D))|=5$.
   Then, since $|V(D)|=5$,
   each vertex has indegree exactly $2$
   and so each vertex has outdegree $2$.
   Thus $D$ is a regular $5$-tournament.
   Since it is easy to check that a regular $5$-tournament is unique up to isomorphism as shown in Figure~\ref{fig:n=5:tri-fre-1},
   $C(D)$ is isomorphic to a cycle of length $5$.
    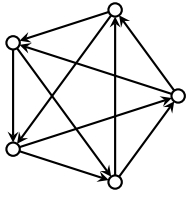
\begin{figure}
\begin{center}
\begin{tikzpicture}[auto,thick, scale=1.2]
    \tikzstyle{player}=[minimum size=5pt,inner sep=0pt,outer sep=0pt,draw,circle]
    \tikzstyle{source}=[minimum size=5pt,inner sep=0pt,outer sep=0pt,ball color=black, circle]
    \tikzstyle{arc}=[minimum size=5pt,inner sep=1pt,outer sep=1pt, font=\footnotesize]
   \path (0:1cm)   node [player]  (v1) {};
    \path (72:1cm)   node [player]  (v2) {};
    \path (144:1cm)   node [player]  (v3) {};
    \path (216:1cm)    node [player]  (v4) {};
    \path (288:1cm)   node [player]  (v5) {};

    \draw[black,thick,-stealth] (v1) - + (v2);
    \draw[black,thick,-stealth] (v2) - + (v3);
    \draw[black,thick,-stealth] (v3) - + (v4);
    \draw[black,thick,-stealth] (v4) - + (v5);
    \draw[black,thick,-stealth] (v5) - + (v1);
    \draw[black,thick,-stealth] (v1) - + (v3);
    \draw[black,thick,-stealth] (v2) - + (v4);
    \draw[black,thick,-stealth] (v3) - + (v5);
    \draw[black,thick,-stealth] (v4) - + (v1);
    \draw[black,thick,-stealth] (v5) - + (v2);
\end{tikzpicture}
\caption{A regular $5$-tournament}\label{fig:n=5:tri-fre-1}
\end{center}
\end{figure}
\end{proof}

\begin{Lem} \label{lem:same-out-in-neighbor-same-partite}
Let $D$ be a multipartite tournament whose competition graph is triangle-free.
If two vertices $u$ and $v$ with outdegree at least one have the same out-neighborhood or in-neighborhood,
 then $u$ and $v$ belong to the same partite set of $D$ and form a component in $C(D)$.
\end{Lem}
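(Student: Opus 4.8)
The plan is to reduce both hypotheses to the single situation in which $u$ and $v$ have a common, nonempty out-neighborhood, and then to extract a forbidden triangle.

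First I would carry out the reduction. If $N^+(u)=N^+(v)$, there is nothing to do: this common set, call it $S$, is nonempty because $d^+(u)\ge 1$. If instead $N^-(u)=N^-(v)$, I would first argue that $u$ and $v$ lie in a common partite set: otherwise, since $D$ is an orientation of a complete multipartite graph, there is an arc between $u$ and $v$, say $(u,v)\in A(D)$, and then $u\in N^-(v)=N^-(u)$, a loop, which is impossible. Once $u$ and $v$ lie in the same partite set $V_i$, every vertex $x\notin V_i$ is joined to each of $u$ and $v$ by exactly one arc, so $x\in N^+(u)$ iff $x\notin N^-(u)=N^-(v)$ iff $x\in N^+(v)$; since neither $N^+(u)$ nor $N^+(v)$ meets $V_i$, this yields $N^+(u)=N^+(v)$, which is again nonempty by the outdegree hypothesis. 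Thus in both cases we may set $S:=N^+(u)=N^+(v)\neq\emptyset$.

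Next I would settle the partite-set claim in general and then identify the component. If $u$ and $v$ belonged to different partite sets there would be an arc between them, say $(u,v)\in A(D)$, forcing $v\in N^+(u)=S=N^+(v)$, again a loop; hence $u$ and $v$ lie in the same partite set. Since $S\neq\emptyset$, any $s\in S$ is a common out-neighbor of $u$ and $v$, so $uv\in E(C(D))$. Now suppose some vertex $w\notin\{u,v\}$ were adjacent in $C(D)$ to $u$ (the argument for $v$ is symmetric): then $u$ and $w$ have a common out-neighbor $z$, and $z\in N^+(u)=S=N^+(v)$, so $z$ is also a common out-neighbor of $v$ and $w$, whence $vw\in E(C(D))$ as well. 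Then $\{u,v,w\}$ induces a triangle in $C(D)$, contradicting triangle-freeness. Therefore no vertex outside $\{u,v\}$ is adjacent to $u$ or to $v$, so $\{u,v\}$ together with the edge $uv$ is a connected component of $C(D)$.

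The crux is the observation that the ``same in-neighborhood'' hypothesis collapses to the ``same out-neighborhood'' hypothesis once $u$ and $v$ are known to share a partite set; after that the proof is just a short triangle count, so I do not anticipate a genuine obstacle.
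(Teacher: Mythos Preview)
Your proof is correct and follows essentially the same route as the paper: first use the existence of an arc between vertices in distinct partite sets to force a loop and conclude $u,v$ share a partite set, then observe that in a multipartite tournament this makes the two hypotheses equivalent and gives $N^+(u)=N^+(v)\neq\emptyset$. The only cosmetic difference is in the component step: the paper notes that triangle-freeness forces $d^-(z)\le 2$ for every $z\in N^+(u)$, so $N^-(z)=\{u,v\}$ exactly, which immediately isolates $\{u,v\}$; you instead suppose a third neighbor $w$ and explicitly build the triangle $uvw$. These are the same observation phrased two ways.
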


\begin{proof}
Suppose, to the contrary, that there are two vertices $u$ and $v$ with outdegree at least one such that $N^+(u)=N^+(v)$ or $N^-(u)=N^-(v)$ but $u$ and $v$ belong to the distinct partite sets.
Then $(u,v) \in A(D)$ or $(v,u) \in A(D)$.
Without loss of generality,
we may assume
$(u,v) \in A(D)$.
Then $u \in N^-(v)$ but $u \notin N^-(u)$.
Therefore $N^-(u) \neq N^-(v)$ and $N^+(u) \neq N^+(v)$, which is a contradiction.
Thus $u$ and $v$ belong to the same partite set.
Then, since $D$ is a multipartite tournament, $N^+(u)=N^+(v)$ if and only if $N^-(u)=N^-(v)$.
Therefore $N^+(u)=N^+(v)\neq \emptyset$ by the hypothesis.
Since $C(D)$ is triangle-free, $u$ and $v$ are the only in-neighbors of each vertex in $N^+(u)$ and so they form a component in $C(D)$.
\end{proof}

\begin{Lem}  \label{lem:condition-of-sizes-4-partite sets}
Let $n_1,n_2,n_3,n_4$ be positive integers such that $n_1 \geq \cdots \geq n_4$.
If $D$ is an orientation of $K_{n_1,n_2,n_3,n_4}$ whose competition graph is triangle-free, then $n_1 \leq 2$ and
$n_2=n_3=n_4=1$.
\end{Lem}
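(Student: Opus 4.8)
The plan is to derive strong numerical constraints on $(n_1,\dots,n_4)$ from the triangle-free edge bound, reduce to finitely many candidates, and then kill the single surviving exceptional case $K_{3,1,1,1}$ by a short structural argument.

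First I would set $S=\sum_{i=1}^4 n_i=|V(D)|$ and note $|A(D)|=\sum_{1\le i<j\le 4} n_in_j$, so that Lemma~\ref{lem:number-edges-triangle-free} gives $\sum_{i<j} n_in_j\le 2S$. The key elementary fact is that $n_in_j\ge n_i+n_j$ whenever $n_i,n_j\ge 2$ (equivalently $(n_i-1)(n_j-1)\ge 1$). Using it I claim at most two of $n_1,\dots,n_4$ can exceed $1$: if $n_1,n_2,n_3\ge 2$ (these are automatically the three largest), then summing the fact over the pairs inside $\{n_1,n_2,n_3\}$ gives $n_1n_2+n_1n_3+n_2n_3\ge 2(n_1+n_2+n_3)$, while $n_1n_4+n_2n_4+n_3n_4=n_4(n_1+n_2+n_3)\ge n_1+n_2+n_3$; adding these, $\sum_{i<j} n_in_j\ge 3(n_1+n_2+n_3)$, so $3(n_1+n_2+n_3)\le 2S=2(n_1+n_2+n_3)+2n_4$, i.e. $n_1+n_2+n_3\le 2n_4$, contradicting $n_1+n_2+n_3\ge 3n_4$. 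Since $n_1\ge\cdots\ge n_4$, this forces $n_3=n_4=1$, and then the inequality collapses to $n_1n_2\le 3$; together with $n_1\ge n_2\ge 1$ this leaves only $(n_1,n_2)\in\{(1,1),(2,1),(3,1)\}$, that is, $n_2=n_3=n_4=1$ and $n_1\le 3$.

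It remains to rule out $(n_1,n_2,n_3,n_4)=(3,1,1,1)$; this is the step I expect to require an actual argument rather than bookkeeping. Suppose $D$ is an orientation of $K_{3,1,1,1}$ with $C(D)$ triangle-free. Then $|A(D)|=12$ and $|V(D)|=6$; since triangle-freeness forces $d^-(v)\le 2$ for every vertex and $\sum_v d^-(v)=12$, every vertex has indegree exactly $2$. Write $V_1=\{x_1,x_2,x_3\}$ for the part of size $3$ and let $a,b,c$ be the three singleton vertices; each of $a,b,c$ has degree $5$ in $K_{3,1,1,1}$, hence outdegree $3$ in $D$. I would then show any two of $a,b,c$ compete: for the pair $\{a,b\}$, say $a\to b$; then $N^+(b)$ excludes both $a$ and $b$, so $N^+(b)\subseteq U:=\{c,x_1,x_2,x_3\}$ with $|N^+(b)|=3$, while $N^+(a)\setminus\{b\}\subseteq U$ has size $2$; since $3+2>4=|U|$, these sets meet, giving a common out-neighbour of $a$ and $b$, so $ab\in E(C(D))$. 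The same argument applies to $\{a,c\}$ and $\{b,c\}$, so $\{a,b,c\}$ induces a triangle in $C(D)$ --- a contradiction. Hence $n_1\le 2$, completing the proof.

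The main obstacle is exactly the $K_{3,1,1,1}$ case: the edge-count inequality is tight there ($|A(D)|=2|V(D)|$) and so cannot by itself exclude it. The resolution is to use that tightness to pin down the indegree sequence and then exploit that the three singleton vertices each have outdegree $3$ while, after deleting one of them, only four candidates remain for their out-neighbourhoods --- too few to keep them from pairwise competing. Everything before that is routine manipulation of $\sum_{i<j} n_in_j\le 2\sum_i n_i$.
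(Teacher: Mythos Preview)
Your proof is correct, and both stages differ from the paper's.

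For the reduction to $n_2=n_3=n_4=1$, the paper argues by taking a subdigraph: if $n_2\ge 2$, pick $v_1,v_1'\in V_1$, $v_2,v_2'\in V_2$, $v_3\in V_3$, $v_4\in V_4$; the induced $4$-partite tournament on these six vertices has $13$ arcs, so some vertex has indegree at least $3$, contradicting triangle-freeness. You instead manipulate the global inequality $\sum_{i<j}n_in_j\le 2\sum_i n_i$ directly. Both are short; the paper's subdigraph trick is perhaps more reusable in similar situations, while yours stays purely arithmetic.

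For ruling out $K_{3,1,1,1}$, your argument is noticeably cleaner than the paper's. The paper also observes that every vertex has indegree $2$, but then splits into cases according to whether $|E(C(D))|=6$ or $|E(C(D))|\le 5$; the second case requires invoking Lemma~\ref{lem:same-out-in-neighbor-same-partite} to force two vertices with a common in-neighbourhood into $V_1$, followed by a chain of deductions ending in a triangle on $\{v_2,v_3,v_4\}$. Your pigeonhole on the out-neighbourhoods of the three singleton vertices (each of outdegree $3$, with only four candidates left once one of the pair is removed) reaches the same triangle on $\{a,b,c\}$ in two lines, with no case split and no auxiliary lemma.
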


\begin{proof}
Suppose that there exists an orientation $D$ of $K_{n_1,n_2,n_3,n_4}$ whose competition graph is triangle-free.
Let $V_1,\ldots,V_4$ be the partite sets of $D$ with $|V_i|=n_i$ for each $1\leq i \leq 4$.
We take a vertex $v_i$ in $V_i$ for each $1\leq i \leq 4$.
 Suppose, to the contrary, that
  $n_2 \geq 2$. Then $n_1 \geq 2$. 
    We may take a vertex $v'_1$ (resp.\ $v'_2$) distinct from $v_1$ (resp.\ $v_2$) in $V_1$ (resp.\ $V_2$) so that the subdigraph induced by $\{v_1,v'_1,v_2,v'_2,v_3,v_4\}$ is a $4$-partite tournament $T$. 
   Then $T$ has $13$ arcs.
Since $|V(T)|=6$, at least one vertex of $T$ has indegree at least $3$, which is a contradiction.
   Thus at most one partite set of $D$ has size at least $2$.
    Hence $n_2=n_3=n_4=1$.
   Therefore $|A(D)|=3n_1+3$.
   By Lemma~\ref{lem:number-edges-triangle-free}, $|A(D)| \leq 2(|V(D)|) = 2(n_1+3)$.
   Thus $ 3n_1+3 \leq 2(n_1+3)$, so $n_1 \leq 3$.

To reach a contradiction, suppose $n_1=3$.
   Then $D$ is an orientation of $K_{3,1,1,1}$.
   Since $|V(D)|=6$ and $|A(D)|=12$,
   \begin{equation} \label{eq:thm:charact-nonconnected-4-partite-1}
   d^-(v)=2
   \end{equation}
   for each vertex $v$ in $D$.
    We note that $|E(C(D))|\leq 6$ by Lemma~\ref{lem:number-edges-triangle-free}.
   Suppose $|E(C(D))|=6$.
   Then, since $|V(D)|=6$, each pair of vertices shares at most one common out-neighbor in $D$.
   Since $n_1=3$ and each vertex in $V_1$ has indegree $2$ by~\eqref{eq:thm:charact-nonconnected-4-partite-1},
   each vertex in $V_1$ is a common out-neighbor of $v_i$ and $v_j$ for some $i,j \in \{2,3,4\}$.
   Therefore $v_i$ and $v_j$ have a common out-neighbor in $V_1$ for each $2 \leq i \neq j \leq 4$.
   Thus $\{v_2,v_3,v_4\}$ forms a triangle in $C(D)$, which is a contradiction.
   Hence $|E(C(D))| \neq6$ and so $|E(C(D))| \leq 5$.
   Then, there exists at least one pair of vertices which has two distinct common out-neighbors by~\eqref{eq:thm:charact-nonconnected-4-partite-1}.
   Since each vertex in $V_1$ has outdegree $1$ by~\eqref{eq:thm:charact-nonconnected-4-partite-1},
   such a pair of vertices belongs to $\{v_2,v_3,v_4\}$.
   Without loss of generality, we may assume $\{v_2,v_3\}$ is such a pair.
   Let $x$ and $y$ be their distinct common out-neighbors of $v_2$ and $v_3$.
   Then \[N^-(x)=N^-(y)=\{v_2,v_3\}\] by~\eqref{eq:thm:charact-nonconnected-4-partite-1}.
   Since each vertex in $D$ has outdegree at least $1$ by~\eqref{eq:thm:charact-nonconnected-4-partite-1},
   $x$ and $y$ belong to the same partite set by Lemma~\ref{lem:same-out-in-neighbor-same-partite} and so $\{x,y\} \subset V_1$.
   Thus $N^+(x)=N^+(y)=\{v_4\}$.
    Hence $\{x,y\} \subseteq N^-(v_4)$ and so,    by~\eqref{eq:thm:charact-nonconnected-4-partite-1},
   $N^-(v_4)=\{x,y\}$.
   Therefore $N^+(v_4)=\{v_2,v_3,z\}$ where $z$ is a vertex in $D$ distinct from $x$ and $y$ in $V_1$.
   Without loss of generality,
   we may assume
   \[(v_3,v_2)\in A(D).\]
   Then $v_2$ is a common out-neighbor of $v_3$ and $v_4$.
Therefore by~\eqref{eq:thm:charact-nonconnected-4-partite-1}, $(v_2,z)\in A(D)$.
Hence $z$ is a common out-neighbor of $v_2$ and $v_4$.
   Thus $\{v_2,v_3,v_4\}$ forms a triangle in $C(D)$, which is a contradiction.
   Therefore $|V_1| \neq 3$ and so $|V_1|\leq 2$.
\end{proof}

\begin{Prop} \label{prop:2-partite}
(Kim \cite{kim2016competition}). Let $D$ be an orientation of a bipartite graph with bipartition $(V_1,V_2)$.
Then the competition graph of $D$ has no edges between the vertices in $V_1$ and the vertices in $V_2$.
\end{Prop}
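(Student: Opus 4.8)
The plan is a short argument by contradiction that rests only on the definition of an orientation of a bipartite graph. Recall that if $D$ is an orientation of a bipartite graph with bipartition $(V_1,V_2)$, then the underlying graph of $D$ is that bipartite graph, so it has no edge inside $V_1$ and no edge inside $V_2$. Consequently every arc of $D$ joins a vertex of $V_1$ to a vertex of $V_2$; this is the only structural fact I will use.

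Suppose, to the contrary, that some $u \in V_1$ and some $v \in V_2$ are adjacent in $C(D)$. By the definition of the competition graph, $u$ and $v$ have a common out-neighbor $w \in V(D)$, that is, $(u,w) \in A(D)$ and $(v,w) \in A(D)$. Applying the observation above to the arc $(u,w)$, since $u \in V_1$ we must have $w \in V_2$; applying it to the arc $(v,w)$, since $v \in V_2$ we must have $w \in V_1$. As $V_1 \cap V_2 = \emptyset$, this is impossible. Hence no vertex of $V_1$ is adjacent in $C(D)$ to a vertex of $V_2$, which is the claim.

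There is essentially no obstacle in this proof; the only point needing care is to cite correctly that an orientation of a bipartite graph cannot have an arc with both endpoints in the same part, which is immediate from the fact that its underlying graph is the given bipartite graph. (The statement is due to Kim~\cite{kim2016competition}, so one may alternatively simply refer to that source.)
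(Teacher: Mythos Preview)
Your proof is correct. Note that the paper does not actually prove this proposition: it is simply quoted as a known result of Kim~\cite{kim2016competition} and used without argument. Your short contradiction argument is exactly the standard one-line justification, so there is nothing to compare against in the paper itself.
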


\begin{Thm} \label{thm:complete-triangle-free-multipartite}
Let $G$ be a connected and triangle-free graph.
Then $G$ is the competition graph of a $k$-partite tournament for some $k \geq 2$ if and only if $k \in \{3,4,5\} $ and $G$ is isomorphic to a graph belonging to the following set:
\[
 \begin{cases}
    \{ G_1,G_2,G_3,G_4,P_6,C_6 \}  & \mbox{if $k=3$;}   \\
  \{P_5, K_{1,3}, G_2 \}  & \mbox{if $k=4$;}
\\
 \{ C_5 \}  & \mbox{if $k=5$,}
  \end{cases}
 \]
 where $K_{1,3}$ is a star graph with four vertices and $G_1,G_2,G_3,$ and $G_4$ are the graphs given in Figure~\ref{fig:tri-fre}.
 \end{Thm}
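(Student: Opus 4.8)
The plan is to argue separately for each value of $k$. If $k\geq 6$, Lemma~\ref{lem:connected-k-condition} shows every competition graph of a $k$-partite tournament contains a triangle, so no triangle-free $G$ arises; if $k=2$, Proposition~\ref{prop:2-partite} shows the competition graph of a bipartite tournament has no edge between the two parts, so, both parts being nonempty, it is disconnected. Hence only $k\in\{3,4,5\}$ can occur, and for each of these I would prove both implications.

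The cases $k=3$ and $k=5$ reduce to results already in hand. For $k=3$, Lemmas~\ref{cor13-2} and~\ref{thm:tri-fre} show that a connected triangle-free competition graph of a tripartite tournament is exactly one of $G_1,G_2,G_3,G_4,P_6,C_6$, and the realizing orientations are exhibited there. For $k=5$, Lemma~\ref{lem:triangle-free-5-partite-tournament} shows $D$ must be a regular $5$-tournament with $C(D)\cong C_5$, and the regular $5$-tournament of Figure~\ref{fig:n=5:tri-fre-1} realizes $C_5$. So these two values give precisely the stated sets.

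The substantive case is $k=4$. By Lemma~\ref{lem:condition-of-sizes-4-partite sets}, a $4$-partite tournament with triangle-free competition graph is an orientation of $K_4$ or of $K_{2,1,1,1}$, and triangle-freeness forces $d^-(v)\leq 2$ for every vertex $v$. If $D$ is an orientation of $K_4$, then $\sum_v d^-(v)=6$ over four vertices, so the indegree sequence is $(2,2,1,1)$ or $(2,2,2,0)$; the former yields at most two edges, hence a disconnected graph, while in the latter the indegree-$0$ vertex beats the other three, which must form a $3$-cycle (a $3$-tournament with all indegrees equal to $1$), and then $C(D)\cong K_{1,3}$. If $D$ is an orientation of $K_{2,1,1,1}$, then $\sum_v d^-(v)=9$ over five vertices forces the indegree sequence $(2,2,2,2,1)$; since $C(D)$ is connected on five vertices it has at least four edges, while each of the four indegree-$2$ vertices witnesses an edge (its two in-neighbors have it as a common out-neighbor) and no two of them have the same in-neighborhood, for otherwise Lemma~\ref{lem:same-out-in-neighbor-same-partite} (applicable since an indegree-$2$ vertex of $K_{2,1,1,1}$ has outdegree at least $1$) would place them in a component isomorphic to $K_2$, contradicting connectedness. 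Hence $C(D)$ has exactly four edges, so it is a tree on five vertices, namely $P_5$, $G_2$, or the star $G_1=K_{1,4}$. To exclude $G_1$ I would use the fact that, when all indegrees are at most $2$, a vertex $w$ of degree $d$ in $C(D)$ has outdegree at least $d$, since distinct neighbors of $w$ are matched to distinct common out-neighbors (a vertex that is a common out-neighbor of three vertices would have indegree $3$); a $K_{1,4}$-center would then need outdegree at least $4$, forcing it into a singleton part of $K_{2,1,1,1}$ and hence indegree $0$, a contradiction. So the only candidates for $k=4$ are $K_{1,3},P_5,G_2$, and for the converse I would display a $4$-tournament consisting of one vertex dominating a $3$-cycle on the other three vertices (realizing $K_{1,3}$) together with two orientations of $K_{2,1,1,1}$ realizing $P_5$ and $G_2$, in figures parallel to Figures~\ref{fig:n=5:tri-fre} and~\ref{fig:n=6:tri-fre}.

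I expect the hardest part to be the $K_{2,1,1,1}$ analysis inside the $k=4$ case: one must combine the forced indegree sequence $(2,2,2,2,1)$, the reduction of $C(D)$ to a tree on five vertices, the degree-versus-outdegree inequality to eliminate the star, and then produce two explicit realizing orientations of $K_{2,1,1,1}$; everything else is assembly of earlier lemmas and routine small checks.
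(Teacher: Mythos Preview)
Your proposal is correct and follows the same overall architecture as the paper: rule out $k=2$ via Proposition~\ref{prop:2-partite} and $k\geq 6$ via Lemma~\ref{lem:connected-k-condition}, dispatch $k=3$ and $k=5$ by the lemmas already in hand, and for $k=4$ reduce via Lemma~\ref{lem:condition-of-sizes-4-partite sets} to orientations of $K_4$ or $K_{2,1,1,1}$ and then count edges and indegrees. The $K_{2,1,1,1}$ analysis and the exclusion of $K_{1,4}$ are essentially identical to the paper's.

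The one genuine difference is in the $K_4$ sub-case. The paper shows $|E(C(D))|=3$, so $C(D)$ is $P_4$ or $K_{1,3}$, and then invokes Fisher's result (Proposition~\ref{prop:no-path-on=four-tournament}) to rule out $P_4$. You instead split on the indegree sequence: $(2,2,1,1)$ gives at most two edges, too few for connectivity, while $(2,2,2,0)$ forces the source to dominate a $3$-cycle, yielding $K_{1,3}$ directly. Your argument is more elementary and self-contained, making the $k=4$ case independent of the cited domination-graph literature; the paper's route is a line shorter given that Proposition~\ref{prop:no-path-on=four-tournament} is already available.
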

 \begin{proof}
 Let $D$ be a $k$-partite tournament whose competition graph is connected and triangle-free for some $k \geq 2$.
 Then, $k \in \{3,4,5\}$ by Proposition~\ref{prop:2-partite} and Lemma~\ref{lem:connected-k-condition}.
If $k = 3$, then $C(D)$ is isomorphic to a graph in $\{ G_1,G_2,G_3,G_4,P_6,C_6 \}$ by Lemma~\ref{thm:tri-fre}.
If $k=5$, then $C(D)$ is isomorphic to $C_5$ by Lemma~\ref{lem:triangle-free-5-partite-tournament}.

Suppose $k = 4$. Let $V_1, V_2,V_3, $ and $V_4$ be the partite sets of $D$. Without loss of generality, we may assume $n_1 \geq n_2 \geq n_3 \geq n_4 $ where $|V_i|=n_i$ for each $1\leq i \leq 4$.
Then  $n_1\leq 2$ and $n_2=n_3=n_4=1$ by Lemma~\ref{lem:condition-of-sizes-4-partite sets}.

   {\it Case 1.} $n_1=2$.
   Then $|V(D)|=5$ and $|A(D)|=9$.
   Therefore $|E(C(D))| \leq 4$ by Lemma~\ref{lem:number-edges-triangle-free}.
   Since $C(D)$ is connected, $|E(C(D))| \geq 4$ and so $|E(C(D))|=4$.
   Therefore $C(D)$ is a tree.
Thus $C(D)$ is isomorphic to a path graph, $G_2$, or a star graph.
Suppose, to the contrary, that $C(D)$ is a star graph.
Then there exists a center $v$ in $C(D)$.
Since $v$ has degree $4$ in $C(D)$,
$d^+(v) \geq 4$.
Then $v \in V_2 \cup V_3 \cup V_4$ and so $d^+(v)=4$ and $d^-(v)=0$.
Since $C(D)$ is triangle-free, each vertex in $D$ has indegree at most $2$.
Therefore $|A(D)| \leq 8$ and so we have reached a contradiction.
Thus $C(D)$ is isomorphic to $P_5$ or $G_5$.

   {\it Case 2.} $n_1=1$.
   Then $|A(D)|=6$ and so, by Lemma~\ref{lem:number-edges-triangle-free}, $|E(C(D))| \leq 3$.
   Since $C(D)$ is connected, $|E(C(D))| \geq 3$ and so $|E(C(D))|=3$.
   Therefore $C(D)$ is a path graph or a star graph.
   If $C(D)$ is a path graph, then
   the complement of $C(D)$ is a path graph, which contradicts Proposition~\ref{prop:no-path-on=four-tournament}.
   Therefore $C(D)$ is a star graph $K_{1,3}$.

Now we show the ``if'' part.
The competition graph of the $5$-tournament given in Figure~\ref{fig:n=5:tri-fre-1} is $C_5$.
For the $4$-partite tournaments $D_5$, $D_6$, and $D_7$ given in Figure~\ref{fig:n=4:tri-fre},
it is easy to check that $C(D_5) \cong K_{1,3}$, $C(D_6) \cong P_5$, and $C(D_7) \cong G_5$.
Each graph in $\{ G_1,G_2,G_3,G_4,P_6,C_6 \}$ is the competition graph of a tripartite tournament by Lemma~\ref{thm:tri-fre}.
Hence we have shown that the ``if" part is true.
 \end{proof}
 \begin{figure}
\begin{center}
\begin{tikzpicture}[auto,thick, scale=1.2]
    \tikzstyle{player}=[minimum size=5pt,inner sep=0pt,outer sep=0pt,draw,circle]
    \tikzstyle{source}=[minimum size=5pt,inner sep=0pt,outer sep=0pt,ball color=black, circle]
    \tikzstyle{arc}=[minimum size=5pt,inner sep=1pt,outer sep=1pt, font=\footnotesize]
    \draw (270:2cm) node (name) {$D_5$};
  \path (0:1cm)   node [player]  (v1) {};
  \path (90:1cm)   node [player]  (v2) {};
  \path (180:1cm)   node [player]  (v3) {};
  \path (270:1cm)   node [player]  (v4) {};
\draw[black,thick,-stealth] (v1) - + (v2);
\draw[black,thick,-stealth] (v2) - + (v4);
\draw[black,thick,-stealth] (v3) - + (v2);
\draw[black,thick,-stealth] (v3) - + (v4);
\draw[black,thick,-stealth] (v3) - + (v1);
\draw[black,thick,-stealth] (v4) - + (v1);
\end{tikzpicture}
\hspace{3em}
\begin{tikzpicture}[auto,thick, scale=1.2]
    \tikzstyle{player}=[minimum size=5pt,inner sep=0pt,outer sep=0pt,draw,circle]
    \tikzstyle{source}=[minimum size=5pt,inner sep=0pt,outer sep=0pt,ball color=black, circle]
    \tikzstyle{arc}=[minimum size=5pt,inner sep=1pt,outer sep=1pt, font=\footnotesize]
     \draw (270:2cm) node (name) {$D_6$};
       \path (0:1cm)   node [player]  (v1) {};
    \path (72:1cm)   node [player]  (v2) {};
    \path (144:1cm)   node [player]  (v3) {};
    \path (216:1cm)    node [player]  (v4) {};
    \path (288:1cm)   node [player]  (v5) {};

    \draw[black,thick,-stealth] (v1) - + (v3);
    \draw[black,thick,-stealth] (v2) - + (v4);
    \draw[black,thick,-stealth] (v2) - + (v5);
    \draw[black,thick,-stealth] (v3) - + (v2);
    \draw[black,thick,-stealth] (v3) - + (v4);
    \draw[black,thick,-stealth] (v4) - + (v5);
    \draw[black,thick,-stealth] (v4) - + (v1);
    \draw[black,thick,-stealth] (v5) - + (v1);
    \draw[black,thick,-stealth] (v5) - + (v3);
\end{tikzpicture}
\hspace{3em}
\begin{tikzpicture}[auto,thick, scale=1.2]
    \tikzstyle{player}=[minimum size=5pt,inner sep=0pt,outer sep=0pt,draw,circle]
    \tikzstyle{source}=[minimum size=5pt,inner sep=0pt,outer sep=0pt,ball color=black, circle]
    \tikzstyle{arc}=[minimum size=5pt,inner sep=1pt,outer sep=1pt, font=\footnotesize]
     \draw (270:2cm) node (name) {$D_7$};
       \path (0:1cm)   node [player]  (v1) {};
    \path (72:1cm)   node [player]  (v2) {};
    \path (144:1cm)   node [player]  (v3) {};
    \path (216:1cm)    node [player]  (v4) {};
    \path (288:1cm)   node [player]  (v5) {};

    \draw[black,thick,-stealth] (v1) - + (v5);
    \draw[black,thick,-stealth] (v2) - + (v4);
    \draw[black,thick,-stealth] (v5) - + (v2);
    \draw[black,thick,-stealth] (v3) - + (v2);
    \draw[black,thick,-stealth] (v3) - + (v1);
    \draw[black,thick,-stealth] (v3) - + (v4);
    \draw[black,thick,-stealth] (v4) - + (v5);
    \draw[black,thick,-stealth] (v4) - + (v1);
    \draw[black,thick,-stealth] (v5) - + (v3);
\end{tikzpicture}
\caption{Three digraphs $D_5$, $D_6$, and $D_7$ which are orientations of $K_{1,1,1,1}$,  $K_{2,1,1,1}$, and $K_{2,1,1,1}$, respectively, and whose competition graphs are isomorphic to $K_{1,3}$, $P_5$, and $G_5$, respectively}\label{fig:n=4:tri-fre}
\end{center}
\end{figure}
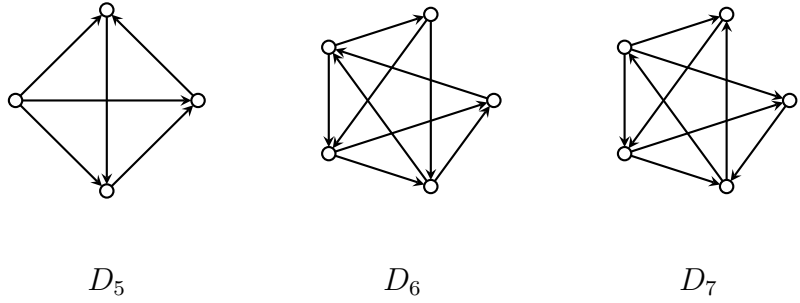

\section{The disconnected triangle-free competition graphs of multipartite tournaments}

\subsection{Bipartite tournaments}

 \begin{Lem} \label{lem:sizes-of-2partite}
    Let $n_1$ and $n_2$ be positive integers such that $n_1 \geq n_2$.
     Suppose that there exists an orientation $D$ of $K_{n_1,n_2}$ whose competition graph is  triangle-free. Then
     one of the following holds:  (a) $n_2=1$; (b) $n_2=2$;
        (c) $n_1 \leq 6$ and $n_2=3$;
         (d) $n_1 =4$ and $n_2=4$.
 \end{Lem}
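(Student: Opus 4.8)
The plan is to reduce the statement to a single Diophantine inequality coming from the arc count, and then to split into cases on $n_2$. First I would observe that, since $D$ is an orientation of $K_{n_1,n_2}$, we have $|A(D)| = n_1 n_2$ and $|V(D)| = n_1 + n_2$. Because $C(D)$ is triangle-free, Lemma~\ref{lem:number-edges-triangle-free} yields
\[
n_1 n_2 = |A(D)| \le 2\,|V(D)| = 2(n_1+n_2),
\]
which rearranges to
\[
(n_1-2)(n_2-2) \le 4 .
\]

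Next I would case-analyze on $n_2$, keeping in mind $n_1 \ge n_2 \ge 1$. If $n_2 = 1$ or $n_2 = 2$, then conclusions (a) and (b) hold and there is nothing more to prove. If $n_2 = 3$, the inequality becomes $n_1 - 2 \le 4$, i.e.\ $n_1 \le 6$, which is (c). If $n_2 = 4$, the inequality becomes $2(n_1-2) \le 4$, i.e.\ $n_1 \le 4$; combined with $n_1 \ge n_2 = 4$ this forces $n_1 = 4$, which is (d). Finally, if $n_2 \ge 5$, then $n_1 - 2 \ge n_2 - 2 \ge 3$, so $(n_1-2)(n_2-2) \ge 9 > 4$, contradicting the displayed inequality; hence this case cannot occur, and the four listed possibilities are exhaustive.

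I do not expect a genuine obstacle here: the whole argument is the one inequality plus routine bookkeeping. The only points deserving a little care are that the edge bound of Lemma~\ref{lem:number-edges-triangle-free} is exactly what confines us to finitely many $(n_1,n_2)$, and that the case $n_2 = 4$ relies on the normalization $n_1 \ge n_2$ to pin down a single value. One could additionally invoke Proposition~\ref{prop:2-partite} to say more about the structure of $C(D)$, but it is not needed for this necessary-condition statement.
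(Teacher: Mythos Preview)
Your proposal is correct and follows essentially the same approach as the paper: both derive the inequality $(n_1-2)(n_2-2)\le 4$ from Lemma~\ref{lem:number-edges-triangle-free} and then do the same case split on $n_2$. Your treatment is slightly more explicit in ruling out $n_2\ge 5$, but otherwise the arguments coincide.
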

        \begin{proof} It is easy to check that
         $|A(D)|=n_1 n_2$.
         Then, by Lemma~\ref{lem:number-edges-triangle-free},
         $ n_1 n_2  \leq 2(n_1 + n_2 )$.
        Thus
        \begin{eqnarray}\label{eq:lem:sizes-of-2partite}
           (n_1-2)(n_2-2)\leq 4.
        \end{eqnarray}
        Then it is easy to check that $n_2 \leq 4$.
        If $n_2=1$ or $n_2=2$,
        then $n_1$ can be any positive number satisfying the inequality $n_1 \geq n_2$.
        If $n_2=3$,
        then $n_1 \leq 6$.
        If $n_2=4$,
        then $n_1=4$.
        \end{proof}

\begin{Prop} \label{prop:2-partite-path}
(Kim \cite{kim2016competition}).
Let $m$ and $n$ be positive integers such that $m \geq n$.
Then $P_m \cup P_n$ is the competition graph of a bipartite tournament if and only if $(m,n)$ is one of $(1,1)$, $(2,1)$, $(3,3)$, and $(4,3)$.
\end{Prop}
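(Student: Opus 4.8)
The plan is to prove the ``if'' part by exhibiting a handful of small bipartite tournaments and the ``only if'' part by a structural analysis resting on Proposition~\ref{prop:2-partite} and the size restriction in Lemma~\ref{lem:sizes-of-2partite}. For the ``if'' part I would display orientations of $K_{1,1}$, $K_{2,1}$, $K_{3,3}$, and $K_{4,3}$ whose competition graphs are $P_1\cup P_1$, $P_2\cup P_1$, $P_3\cup P_3$, and $P_4\cup P_3$, respectively; the first two are immediate (for $K_{2,1}$, orient both arcs toward the singleton), and the last two are small enough to check directly. For instance, on $K_{3,3}$ with parts $\{a_1,a_2,a_3\}$ and $\{b_1,b_2,b_3\}$, the orientation $a_1\to b_1$, $a_2\to b_2$, $a_3\to b_1$, $a_3\to b_2$, $b_1\to a_2$, $b_2\to a_1$, $b_3\to a_1$, $b_3\to a_2$, $b_3\to a_3$ has competition graph equal to the union of the paths $a_1a_3a_2$ and $b_1b_3b_2$; a similar orientation of $K_{4,3}$ works for $P_4\cup P_3$. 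So this direction is routine.

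For the ``only if'' part, let $D$ be an orientation of $K_{n_1,n_2}$ with $C(D)\cong P_m\cup P_n$ and $m\ge n$. By Proposition~\ref{prop:2-partite} every edge of $C(D)$ has both endpoints in the same partite set; hence, since a component is connected, every component of $C(D)$ that contains an edge lies entirely in one partite set. If $m=n=1$ there is nothing to prove. If $n=1$ and $m\ge2$, the component $P_m$ sits inside one partite set and the isolated component $P_1$ is forced into the other, which must then be a single vertex $v_0$; a direct check of out-neighbourhoods shows $C(D)$ is the disjoint union of the complete graph on $N^-(v_0)$ with some isolated vertices, and triangle-freeness of $C(D)$ forces $|N^-(v_0)|\le2$, so $C(D)[V_1]\cong P_m$ has at most one edge, whence $m=2$ and $(m,n)=(2,1)$.

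Assume now $n\ge2$. Then both paths carry edges, each lies in a single partite set, and they cannot lie in the same one (that would make the other partite set empty), so after relabelling $|V_1|=m$, $|V_2|=n$, $C(D)[V_1]\cong P_m$, $C(D)[V_2]\cong P_n$. The key counting step is this: triangle-freeness gives indegree $\le2$ everywhere, each of the $m-1$ edges of $P_m$ on $V_1$ is witnessed by a common out-neighbour $w\in V_2$ whose in-neighbourhood is exactly that edge, and distinct edges give distinct such $w$; hence at least $m-1$ vertices of $V_2$ have indegree $2$, so $n\ge m-1$ and thus $m\in\{n,n+1\}$. Feeding this into Lemma~\ref{lem:sizes-of-2partite} narrows the possibilities to $(m,n)\in\{(2,2),(3,2),(3,3),(4,3),(4,4)\}$. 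I would then discard $(2,2)$ and $(3,2)$: the edge of the $P_2$ living on $V_2$ forces a vertex $u^*\in V_1$ with $N^-(u^*)=V_2$, hence $N^+(u^*)=\emptyset$, so $u^*$ is isolated in $C(D)$, contradicting that $C(D)[V_1]$ is a path on at least two vertices. And I would discard $(4,4)$ by a degree count: $|A(D)|=16$ combined with the witness bound (at least three indegree-$2$ vertices in each part) forces every vertex to have indegree exactly $2$, so all four vertices of $V_2$ witness edges of the $P_4$ on $V_1$, giving it four edges instead of three. Only $(3,3)$ and $(4,3)$ survive, matching the claimed list.

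I expect the delicate point to be exactly the elimination of the three exceptional size pairs $(2,2)$, $(3,2)$, $(4,4)$: the global inequalities ($|E(C(D))|\le|A(D)|/2\le|V(D)|$ and Lemma~\ref{lem:sizes-of-2partite}) do not finish the job on their own, and one has to exploit the fine structure of the two paths together with which partite-set vertices can act as ``witnesses'' of edges. Everything else is bookkeeping on top of Proposition~\ref{prop:2-partite}.
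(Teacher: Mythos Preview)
The paper does not prove this proposition at all: it is quoted from Kim et al.\ \cite{kim2016competition} and used as a black box (it is later subsumed by the case analysis in Theorem~\ref{thm:charact-nonconnected-2-partite}, but never argued independently). So there is no ``paper's proof'' to compare against. That said, your argument is self-contained and essentially correct; the reduction via Proposition~\ref{prop:2-partite} to ``each $P_\ell$ with $\ell\ge 2$ sits inside a single part'', together with the witness--injection $m-1\le n$ and Lemma~\ref{lem:sizes-of-2partite}, is exactly the right skeleton, and your eliminations of $(2,2)$ and $(3,2)$ are clean.

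There is one genuine gap, in the $(4,4)$ elimination. From ``every vertex has indegree exactly $2$'' you conclude ``all four vertices of $V_2$ witness edges of the $P_4$ on $V_1$, giving it four edges.'' But your earlier injection goes the other way (distinct \emph{edges} have distinct witnesses); nothing prevents two vertices $w,w'\in V_2$ from having the \emph{same} in-neighbourhood $\{u,u'\}\subset V_1$, and then they witness the same edge. You need one more line: since $P_4$ on $V_1$ has only three edges and $|V_2|=4$, the pigeonhole gives such $w,w'$ with $N^-(w)=N^-(w')$; then $N^+(w)=N^+(w')\neq\emptyset$ (outdegree $2$), so by Lemma~\ref{lem:same-out-in-neighbor-same-partite} (or directly) $\{w,w'\}$ is a two-vertex component of $C(D)$, contradicting that every component of $P_4\cup P_4$ has four vertices. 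With that patch the proof is complete.
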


\begin{Prop} \label{prop:2-partite-cycle}
(Kim \cite{kim2016competition}).
Let $m$ and $n$ be positive integers greater than or equal to $3$.
Then $C_m \cup C_n$ is the competition graph of a bipartite tournament if and only if $(m,n)=(4,4)$.
\end{Prop}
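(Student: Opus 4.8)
The plan is to prove the two directions separately: the ``if'' direction by exhibiting one explicit orientation of $K_{4,4}$, and the ``only if'' direction by reducing, via Proposition~\ref{prop:2-partite}, to a short size analysis of orientations of $K_{m,n}$, with one genuinely delicate case at the end. For the ``if'' direction I would take the orientation $D$ of $K_{4,4}$ with partite sets $V_1=\{a_0,a_1,a_2,a_3\}$ and $V_2=\{b_0,b_1,b_2,b_3\}$, indices read modulo $4$, defined by $a_i\to b_i$ and $a_{i+1}\to b_i$ for every $i$ (so $N^-(b_i)=\{a_i,a_{i+1}\}$ and $N^-(a_i)=\{b_{i+1},b_{i+2}\}$, and all remaining arcs between $V_1$ and $V_2$ are thereby determined). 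A direct check shows that $a_i$ and $a_k$ share a common out-neighbour in $V_2$ exactly when $k\equiv i\pm 1$, and symmetrically for the $b_i$'s; hence $C(D)$ is the disjoint union of the $4$-cycle $a_0a_1a_2a_3$ and the $4$-cycle $b_0b_1b_2b_3$, i.e.\ $C_4\cup C_4$.

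For the ``only if'' direction, suppose $C_m\cup C_n$ with $m\ge n\ge 3$ is the competition graph of a bipartite tournament $D$ with bipartition $(V_1,V_2)$. By Proposition~\ref{prop:2-partite}, $C(D)$ has no edge between $V_1$ and $V_2$, so each of the two connected cycles lies inside a single partite set; since $C_m\cup C_n$ has $m+n$ vertices and no isolated vertex, the two cycles occupy the two partite sets separately and $D$ is an orientation of $K_{m,n}$ with $|A(D)|=mn$ and $|V(D)|=m+n$. If $n\ge 4$, then $C(D)$ is triangle-free, so Lemma~\ref{lem:number-edges-triangle-free} gives $m+n=|E(C(D))|\le |A(D)|/2\le |V(D)|=m+n$, forcing $mn=2(m+n)$, i.e.\ $(m-2)(n-2)=4$; together with $m\ge n\ge 4$ this yields $m=n=4$. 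If $n=3$ and $m\ge 4$, then the partite set carrying the triangle $C_3$ has only three vertices, each with at most two in-neighbours (these lie in the other partite set and form a clique inside the triangle-free cycle $C_m$), so each of them accounts for at most one edge of that $C_m$ — a total of at most $3<m$ edges, a contradiction.

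The step I expect to be the main obstacle is the remaining case $(m,n)=(3,3)$: ruling out $K_3\cup K_3$ as the competition graph of an orientation $D$ of $K_{3,3}$, where the counting lemma no longer applies. Here I would encode $D$ by the $3\times 3$ zero--one matrix $M$ with $M_{ij}=1$ exactly when the $i$-th vertex of $V_1$ beats the $j$-th vertex of $V_2$. Then $C(D)[V_1]=K_3$ says that every two rows of $M$ share a common $1$-column, and $C(D)[V_2]=K_3$ says that every two columns share a common $0$-row. Picking, for each of the three pairs of rows, a witnessing common $1$-column, one shows this assignment is injective: a repeated column would contain a $1$ in all three rows (any two of the three row-pairs together cover all three rows), and then the corresponding vertex of $V_2$ would be isolated in $C(D)[V_2]$. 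Hence $M$ equals, up to permuting columns, $\left(\begin{smallmatrix}1&1&0\\1&0&1\\0&1&1\end{smallmatrix}\right)$; but in this matrix the three columns have their $0$-entries in three distinct rows, so no two columns share a common $0$-row, contradicting $C(D)[V_2]=K_3$. Assembling the cases leaves $(m,n)=(4,4)$ as the only possibility, which completes the argument.
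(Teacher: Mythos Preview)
Your argument is correct. Note, however, that the paper does not supply its own proof of this proposition: it is quoted verbatim from Kim~et~al.~\cite{kim2016competition} and used as a black box in the proof of Theorem~\ref{thm:charact-nonconnected-2-partite}. So there is no in-paper argument to compare against; what you have written is a self-contained proof that relies only on Proposition~\ref{prop:2-partite} and Lemma~\ref{lem:number-edges-triangle-free} from the present paper.

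A couple of minor remarks on presentation. In the case $n\ge 4$ your chain $m+n=|E(C(D))|\le |A(D)|/2\le |V(D)|=m+n$ is exactly Lemma~\ref{lem:number-edges-triangle-free}, and forcing equality in the middle gives $mn=2(m+n)$ as you say. In the case $n=3$, $m\ge 4$, the key point that each vertex of the $3$-side has in-degree at most $2$ follows because its in-neighbourhood is a clique in $C_m$, which is triangle-free; you state this but it is worth making explicit that this is why the triangle-free hypothesis on the \emph{other} cycle is what drives the bound. Finally, in the $(3,3)$ case your injectivity argument is fine, and the step ``hence $M$ equals, up to permuting columns, $\left(\begin{smallmatrix}1&1&0\\1&0&1\\0&1&1\end{smallmatrix}\right)$'' deserves one more sentence: once the three witness columns are distinct (hence exhaust all three columns), none of them can be an all-ones column (else the corresponding $b_j$ would be isolated in $C(D)[V_2]$), so the remaining entry in each column is forced to be $0$.
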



We give a complete characterization for a triangle-free which is a competition graph of a bipartite tournament.
We denote the set of $k$ isolated vertices in a graph by $I_k$.
        \begin{Thm} \label{thm:charact-nonconnected-2-partite}
Let $G$ be a triangle-free graph.
Then $G$ is the competition graph of a bipartite tournament if and only if $G$ is isomorphic to one of the followings:
\begin{itemize}
\item[(a)] an empty graph of order at least $2$
\item[(b)] $P_2$ with at least one isolated vertex
\item[(c)] $ P_2 \cup P_2 $ with at least one isolated vertex
\item[(d)] $ P_3 \cup P_2 $ with at least one isolated vertex
\item[(e)] $ P_2 \cup P_2 \cup P_2 $ with at least one isolated vertex
\item[(f)] $P_3 \cup  I_2$
\item[(g)]  $P_3 \cup P_3$
\item[(h)] $ P_4 \cup P_3$
\item[(i)] $P_3 \cup P_2 \cup P_2$
\item[(j)] $C_4 \cup C_4$
\item[(k)] $P_2\cup P_2 \cup P_2 \cup P_2$.
\end{itemize}

 \end{Thm}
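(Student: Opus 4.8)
The ``only if'' direction rests on the decomposition supplied by Proposition~\ref{prop:2-partite}: if $D$ is an orientation of $K_{n_1,n_2}$ with bipartition $(V_1,V_2)$, $n_1\ge n_2$, then $C(D)$ has no edge between $V_1$ and $V_2$, so $G=C(D)=H_1\sqcup H_2$ with $H_i=C(D)[V_i]$, and each $H_i$ is triangle-free. Triangle-freeness of $G$ is equivalent to $d^-_D(v)\le 2$ for all $v$; hence $|E(H_1)|\le|\{w\in V_2:d^-_D(w)=2\}|\le n_2$ and, symmetrically, $|E(H_2)|\le n_1$, while counting arcs gives the identity $\sum_{v\in V(D)}d^-_D(v)=n_1n_2$. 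By Lemma~\ref{lem:sizes-of-2partite} we have $n_2\in\{1,2,3,4\}$, with $n_1\le 6$ when $n_2=3$ and $n_1=4$ when $n_2=4$, and the proof proceeds by analysing the possible pairs $(n_1,n_2)$.

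When $n_2\le 2$ the argument is short. Here $H_2$ is a graph on at most two vertices (so $I_1$, $I_2$, or $P_2$) and $H_1$ has at most $n_2\le 2$ edges, hence equals one of $I_{n_1}$, $P_2\cup I_{n_1-2}$, $2P_2\cup I_{n_1-4}$, $P_3\cup I_{n_1-3}$. For a candidate pair $(H_1,H_2)$ one looks at the vertices of $V_2$ realizing the edges of $H_1$: each such vertex has in-neighbourhood exactly a $2$-set inside $V_1$, and the common out-neighbourhood of these vertices is precisely the set of vertices of $V_1$ lying in none of those edges. Thus this common out-neighbourhood is nonempty exactly when $H_1$ has an isolated vertex, which (together with Lemma~\ref{lem:same-out-in-neighbor-same-partite}) is what turns $H_2$ from $I_2$ into $P_2$ as soon as $H_1$ has a surplus vertex. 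Running through the combinations yields exactly (a)--(f); in particular $H_1$ cannot be $P_3$ with a surplus vertex unless $H_2=P_2$, which is why the list records $P_3\cup I_2$ (case (f)) but otherwise pairs $P_3$ with an edge only as $P_3\cup P_2\cup I_{\ge1}$ (case (d)).

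The substantial cases are $(n_1,n_2)\in\{(3,3),(4,3),(5,3),(6,3),(4,4)\}$. Since $\sum_v d^-_D(v)=n_1n_2$ and $d^-_D\le2$, the in-degree sequence is essentially forced, so one knows exactly how many vertices of each part have in-degree $2$, hence how many edges $H_1$ and $H_2$ have. Two observations do most of the elimination. First, if all in-degree-$2$ vertices of one part have their (two-element) in-neighbourhoods among at most two $2$-subsets of a three-element opposite part, those subsets share a vertex $w^{\ast}$, which then dominates every such vertex; this forces $d^-_D(w^{\ast})=0$, contradicting the (near-regular) in-degree sequence when $(n_1,n_2)\in\{(5,3),(6,3)\}$ and constraining the cases $(3,3)$ and $(4,3)$. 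Second, if several vertices of one part have pairwise-disjoint, or cyclically arranged, $2$-element in-neighbourhoods covering the opposite part, then their complementary out-neighbourhoods pairwise overlap and force a $K_3$ (for instance three disjoint edges on one side of a $6$-set) or precisely a $C_4$ (for four suitably arranged edges on one side of a $4$-set) in the competition graph of the other part; this is what prevents $3P_2$ from sitting inside a single part and what makes a $C_4$ in $G$ force $G=C_4\cup C_4$. Supplementing these with Propositions~\ref{prop:2-partite-path} and~\ref{prop:2-partite-cycle} to handle the sub-cases in which $G$ is a union of two paths or of two cycles, what survives is $P_3\cup P_3$, $P_4\cup P_3$, $P_3\cup 2P_2$, $C_4\cup C_4$, $4P_2$, and instances of families already found for $n_2\le2$; verifying that nothing else (such as $P_4\cup P_4$, $C_4\cup P_3$, $K_{1,3}\sqcup H$, or $P_3\cup 2P_2\cup I_{\ge1}$) survives is the bookkeeping-heavy heart of the argument and the step I expect to be the main obstacle.

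For the ``if'' direction I exhibit a realizing bipartite tournament for each family. Cases (g), (h) follow from Proposition~\ref{prop:2-partite-path} and (j) from Proposition~\ref{prop:2-partite-cycle}. For the families with an unbounded number of isolated vertices I use a uniform construction with $|V_2|\le 2$: choose the in-neighbourhoods of the (at most two) vertices of $V_2$ so that $H_1$ becomes the appropriate one of $I_{n_1}$, $P_2\cup I_{n_1-2}$, $P_3\cup I_{n_1-3}$, $2P_2\cup I_{n_1-4}$ and orient all remaining arcs from $V_2$ to $V_1$; the extra vertices of $V_1$ are then isolated in $C(D)$, and for at least one such vertex the two vertices of $V_2$ acquire a common out-neighbour and form an additional $P_2$, which produces (a)--(e). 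Case (f) is the instance of the (d)-construction with $V_1$ of size $3$. Finally (i) and (k) are handled by explicit orientations: for (i), take $V_1=\{a_1,a_2,b_1,b_2\}$, $V_2=\{w_1,w_2,w_3\}$ with $N^-_D(w_1)=\{b_1,b_2\}$, $N^-_D(w_3)=\{a_1,a_2\}$, $N^-_D(w_2)=\emptyset$, $N^-_D(a_1)=N^-_D(a_2)=\{w_1,w_2\}$, $N^-_D(b_1)=N^-_D(b_2)=\{w_2,w_3\}$, which gives $C(D)=2P_2\cup P_3$; and for (k), take an orientation of $K_{4,4}$ in which each part is split into two pairs, each pair dominating one half of the other part, so that $C(D)=2P_2\sqcup 2P_2=4P_2$.
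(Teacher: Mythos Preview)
Your overall architecture matches the paper's: decompose $C(D)=H_1\sqcup H_2$ via Proposition~\ref{prop:2-partite}, restrict $(n_1,n_2)$ via Lemma~\ref{lem:sizes-of-2partite}, and case-analyse; your ``if'' constructions are correct and essentially coincide with the paper's. Two problems remain.

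First, a false statement: triangle-freeness of $G$ is \emph{not} equivalent to $d^-_D(v)\le 2$ for all $v$; only the forward implication holds. With $V_1=\{a,b,c\}$, $V_2=\{u,v,w\}$ and $N^-(u)=\{a,b\}$, $N^-(v)=\{b,c\}$, $N^-(w)=\{a,c\}$, every in-degree is at most $2$ yet $C(D)[V_1]$ is a triangle. You only use the correct direction downstream, so nothing later collapses, but the claim as written is wrong and the example shows that in the $(3,3)$ analysis you cannot merely enumerate edge-count pairs $(|E(H_1)|,|E(H_2)|)$ without separately enforcing triangle-freeness on each side.

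Second, and this is the real gap, you explicitly leave the decisive step undone: the analysis of $(n_1,n_2)\in\{(3,3),(4,3),(4,4)\}$. This is not mere bookkeeping that your two ``general observations'' already cover. For $(4,4)$, ruling out $P_2\cup C_4$ requires first forcing every in-degree to equal $2$, then observing that a $P_2$-component forces two vertices with identical in-neighbourhood, which cascades (via the out-degree constraint) to $4P_2$; for $(4,3)$ one must track neighbourhoods vertex by vertex to see that an isolated vertex in $V_2$ forces $3P_2\cup I_1$ while a $P_3$ on $V_2$ forces exactly $P_4\cup P_3$ or $P_3\cup 2P_2$. The paper carries each of these out explicitly. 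Your elimination of $(5,3)$ via the shared vertex $w^{\ast}$ also needs patching: when the unique in-degree-$1$ vertex lies in $V_1$ you only get $d^-(w^{\ast})\le 1$, not $0$, and must add that the single in-degree-$1$ slot is already occupied. The paper's route there is cleaner: each vertex of $V_2$ has out-degree at least $n_1-2>n_1/2$, so any two share an out-neighbour and $V_2$ induces a triangle directly.
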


 \begin{proof}
 We first show the ``only if" part.
Let $D$ be an orientation of $K_{n_1,n_2}$ with $n_1 \geq n_2$ whose competition graph is $G$.
Let $V_1=\{u_1,\ldots,u_{n_1}\}$ and $V_2=\{v_1,\ldots,v_{n_2}\}$ be the partite sets of $D$.
By Proposition~\ref{prop:2-partite},
$G$ is disconnected and there is no edge between the vertices in $V_1$ and the vertices in $V_2$.
Since $G$ is triangle-free, by Lemma~\ref{lem:sizes-of-2partite}, there are four cases to consider; $n_2=1$; $n_2=2$; $n_1 \leq 6$ and $n_2=3$; $n_1 =4$ and $n_2=4$.

{\it Case 1}. $n_2=1$.
Then, since each vertex in $D$ has indegree at most $2$, $G$ is an empty graph of order at least $2$
or $P_2$ with at least one isolated vertex.

{\it Case 2}. $n_2=2$.
Then
$G$ has at most two edges between the vertices in $V_1$.
We denote by $H$ the subgraph obtained from the subgraph $G[V_1]$ induced by $V_1$ by removing isolated vertices in it, if any.
Then $H$ is isomorphic to $P_2$ or $P_3$ or $P_2 \cup P_2$.
Suppose $n_1 \geq 5$.
Then, since each vertex in $V_2$ has indegree at most $2$, each vertex in $V_2$ has outdegree at least $n_1-2$. Therefore $v_1$ and $v_2$ have a common out-neighbor $u_i$ in $D$ for some $i \in \{1,\ldots, n_1\}$. Thus $v_1$ and $v_2$ are adjacent and $u_i$ is isolated in $G$.
Hence $G$ is isomorphic to $P_2 \cup I_{n-2}$ or $ P_2 \cup P_2 \cup I_{n-4}$ or $ P_3 \cup P_2 \cup I_{n-5}$ or $ P_2 \cup P_2 \cup P_2 \cup I_{n-6}$.

Now we suppose $n_1 \leq 4$.
If $H \cong P_2 \cup P_2$,
then $G[V_1] \cong P_2 \cup P_2$ and so $G \cong P_2 \cup P_2 \cup I_2$ since the two vertices in $V_2$ has no common out-neighbor.
Suppose $H \cong P_2$.
If $G[V_1]$ has two isolated vertices, then at least one of them is a common out-neighbor of $v_1$ and $v_2$ and so $G \cong P_2 \cup P_2 \cup I_2$.
If $G[V_1]$ has exactly one isolated vertex, then $G \cong P_2 \cup P_2 \cup I_1 $ or $G \cong P_2 \cup I_3$.
If $H \cong G[V_1]$, then $G \cong P_2 \cup I_2$.
Suppose $H \cong P_3$.
If $G[V_1]$ has an isolated vertex, then it must be a common out-neighbor of $v_1$ and $v_2$ and so $G \cong P_3 \cup P_2 \cup I_1$.
If $H \cong G[V_1]$, then
$G \cong P_3 \cup I_2$.

{\it Case 3}. $n_1 \leq 6$ and $n_2=3$.
Suppose, to the contrary, that $n_1 \geq 5$.
Since each vertex in $D$ has indegree at most $2$, each vertex in $V_2$ has outdegree at least $n_1-2$.
Since $n_1-2 > n_1 / 2$,
any pair of vertices in $V_2$ has a common out-neighbor in $V_1$.
Therefore the vertices in $V_2$ form a triangle, which is a contradiction.
Thus $n_1 =3$ or $n_1=4$.

{\it Subcase 3-1.} $n_1 =3$.
Then, since each vertex in $D$ has indegree at most $2$, \begin{equation} \label{eq:thm:charact-nonconnected-2-partite-1}d^+(v) \geq 1 \end{equation} for each vertex $v$ in $D$.
To reach a contradiction, we suppose that $G$ has at least three isolated vertices.
Then at least two isolated vertices belong to the same partite set in $D$.
Without loss of generality, we may assume that
$V_1$ has two isolated vertices $u_1$ and $u_2$.
Since $|V_1|=3$, $u_3$ is also isolated in $G$.
Then, since $|V_2|=3$, each vertex in $V_1$ has exactly one out-neighbor by~\eqref{eq:thm:charact-nonconnected-2-partite-1} and the out-neighbors of the vertices in $V_1$ are distinct.
Therefore any pair of the vertices in $V_2$ has a common out-neighbor in $V_1$, which implies that the vertices in $V_2$ form a triangle in $G$.
Thus $G$ has at most two isolated vertices.
Hence $G$ is isomorphic to $P_3 \cup P_3$ or $P_3 \cup P_2 \cup I_1$ or $P_2 \cup P_2 \cup I_2$.

{\it Subcase 3-2.} $n_1=4$.
Then, since each vertex in $D$ has indegree at most $2$, \begin{equation} \label{eq:thm:charact-nonconnected-2-partite-4}d^+(v) \geq 2 \end{equation} for each vertex $v$ in $V_2$.
We first suppose that there exists a vertex in $V_2$ which is isolated in $G$.
Without loss of generality, we may assume $v_1$ is an isolated vertex in $G$.
Then, since $n_1=4$,
$d^+(v_1)=2$ by~\eqref{eq:thm:charact-nonconnected-2-partite-4}.
Without loss of generality, we may assume $N^+(v_1)=\{u_1,u_2\}$.
Then, since $v_1$ is isolated in $G$, $N^+(v_2)=N^+(v_3)=\{u_3,u_4\}$ by~\eqref{eq:thm:charact-nonconnected-2-partite-4}.
Therefore $G[V_2]$ is isomorphic to $I_1 \cup P_2$ and $G[V_1]$ is isomorphic to $P_2 \cup P_2 $.
Thus $G$ is isomorphic to $I_1 \cup P_2 \cup P_2 \cup P_2$.

Now we suppose that each vertex in $V_2$ is not isolated in $G$.
Then $G[V_2]$ is isomorphic to $P_3$.
Without loss of generality, we may assume that
$G[V_2]$ is the path $v_1v_2v_3$.
Then $D$ contains a subdigraph isomorphic to $D'$ given in Figure~\ref{fig:subdigraph-4:3}.
\begin{figure}
\begin{center}
\begin{tikzpicture}[auto,thick, scale=1.2]
    \tikzstyle{player}=[minimum size=5pt,inner sep=0pt,outer sep=0pt,draw,circle]
     \tikzstyle{player2}=[minimum size=3pt,inner sep=0pt,outer sep=0pt,fill,color=black, circle]
    \tikzstyle{source}=[minimum size=5pt,inner sep=0pt,outer sep=0pt,ball color=black, circle]
    \tikzstyle{arc}=[minimum size=5pt,inner sep=1pt,outer sep=1pt, font=\footnotesize]
\tikzset{middlearrow/.style={decoration={
  markings,
  mark=at position 0.95 with
  {\arrow{#1}},
  },
  postaction={decorate}
  }
  }

      \draw (270:1cm) node (name) {$D'$};
    \path (1,-0.5)   node [player]  [label=right:$v_3$](u3) {};
    \path (1,0)    node [player] [label=right:$v_2$] (u2) {};
    \path (1,0.5)    node [player]  [label=right:$v_1$](u1) {};
    \path (-1,-0.5)     node [player][label=left:$u_4$]  (v4) {};
    \path (-1,-0)   node [player] [player][label=left:$u_3$] (v3) {};
  \path (-1,0.5)   node [player] [player][label=left:$u_2$] (v2) {};
  \path (-1,1)   node [player] [player][label=left:$u_1$] (v1) {};
    \draw[middlearrow={stealth}]  (u1) - + (v1);
   \draw[middlearrow={stealth}] (u2) - + (v1);
    \draw[middlearrow={stealth}] (u2) - + (v2);
    \draw[middlearrow={stealth}] (u3) - + (v2);
    \draw[middlearrow={stealth}] (v1) - + (u3);
    \draw[middlearrow={stealth}] (v2) - + (u1);
\end{tikzpicture}
\hspace{2em}
\begin{tikzpicture}[auto,thick, scale=1.2]
    \tikzstyle{player}=[minimum size=5pt,inner sep=0pt,outer sep=0pt,draw,circle]
    \tikzstyle{source}=[minimum size=5pt,inner sep=0pt,outer sep=0pt,ball color=black, circle]
    \tikzstyle{arc}=[minimum size=5pt,inner sep=1pt,outer sep=1pt, font=\footnotesize]
\tikzset{middlearrow/.style={decoration={
  markings,
  mark=at position 0.95 with
  {\arrow{#1}},
  },
  postaction={decorate}
  }
  }

    \draw (270:1cm) node (name) {$D''$};
    \path (1,-0.5)   node [player]  [label=right:$v_3$](u3) {};
    \path (1,0)    node [player] [label=right:$v_2$] (u2) {};
    \path (1,0.5)    node [player]  [label=right:$v_1$](u1) {};
    \path (-1,-0.5)     node [player][label=left:$u_4$]  (v4) {};
    \path (-1,0)   node [player] [player][label=left:$u_3$] (v3) {};
  \path (-1,0.5)   node [player] [player][label=left:$u_2$] (v2) {};
  \path (-1,1)   node [player] [player][label=left:$u_1$] (v1) {};

    \draw[middlearrow={stealth}] (u1) - + (v1);
   \draw[middlearrow={stealth}] (u2) - + (v1);
    \draw[middlearrow={stealth}] (u2) - + (v2);
    \draw[middlearrow={stealth}] (u3) - + (v2);
    \draw[middlearrow={stealth}] (v1) - + (u3);
    \draw[middlearrow={stealth}] (v2) - + (u1);

    \draw[middlearrow={stealth}] (u1) - + (v3);
    \draw[middlearrow={stealth}] (v3) - + (u3);
    \draw[middlearrow={stealth}] (v4) - + (u1);
    \draw[middlearrow={stealth}] (u3) - + (v4);
\end{tikzpicture}

\caption{ Digraphs $D'$ and $D''$ in the proof of Theorem~\ref{thm:charact-nonconnected-2-partite}.  }\label{fig:subdigraph-4:3}
\end{center}
\end{figure}
We may assume that $D'$ itself is a subdigraph of $D$.
Then, by~\eqref{eq:thm:charact-nonconnected-2-partite-4},
$N^+(v_1)\cap \{u_3,u_4\} \neq \emptyset $ and $N^+(v_3)\cap \{u_3,u_4\} \neq \emptyset $.
Since $v_1$ and $v_3$ are not adjacent in $G$, those intersections are disjoint.
We may assume that
 $N^+(v_1)\cap \{u_3,u_4\} =\{u_3\} $ and
 $N^+(v_3)\cap \{u_3,u_4\} =\{u_4\} $.
Then $D$ contains the subdigraph $D''$ given in Figure~\ref{fig:subdigraph-4:3}.
Then $v_1$ (resp.\ $v_3$) is a common out-neighbor of $u_2$ and $u_4$ (resp.\ $u_1$ and $u_3$).
If $v_2$ is a common out-neighbor of $u_3$ and $u_4$, then $G[V_1]$ is the path $u_1u_3u_4u_2$ and so $G$ is isomorphic to $P_4 \cup P_3$.
If $v_2$ is not a common out-neighbor of $u_3$ and $u_4$, then $G[V_1]$ is the union of two paths $u_1u_3$ and $u_2u_4$, and so $G$ is isomorphic to $P_3 \cup P_2 \cup P_2$.

{\it Case 4}. $n_1 =4$ and $n_2=4$.
Then $|A(D)|=16$.
Noting that $|V(D)|=8$ and each vertex has indegree at most $2$,
we have
\begin{equation}\label{eq:thm:charact-nonconnected-2-partite-2} d^-(v)=2 \end{equation}
for each vertex $v$ in $D$.
Then, for each vertex $v$ in $D$,
\begin{equation}\label{eq:thm:charact-nonconnected-2-partite-3}
d^+(v)=2 \end{equation}
since $v$ is adjacent to four vertices in $D$.

{\it Subcase 4-1.} $|E(G[V_1])| \geq 4$.
Then $|E(G[V_1])| = 4$ by~\eqref{eq:thm:charact-nonconnected-2-partite-2} and
$G[V_1]$ is isomorphic to $C_4$ since $G$ has no triangle.
Without loss of generality, we may assume
$G[V_1]=u_1u_2u_3u_4u_1$.
Without loss of generality, we may assume that
$N^-(v_1)=\{u_1,u_2\}$, $N^-(v_2)=\{u_2,u_3\}$, $N^-(v_3)=\{u_3,u_4\}$, and $N^-(v_4)=\{u_4,u_1\}$ by~\eqref{eq:thm:charact-nonconnected-2-partite-2}.
Therefore all arcs in $D$ are determined and so $G[V_2]$ is a $4$-cycle $v_1v_2v_3v_4v_1$.
Thus $G$ is isomorphic to $C_4\cup C_4$.

{\it Subcase 4-2.} $|E(G[V_1])|\leq 3$.
Since $|V_2|=4$, there exists a pair of vertices in $V_2$ which shares the same in-neighborhood by~\eqref{eq:thm:charact-nonconnected-2-partite-2}.
Without loss of generality, we may assume
$N^-(v_1)=N^-(v_2)=\{u_1,u_2\}$.
Then
$N^+(u_1)=N^+(u_2)=\{v_1,v_2\}$
by~\eqref{eq:thm:charact-nonconnected-2-partite-3}.
Therefore
$N^+(u_3)=N^+(u_4)=\{v_3,v_4\}$
by~\eqref{eq:thm:charact-nonconnected-2-partite-2} and~\eqref{eq:thm:charact-nonconnected-2-partite-3}.
Then $N^-(u_3)=N^-(u_4)=\{v_1,v_2\}$.
Thus it is easy to check that
$G$ is isomorphic to $P_2 \cup P_2 \cup P_2 \cup P_2$.
Hence we have shown that the ``only if" part is true.

To show the ``if" part, we fix a positive integer $k$.
Let $D_8$ be a bipartite tournament with the partite sets $\{u_1,\ldots,u_{k}\}$ and $\{v\}$, and the arc set
\[A(D_8)= \{(v,u_i) \mid 1\leq i \leq k\}\]
   (see the digraph $D_{8}$ given in Figure~\ref{fig:2-partite-tri-fre-non-connected} for an illustration). Then $C(D_8$) is an empty graph of order $k+1$.

Let $D_9$ be a bipartite tournament with the partite sets $\{u_1,\ldots,u_{k+1}\}$ and $\{v\}$, and the arc set
\[A(D_9)=\{(u_1,v),(u_{2},v)\}\cup \{(v,u_i) \mid 2 < i \leq k+1\}\]
   (see the digraph $D_{9}$ given in Figure~\ref{fig:2-partite-tri-fre-non-connected} for an illustration). Then $C(D_9$) is the path $u_1u_2$ with $k$ isolated vertices.

Let $D_{10}$ be a bipartite tournament with the partite sets $\{u_1,\ldots,u_{k+2}\}$ and $\{v_1,v_2\}$, and the arc set
\[A(D_{10})= \{(u_i,v_j) \mid  1\leq i,j\leq 2 \} \cup \{(v_i,u_j) \mid 1\leq i\leq 2, 3\leq j \leq k+2 \}\]
   (see the digraph $D_{10}$ given in Figure~\ref{fig:2-partite-tri-fre-non-connected} for an illustration). Then $C(D_{10}$) is the paths $u_1u_2$ and $v_1v_2$ with $k$ isolated vertices.

Let $D_{11}$ be a bipartite tournament with the partite sets $\{u_1,\ldots,u_{k+3}\}$ and $\{v_1,v_2\}$, and the arc set
\begin{align*}
A(D_{11})&=\{(u_1,v_1),(u_2,v_1),(u_2,v_2),(u_3,v_2),(v_1,u_3),
(v_2,u_1)\}
\\
&\ \ \ \ \ \cup \{(v_i,u_j) \mid 1\leq i \leq2, 4\leq j \leq k+3\}
\end{align*}
   (see the digraph $D_{11}$ given in Figure~\ref{fig:2-partite-tri-fre-non-connected} for an illustration). Then $C(D_{11}$) is the paths $u_1u_2u_3$ and $v_1v_2$ with $k$ isolated vertices.

   Let $D_{12}$ be a bipartite tournament with the partite sets $\{u_1,\ldots,u_{k+4}\}$ and $\{v_1,v_2\}$, and the arc set
   \begin{align*}
   A(D_{12})&= \{(u_i,v_1),(v_2,u_i) \mid i=1,2\}   \cup \{(u_i,v_2),(v_1,u_i) \mid i=3,4\}\\
&\ \ \ \ \ \cup \{(v_i,u_j) \mid 1\leq i \leq2, 5\leq j \leq k+4\}
\end{align*}
   (see the digraph $D_{12}$ given in Figure~\ref{fig:2-partite-tri-fre-non-connected} for an illustration). Then $C(D_{12}$) is the paths $u_1u_2, u_3u_4$, and $v_1v_2$ with $k$ isolated vertices.

   The competition graph of the digraph $D_{13}$ given in Figure~\ref{fig:2-partite-tri-fre-non-connected} is isomorphic to $P_3 \cup I_2$.
By Proposition~\ref{prop:2-partite-path}, there exists a bipartite tournament whose competition graph is isomorphic to $P_3 \cup P_3$.
By the way, bipartite tournaments whose competition graphs are isomorphic to $P_4 \cup P_3$ and $P_3 \cup P_2 \cup P_2$, respectively, were constructed in the subcase 3-2.
By Proposition~\ref{prop:2-partite-cycle}, there exists a bipartite tournament whose competition graph is isomorphic to $C_4 \cup C_4$.
It is easy to check that the competition graph of the bipartite tournament $D_{14}$ given in Figure~\ref{fig:2-partite-tri-fre-non-connected} is the disjoint union of the paths $u_1u_2,u_3u_4,v_1v_2$, and $v_3v_4$.
Hence we have shown that the ``if" part is true.
 \end{proof}
\begin{figure} 
\begin{center}
\begin{tikzpicture}[auto,thick, scale=1.2]
    \tikzstyle{player}=[minimum size=5pt,inner sep=0pt,outer sep=0pt,draw,circle]
     \tikzstyle{player2}=[minimum size=3pt,inner sep=0pt,outer sep=0pt,fill,color=black, circle]
    \tikzstyle{source}=[minimum size=5pt,inner sep=0pt,outer sep=0pt,ball color=black, circle]
    \tikzstyle{arc}=[minimum size=5pt,inner sep=1pt,outer sep=1pt, font=\footnotesize]

     \draw (270:1.5cm) node (name) {$D_{8}$};
       \path (0:1cm)   node [player]  (u1) [label=right:$v$] {};

    \path (130:1cm)   node [player]  (v1) [label=left:$u_1$] {};

    \path (150:1cm)   node [player2]  (v2) {};
    \path (165:1cm)   node [player2]  (v3) {};
    \path (180:1cm)   node [player2]  (v4) {};
    \path (200:1cm)   node [player]  (vk) [label=left:$u_{k}$]{};

   \draw[black,thick,-stealth] (u1) - + (v1);
   \draw[black,thick,-stealth] (u1) - + (vk);
\end{tikzpicture}
\hspace{2em}
\begin{tikzpicture}[auto,thick, scale=1.2]
    \tikzstyle{player}=[minimum size=5pt,inner sep=0pt,outer sep=0pt,draw,circle]
     \tikzstyle{player2}=[minimum size=3pt,inner sep=0pt,outer sep=0pt,fill,color=black, circle]
    \tikzstyle{source}=[minimum size=5pt,inner sep=0pt,outer sep=0pt,ball color=black, circle]
    \tikzstyle{arc}=[minimum size=5pt,inner sep=1pt,outer sep=1pt, font=\footnotesize]

     \draw (270:1.5cm) node (name) {$D_{9}$};
       \path (0:1cm)   node [player]  (u1) [label=right:$v$] {};

    \path (120:1cm)   node [player]  (v1) [label=left:$u_1$] {};

    \path (150:1cm)   node [player]  (v2)  [label=left:$u_{2}$]{};
    \path (170:1cm)   node [player2]  (v3) {};
    \path (185:1cm)   node [player2]  (v4) {};
    \path (200:1cm)   node [player2]  (vk) {};
    \path (220:1cm)   node [player]  (vk+1)  [label=left:$u_{k+1}$] {};
   \draw[black,thick,-stealth] (v1) - + (u1);
   \draw[black,thick,-stealth] (v2) - + (u1);
   \draw[black,thick,-stealth] (u1) - + (vk+1);
\end{tikzpicture}
\hspace{2em}
\begin{tikzpicture}[auto,thick, scale=1.2]
    \tikzstyle{player}=[minimum size=5pt,inner sep=0pt,outer sep=0pt,draw,circle]

     \tikzstyle{player2}=[minimum size=3pt,inner sep=0pt,outer sep=0pt,fill,color=black, circle]
    \tikzstyle{source}=[minimum size=5pt,inner sep=0pt,outer sep=0pt,ball color=black, circle]
    \tikzstyle{arc}=[minimum size=5pt,inner sep=1pt,outer sep=1pt, font=\footnotesize]

   \tikzset{middlearrow/.style={decoration={
  markings,
  mark=at position 0.95 with
  {\arrow{#1}},
  },
  postaction={decorate}
  }
  }

     \draw (270:1.5cm) node (name) {$D_{10}$};
       \path (0:1cm)   node [player]  (u1) [label=right:$v_1$] {};
    \path (315:1cm)   node [player] [label=right:$v_2$] (u2) {};

    \path (120:1cm)   node [player]  (v1) [label=left:$u_1$] {};

    \path (150:1cm)   node [player]  (v2)  [label=left:$u_2$]{};
    \path (180:1cm)   node [player]  (v3)  [label=left:$u_3$]{};

       \path (240:1cm)   node [player]  (vk+1)  [label=left:$u_{k+2}$] {};

    \path (195:1cm)   node [player2]  (v4){};
    \path (210:1cm)   node [player2]  (v5) {};
    \path (225:1cm)   node [player2]  (v6) {};

   \draw[middlearrow={stealth}] (v1) - + (u1);
  \draw[middlearrow={stealth}] (v2) - + (u1);
  \draw[middlearrow={stealth}] (v1) - + (u2);
 \draw[middlearrow={stealth}] (v2) - + (u2);
   \draw[black,thick,-stealth] (u1) - + (v3);
   \draw[black,thick,-stealth] (u2) - + (v3);
   \draw[black,thick,-stealth] (u1) - + (vk+1);
   \draw[black,thick,-stealth] (u2) - + (vk+1);
\end{tikzpicture}

\vspace{3em}
\begin{tikzpicture}[auto,thick, scale=1.2]
    \tikzstyle{player}=[minimum size=5pt,inner sep=0pt,outer sep=0pt,draw,circle]
     \tikzstyle{player2}=[minimum size=3pt,inner sep=0pt,outer sep=0pt,fill,color=black, circle]
    \tikzstyle{source}=[minimum size=5pt,inner sep=0pt,outer sep=0pt,ball color=black, circle]
    \tikzstyle{arc}=[minimum size=5pt,inner sep=1pt,outer sep=1pt, font=\footnotesize]

   \tikzset{middlearrow/.style={decoration={
  markings,
  mark=at position 0.95 with
  {\arrow{#1}},
  },
  postaction={decorate}
  }
  }

     \draw (270:1.5cm) node (name) {$D_{11}$};
    \path (0:1cm)   node [player] (u1)[label=right:$v_1$] {};
    \path (315:1cm)  node [player] (u2)[label=right:$v_2$] {};

    \path (90:1cm)   node [player]  (v1) [label=above:$u_1$] {};
    \path (120:1cm)   node [player]  (v2)  [label=left:$u_2$]{};
    \path (150:1cm)   node [player]  (v3)  [label=left:$u_3$]{};
    \path (180:1cm)   node [player]  (v4)  [label=left:$u_4$]{};
    \path (195:1cm)   node [player2]  (v5) {};
    \path (210:1cm)   node [player2]  (v6) {};
    \path (225:1cm)   node [player2]  (v7) {};
    \path (240:1cm)   node [player]  (vk+1) [label=left:$u_{k+3}$] {};
   \draw[middlearrow={stealth}] (v1) - + (u1);
   \draw[middlearrow={stealth}] (v2) - + (u1);
   \draw[black,thick,-stealth] (u1) - + (v3);

\draw[black,thick,-stealth] (u2) - + (v1);
\draw[middlearrow={stealth}] (v2) - + (u2);
\draw[middlearrow={stealth}] (v3) - + (u2);
\draw[black,thick,-stealth] (u1) - + (vk+1);
\draw[black,thick,-stealth] (u2) - + (vk+1);
\draw[black,thick,-stealth] (u1) - + (v4);
\draw[black,thick,-stealth] (u2) - + (v4);
\end{tikzpicture}
\hspace{2em}
\begin{tikzpicture}[auto,thick, scale=1.2]
    \tikzstyle{player}=[minimum size=5pt,inner sep=0pt,outer sep=0pt,draw,circle]
     \tikzstyle{player2}=[minimum size=3pt,inner sep=0pt,outer sep=0pt,fill,color=black, circle]
    \tikzstyle{source}=[minimum size=5pt,inner sep=0pt,outer sep=0pt,ball color=black, circle]
    \tikzstyle{arc}=[minimum size=5pt,inner sep=1pt,outer sep=1pt, font=\footnotesize]
     \draw (270:1.5cm) node (name) {$D_{12}$};
     \tikzset{middlearrow/.style={decoration={
  markings,
  mark=at position 0.95 with
  {\arrow{#1}},
  },
  postaction={decorate}
  }
  }
       \path (5:1cm)   node [player]  (u1) [label=right:$v_1$] {};
    \path (330:1cm)   node [player] [label=right:$v_2$] (u2) {};

    \path (90:1cm)   node [player]  (v1) [label=above:$u_1$] {};

    \path (120:1cm)   node [player]  (v2)  [label=above:$u_2$]{};
    \path (150:1cm)   node [player]  (v3)  [label=left:$u_3$]{};
    \path (180:1cm)   node [player]  [label=left:$u_4$]  (v4){};
    \path (210:1cm)   node [player]
    [label=left:$u_5$] (v5) {};
    \path (225:1cm)   node [player2]  (v6) {};
    \path (240:1cm)   node [player2]  (v7) {};
    \path (255:1cm)   node [player2]  (v8) {};
    \path (270:1cm)   node [player]
    [label=right:$u_{k+4}$] (vk+1) {};
   \draw[middlearrow={stealth},thick] (v1) - + (u1);
   \draw[middlearrow={stealth}] (v2) - + (u1);
   \draw[middlearrow={stealth}] (v3) - + (u2);
   \draw[middlearrow={stealth}] (v4) - + (u2);
   \draw[black,thick,-stealth] (u1) - + (v5);
   \draw[black,thick,-stealth] (u2) - + (v5);
 \draw[black,thick,-stealth] (u1) - + (vk+1);
   \draw[black,thick,-stealth] (u2) - + (vk+1);
\draw[black,thick,-stealth] (u2) - + (v1);
\draw[black,thick,-stealth] (u2) - + (v2);
\draw[black,thick,-stealth] (u1) - + (v3);
\draw[black,thick,-stealth] (u1) - + (v4);
\end{tikzpicture}
\hspace{2em}
\begin{tikzpicture}[auto,thick, scale=1.2]
    \tikzstyle{player}=[minimum size=5pt,inner sep=0pt,outer sep=0pt,draw,circle]
     \tikzstyle{player2}=[minimum size=3pt,inner sep=0pt,outer sep=0pt,fill,color=black, circle]
    \tikzstyle{source}=[minimum size=5pt,inner sep=0pt,outer sep=0pt,ball color=black, circle]
    \tikzstyle{arc}=[minimum size=5pt,inner sep=1pt,outer sep=1pt, font=\footnotesize]

     \tikzset{middlearrow/.style={decoration={
  markings,
  mark=at position 0.95 with
  {\arrow{#1}},
  },
  postaction={decorate}
  }
  }
     \draw (270:1.5cm) node (name) {$D_{13}$};
       \path (0:1cm)   node [player]  (u1) [label=right:$v_1$] {};

    \path (150:1cm)   node [player]  (v1) [label=left:$u_1$] {};

    \path (180:1cm)   node [player] (v2) [label=left:$u_2$]  {};
    \path (210:1cm)   node [player] (v3) [label=left:$u_3$] {};
    \path (330:1cm)   node [player] (u2) [label=right:$v_2$]   {};

\draw[middlearrow={stealth}]  (v1) - + (u1);
\draw[middlearrow={stealth}]  (v2) - + (u1);
\draw[middlearrow={stealth}]  (v2) - + (u2);
\draw[middlearrow={stealth}]  (v3) - + (u2);
\draw[black,thick,-stealth] (u1) - + (v3);
\draw[black,thick,-stealth] (u2) - + (v1);
\end{tikzpicture}

\vspace{3em}
\begin{tikzpicture}[auto,thick, scale=1.5]
    \tikzstyle{player}=[minimum size=5pt,inner sep=0pt,outer sep=0pt,draw,circle]
     \tikzstyle{player2}=[minimum size=3pt,inner sep=0pt,outer sep=0pt,fill,color=black, circle]
    \tikzstyle{source}=[minimum size=5pt,inner sep=0pt,outer sep=0pt,ball color=black, circle]
    \tikzstyle{arc}=[minimum size=5pt,inner sep=1pt,outer sep=1pt, font=\footnotesize]
\tikzset{middlearrow/.style={decoration={
  markings,
  mark=at position 0.95 with
  {\arrow{#1}},
  },
  postaction={decorate}
  }
  }
     \draw (0,-0.5) node (name) {$D_{14}$};
    \path (1,1.5)   node [player] (u1)[label=right:$v_1$] {};
    \path (1,1)  node [player] (u2)[label=right:$v_2$] {};
    \path (1,0.5)   node [player]  [label=right:$v_3$](u3) {};
    \path (1,0)   node [player]      [label=right:$v_4$](u4) {};

    \path (-1,1.5)   node [player]  (v1) [label=left:$u_1$] {};
    \path (-1,1)   node [player]  (v2)  [label=left:$u_2$]{};
    \path (-1,0.5)   node [player]  (v3)  [label=left:$u_3$]{};
    \path (-1,0)   node [player]      [label=left:$u_4$](v4) {};

   \draw[middlearrow={stealth}] (v1) - + (u1);
   \draw[middlearrow={stealth}] (v2) - + (u1);
   \draw[middlearrow={stealth}] (v1) - + (u2);
   \draw[middlearrow={stealth}] (v2) - + (u2);

    \draw[middlearrow={stealth}] (v3) - + (u3);
   \draw[middlearrow={stealth}] (v3) - + (u4);
   \draw[middlearrow={stealth}](v4) - + (u3);
   \draw[middlearrow={stealth}] (v4) - + (u4);

       \draw[middlearrow={stealth}] (u1) - + (v3);
   \draw[middlearrow={stealth}] (u1) - + (v4);
   \draw[middlearrow={stealth}] (u2) - + (v3);
   \draw[middlearrow={stealth}] (u2) - + (v4);

      \draw[middlearrow={stealth}] (u3) - + (v1);
   \draw[middlearrow={stealth}] (u4) - + (v1);
   \draw[middlearrow={stealth}] (u3) - + (v2);
   \draw[middlearrow={stealth}] (u4) - + (v2);
\end{tikzpicture}
\caption{ Bipartite tournaments in the proof of Theorem~\ref{thm:charact-nonconnected-2-partite} }\label{fig:2-partite-tri-fre-non-connected}
\end{center}
\end{figure}
\subsection{$k$-partite tournaments for $k \geq 3$}
By Lemmas~\ref{lem:connected-k-condition} and \ref{lem:triangle-free-5-partite-tournament},
the following lemma is true.

\begin{Lem} \label{lem:disconect-condition}
If the competition graph of a $k$-partite tournament is triangle-free and disconnected for some positive integer $k \geq 3$,
then $k=3$ or $k=4$.
\end{Lem}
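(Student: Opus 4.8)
The plan is to combine the two cited lemmas in an elementary way, ruling out all values of $k$ outside $\{3,4\}$. Suppose $D$ is a $k$-partite tournament with $k \geq 3$ whose competition graph $C(D)$ is triangle-free and disconnected. First I would invoke Lemma~\ref{lem:connected-k-condition}: since $C(D)$ is triangle-free, $D$ cannot be a $k$-partite tournament for any $k \geq 6$, because every such competition graph contains a triangle. Hence $k \leq 5$, so $k \in \{3,4,5\}$.

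Next I would eliminate the case $k=5$. By Lemma~\ref{lem:triangle-free-5-partite-tournament}, if the competition graph of a $5$-partite tournament is triangle-free, then that competition graph is isomorphic to the cycle $C_5$, which is connected. This contradicts the assumption that $C(D)$ is disconnected, so $k \neq 5$. Therefore $k = 3$ or $k = 4$, as claimed.

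Since every step is a direct appeal to an already-established lemma, I do not anticipate any real obstacle; the only care needed is to note explicitly that $C_5$ is connected so that the $k=5$ case is genuinely excluded by the disconnectedness hypothesis, and to record at the outset that the hypothesis $k \geq 3$ is what leaves exactly $\{3,4,5\}$ after Lemma~\ref{lem:connected-k-condition} is applied. The argument is essentially a one-paragraph deduction, which matches the remark in the text that the lemma "is true" by the two preceding results.
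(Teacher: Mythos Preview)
Your proposal is correct and matches the paper's own reasoning exactly: the paper simply states that the lemma follows from Lemmas~\ref{lem:connected-k-condition} and~\ref{lem:triangle-free-5-partite-tournament}, and your argument spells out precisely this deduction, including the observation that $C_5$ is connected so the disconnectedness hypothesis rules out $k=5$.
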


By Lemma~\ref{lem:disconect-condition},
it is sufficient to consider tripartite tournaments and $4$-partite tournaments for studying disconnected triangle-free competition graphs of multipartite tournaments.

\begin{Lem} \label{lem:condition-outdegree-traingle-free}
Let $D$ be a multipartite tournament whose competition graph is triangle-free.
Suppose that a vertex $v$ is contained in a partite set $X$ of $D$. Then $|V(D)|- |X| -2 \leq d^+(v)$.
\end{Lem}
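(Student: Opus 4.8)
The plan is to bound the indegree of $v$ and then use the fact that, in a complete multipartite graph, $v$ is adjacent to every vertex outside its own partite set.

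First I would record the basic degree identity. Since $D$ is an orientation of a complete multipartite graph and $v \in X$, the vertex $v$ is joined by an arc (in exactly one direction) to every vertex of $V(D) \setminus X$ and to no vertex of $X \setminus \{v\}$. Hence
\[
d^+(v) + d^-(v) = |V(D)| - |X|.
\]

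Next I would show $d^-(v) \leq 2$. This is the same observation already used in the proof of Lemma~\ref{lem:number-edges-triangle-free}: if $v$ had three distinct in-neighbors $a$, $b$, $c$, then each of the pairs $\{a,b\}$, $\{b,c\}$, $\{a,c\}$ would have $v$ as a common out-neighbor, so $a$, $b$, $c$ would form a triangle in $C(D)$, contradicting triangle-freeness. Combining this with the displayed identity gives
\[
d^+(v) = |V(D)| - |X| - d^-(v) \geq |V(D)| - |X| - 2,
\]
which is the claimed inequality.

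There is essentially no obstacle here; the only thing to be careful about is the first step, namely that the arcs at $v$ account for exactly the vertices outside $X$ (no $2$-cycles, no arcs within $X$), which is immediate from the definition of an orientation of a complete $k$-partite graph.
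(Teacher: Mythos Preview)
Your proof is correct and follows essentially the same argument as the paper: use triangle-freeness to get $d^-(v)\le 2$, use the multipartite-tournament structure to get $d^+(v)+d^-(v)=|V(D)|-|X|$, and combine. The paper's proof is just a terser version of what you wrote.
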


\begin{proof}
Since $C(D)$ is triangle-free,
$d^-(v) \leq 2$.
Then, since $D$ is a multipartite tournament, $d^-(v) = |V(D)|-|X|-d^+(v)$ and so
$|V(D)|- |X| -2 \leq d^+(v)$.
\end{proof}

The following is immediately true by Lemma~\ref{lem:condition-outdegree-traingle-free}.
\begin{Cor} \label{cor:out-degree-1-4-partite}
If the competition graph of a $4$-partite tournament $D$ is triangle-free, then each vertex has outdegree at least $1$ in $D$.
\end{Cor}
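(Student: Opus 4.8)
The plan is to derive this as an immediate numerical consequence of Lemma~\ref{lem:condition-outdegree-traingle-free}. Let $D$ be a $4$-partite tournament whose competition graph is triangle-free, and let $v$ be an arbitrary vertex of $D$, lying in the partite set $X$. By Lemma~\ref{lem:condition-outdegree-traingle-free} we have $|V(D)| - |X| - 2 \leq d^+(v)$, so it suffices to check that $|V(D)| - |X| \geq 3$.

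First I would observe that, by definition, a $4$-partite tournament is an orientation of a complete $4$-partite graph, so $D$ has four partite sets, each of which is nonempty. Writing $V_1, V_2, V_3, V_4$ for the partite sets with, say, $X = V_1$, the quantity $|V(D)| - |X| = |V_2| + |V_3| + |V_4|$ is a sum of three positive integers, hence at least $3$. Combining this with the inequality from Lemma~\ref{lem:condition-outdegree-traingle-free} yields $d^+(v) \geq 3 - 2 = 1$, and since $v$ was arbitrary, every vertex of $D$ has outdegree at least $1$.

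There is essentially no obstacle here: the content is entirely carried by Lemma~\ref{lem:condition-outdegree-traingle-free}, which in turn packages the triangle-free hypothesis (each indegree is at most $2$) together with the multipartite-tournament identity $d^-(v) = |V(D)| - |X| - d^+(v)$. The only point that needs to be stated rather than proved is that all four partite sets are nonempty, which is part of the definition of a $4$-partite tournament; consequently the displayed bound collapses to $d^+(v)\geq 1$ with no case analysis required.
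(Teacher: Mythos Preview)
Your proof is correct and is exactly the argument the paper intends: it states that the corollary ``is immediately true by Lemma~\ref{lem:condition-outdegree-traingle-free},'' and you have merely spelled out the one-line computation $|V(D)|-|X|\ge 3$ that makes this immediate.
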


\begin{Lem} \label{lem:condition-of-indegree-1}
Let $D$ be a multipartite tournament whose competition graph is triangle-free.
If $m$ is the number of vertices of indegree $1$ in $D$, then $2|V(D)| - |A(D)|  \geq m $.
\end{Lem}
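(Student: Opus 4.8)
The plan is to prove the inequality by a straightforward double counting of arcs, organized according to the indegree of each vertex. First I would recall that, because $C(D)$ is triangle-free, every vertex of $D$ has indegree at most $2$: if some vertex $w$ had three in-neighbors, those three in-neighbors would pairwise share $w$ as a common out-neighbor and hence form a triangle in $C(D)$. This is exactly the observation already exploited in Lemma~\ref{lem:number-edges-triangle-free}, so I would simply invoke it.

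Next I would partition $V(D)$ into the sets $S_0$, $S_1$, $S_2$ of vertices of indegree $0$, $1$, and $2$ respectively, and write $n_i = |S_i|$; note $n_1 = m$ by definition. Then the two basic identities are $|V(D)| = n_0 + n_1 + n_2$ and, summing indegrees, $|A(D)| = \sum_{v \in V(D)} d^-(v) = n_1 + 2 n_2$. Combining these gives
\[
2|V(D)| - |A(D)| = 2(n_0 + n_1 + n_2) - (n_1 + 2n_2) = 2 n_0 + n_1 \geq n_1 = m,
\]
which is the claimed bound.

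There is essentially no obstacle here: the only thing to be careful about is the indegree-at-most-$2$ step, which is precisely what forces $n_i$ to be defined only for $i \in \{0,1,2\}$ and hence makes the coefficient of $n_2$ in $|A(D)|$ equal to $2$ rather than something larger; without triangle-freeness the final inequality would fail. Everything else is an elementary arithmetic rearrangement, so the write-up will be short.
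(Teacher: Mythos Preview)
Your proof is correct and follows essentially the same approach as the paper: both arguments use that triangle-freeness forces $d^-(v)\le 2$ and then bound $|A(D)|=\sum_v d^-(v)$ accordingly. The only cosmetic difference is that you partition explicitly into $S_0,S_1,S_2$ and compute $2|V(D)|-|A(D)|=2n_0+n_1$, whereas the paper writes the single line $|A(D)|\le 2(|V(D)|-m)+m=2|V(D)|-m$; the content is identical.
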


\begin{proof}
Let $m$ be the number of vertices of indegree $1$ in $D$.
Since $C(D)$ is triangle-free,
each vertex has indegree at most $2$.
Therefore
\[|A(D)|=\sum_{v\in V(D)} d^-(v) \leq 2(|V(D)|-m)+m  =2|V(D)|-m.  \]
\end{proof}

Now we are ready to introduce one of our main theorems.
\begin{Thm} \label{thm:charact-nonconnected-4-partite}
Let $G$ be a disconnected and triangle-free graph.
Then $G$ is the competition graph of a $4$-partite tournament if and only if $G$ is isomorphic to $P_3 \cup P_2$ or  $ P_3 \cup I_1$.
\end{Thm}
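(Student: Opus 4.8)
The proof splits according to Lemma~\ref{lem:condition-of-sizes-4-partite sets}: a $4$-partite tournament $D$ with triangle-free competition graph must be an orientation of $K_{1,1,1,1}$ or of $K_{2,1,1,1}$, so for the ``only if'' direction it suffices to determine, in each of these two cases, which disconnected triangle-free graphs arise as $C(D)$; the ``if'' direction is then handled by two explicit constructions. Throughout, triangle-freeness of $C(D)$ forces $d^-(v)\le 2$ for every $v$, and one checks easily that $|E(C(D))|$ equals the number of distinct in-neighborhoods among the vertices of indegree $2$.

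Suppose first $D$ is an orientation of $K_{1,1,1,1}$, so $|V(D)|=4$ and $|A(D)|=6$. If $k$ is the number of vertices of indegree $2$, then $6=\sum_v d^-(v)\le 2k+(4-k)=k+4$, so $k\ge 2$. Since $D$ is a tournament, no two vertices share a partite set, so by Lemma~\ref{lem:same-out-in-neighbor-same-partite} the indegree-$2$ vertices have pairwise distinct in-neighborhoods, whence $|E(C(D))|=k\ge 2$. But a disconnected triangle-free graph on four vertices has at most two edges, so $|E(C(D))|=2$ and $C(D)$ is $P_3\cup I_1$ or $P_2\cup P_2$. The latter is excluded by a one-line check: if its edges are $\{a,b\}$ and $\{c,d\}$, then up to symmetry $c$ is a common out-neighbor of $a$ and $b$, so $a\to c$ and $b\to c$, whence $c$ beats neither $a$ nor $b$ and so $c,d$ have no common out-neighbor, a contradiction. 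Therefore $C(D)\cong P_3\cup I_1$.

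Now suppose $D$ is an orientation of $K_{2,1,1,1}$, so $|V(D)|=5$ and $|A(D)|=9$. By Lemma~\ref{lem:condition-of-indegree-1} at most one vertex has indegree $1$, and since $\sum_v d^-(v)=9$ is odd with all terms at most $2$, the indegree sequence must be $(2,2,2,2,1)$; also $d^+(v)\ge 1$ for all $v$ by Corollary~\ref{cor:out-degree-1-4-partite}. By Lemma~\ref{lem:same-out-in-neighbor-same-partite}, two of the four indegree-$2$ vertices can share an in-neighborhood only if both lie in the size-$2$ part $V_1$, and then $V_1$ is a $P_2$-component of $C(D)$; as $V_1$ is the only part of size $\ge 2$, at most one such coincidence occurs, so $|E(C(D))|\in\{3,4\}$. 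If $|E(C(D))|=3$, the coinciding pair is exactly $V_1$, which is then a $P_2$-component of $C(D)$; the remaining two edges therefore lie among the other three vertices, forming a triangle-free $3$-vertex graph with two edges, i.e.\ a $P_3$, so $C(D)\cong P_3\cup P_2$. If $|E(C(D))|=4$, then $C(D)$ is a disconnected triangle-free graph on five vertices with four edges, which forces $C(D)\cong C_4\cup I_1$, and it remains to rule this out.

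Ruling out $C_4\cup I_1$ is the main obstacle. A vertex of the $4$-cycle has degree $2$ in $C(D)$, and its two incident edges must come from two distinct common out-neighbors (a single one would have indegree $\ge 3$); hence every cycle-vertex has outdegree at least $2$. But a vertex $v$ of $V_1$ satisfies $d^+(v)=3-d^-(v)\le 2$, with equality only if $d^-(v)=1$, i.e.\ only for the unique indegree-$1$ vertex $v_0$. Thus at most one cycle-vertex lies in $V_1$; since $|V_1|=2$ and the only non-cycle-vertex is the isolated vertex $e$, we must have $e\in V_1$, and since a non-$v_0$ cycle-vertex has outdegree $3-2=1<2$ it cannot be the second element of $V_1$, so $V_1=\{e,v_0\}$ with $e\ne v_0$. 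Then $d^-(e)=2$, so $d^+(e)=1$; its unique out-neighbor $f$ is a cycle-vertex distinct from $v_0$, hence $d^-(f)=2$, yet $e$ being isolated in $C(D)$ means no vertex other than $e$ sends an arc to $f$, so $d^-(f)\le 1$, a contradiction. Finally, for the ``if'' direction I would verify that the $4$-tournament given by the directed cycle $v_1v_2v_3v_4v_1$ together with the arcs $v_3\to v_1$ and $v_4\to v_2$ has competition graph $P_3\cup I_1$, and that the orientation of $K_{2,1,1,1}$ with parts $\{x_1,x_2\},\{y\},\{z\},\{w\}$ in which $x_1,x_2$ beat $y$, $y$ beats $z$ and $w$, $z$ beats $w$, and each of $z,w$ beats $x_1$ and $x_2$ has competition graph $P_3\cup P_2$.
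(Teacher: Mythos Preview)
Your proof is correct, and the overall architecture matches the paper's: reduce via Lemma~\ref{lem:condition-of-sizes-4-partite sets} to $K_{1,1,1,1}$ and $K_{2,1,1,1}$, analyze each case, then exhibit explicit tournaments for the ``if'' direction. The details, however, are organized differently.

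In the $K_{1,1,1,1}$ case the paper invokes Proposition~\ref{prop:minimum-number-of-edges} (Fisher's lower bound) to get $|E(C(D))|\ge 2$, whereas you obtain this internally from the indegree count together with Lemma~\ref{lem:same-out-in-neighbor-same-partite}; both then eliminate $P_2\cup P_2$ by the same elementary argument. Your route is a little more self-contained here.

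In the $K_{2,1,1,1}$ case the two arguments diverge more noticeably. The paper bounds the number of isolated vertices: using Corollary~\ref{cor:out-degree-1-4-partite} and Lemma~\ref{lem:condition-of-indegree-1} it shows there is at most one isolated vertex, then runs a direct contradiction to exclude exactly one, concluding that $C(D)$ has no isolated vertices and hence splits as a $2$-vertex and a $3$-vertex component, forcing $P_3\cup P_2$. You instead pin down the indegree sequence $(2,2,2,2,1)$, use Lemma~\ref{lem:same-out-in-neighbor-same-partite} to show that at most one pair of indegree-$2$ vertices can share an in-neighborhood (and only if both lie in $V_1$), giving $|E(C(D))|\in\{3,4\}$, and then handle each value: three edges forces the $P_2$-component on $V_1$ with a $P_3$ on the rest, while four edges would force $C_4\cup I_1$, which you rule out by the outdegree-$\ge 2$ requirement for cycle vertices. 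Both approaches are clean; the paper's isolated-vertex viewpoint is perhaps more conceptual, while your edge-count viewpoint makes the exhaustive case analysis more visible and avoids the somewhat ad hoc contradiction for ``exactly one isolated vertex''.
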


\begin{proof}
We first show the ``only if" part.
Let $D$ be an orientation of $K_{n_1,n_2,n_3,n_4}$ with $n_1 \geq \cdots \geq n_4$ whose competition graph $C(D)$ is disconnected and triangle-free.
Let $V_1,\ldots,V_4$ be the partite sets of $D$ with $|V_i|=n_i$ for each $1\leq i \leq 4$.
Then $n_1 \leq 2$ and $n_2=n_3=n_4=1$ by Lemma~\ref{lem:condition-of-sizes-4-partite sets}.

{\it Case 1}. $n_1=2$.
  Then $|A(D)|=9$.
   Therefore $|E(C(D))| \leq 4$ by Lemma~\ref{lem:number-edges-triangle-free}.
   Let $l$ and $m$ be the number of isolated vertices in $C(D)$ and the number of vertices of indegree $1$ in $D$, respectively.
   By Corollary~\ref{cor:out-degree-1-4-partite},
   each vertex has outdegree at least $1$, so each isolated vertex in $C(D)$ has an out-neighbor
  in $D$.
  Yet, since each out-neighbor of an isolated vertex has indegree $1$,
   $l \leq m$.
   By Lemma~\ref{lem:condition-of-indegree-1},
   $m \leq 1$.
   Therefore $l \leq 1.$

   Suppose, to the contrary, that $l=1$.
    Then $m=1$.
   Let $w$ be the isolated vertex in $C(D)$.
   Since each vertex in $N^+(w)$ has indegree $1$, $d^+(w) \leq 1$.
   Since each vertex has outdegree at least $1$, $d^+(w)=1$.
   Since $C(D)$ is triangle-free, $d^-(w) \leq 2$ and so $w  \in V_1$.
   Let $V_1=\{v_1,w\}$, $V_2=\{v_2\}$, $V_3=\{v_3\}$ and $V_4=\{v_4\}$.
   Without loss of generality,
   we may assume $N^+(w)=\{v_2\}$.
   Then \[N^-(w)=\{v_3,v_4\}.\]
   Since $w$ is an isolated vertex in $C(D)$,
    $N^-(v_2)=\{w\}$ and so $N^+(v_2)=\{v_1,v_3,v_4\}$.
   Without loss of generality,
   we may assume $(v_3,v_4)\in A(D)$.
   Then, since $d^-(v_4) \leq 2$, \[N^-(v_4)=\{v_2,v_3\},\] and so
   $(v_4,v_1)\in A(D)$.
   Therefore \[N^-(v_1)=\{v_2,v_4\}.\] Thus $\{v_2,v_3,v_4\}$ forms a triangle in  $C(D)$, which is a contradiction.
    Hence $l=0$.
   Since $C(D)$ is disconnected and $|V(D)|=5$,
   $C(D)$ has two components each of which has $2$ and $3$ vertices, respectively.
   Then, one of the components must be $P_2$.
   On the other hand, since $C(D)$ is triangle-free, the other component is isomorphic to $P_3$.
  Therefore
   $C(D)$ is isomorphic to $P_3 \cup P_2$.

   {\it Case 2}. $n_1=1$.
   Then $D$ is an orientation of $K_{1,1,1,1}$, which is a tournament.
   By Proposition~\ref{prop:minimum-number-of-edges},
   $|E(C(D))| \geq 2$.
 By the way, since $|A(D)|= 6$,
   $|E(C(D))| \leq 3$ by Lemma~\ref{lem:number-edges-triangle-free}.
   Therefore $|E(C(D))|=2$ or $3$.
   Thus $C(D)$ has exactly two components and so is isomorphic to $I_1 \cup P_3$ or $P_2 \cup P_2$.
   If $C(D)$ is isomorphic to $P_2 \cup P_2$,
   then $D$ has two vertices $a$ and $b$ such that $d^-(a)=d^-(b)=2$ and $N^-(a) \cap N^-(b) = \emptyset$, which is impossible for a digraph of order four.
   Therefore $C(D)$ is isomorphic to $I_1 \cup P_3$.

   To show the ``if" part, we consider the $4$-partite tournaments $D_{15}$ and $D_{16}$ given in Figure~\ref{fig:n=4:tri-fre-non-connected}.
   It is easy to check that $C(D_{15}) \cong P_2 \cup P_3$, and $C(D_{16}) \cong I_1 \cup P_3$.
Therefore we have shown that the ``if" part is true.
\end{proof}

\begin{figure}
\begin{center}
\begin{tikzpicture}[auto,thick, scale=1.2]
    \tikzstyle{player}=[minimum size=5pt,inner sep=0pt,outer sep=0pt,draw,circle]
    \tikzstyle{source}=[minimum size=5pt,inner sep=0pt,outer sep=0pt,ball color=black, circle]
    \tikzstyle{arc}=[minimum size=5pt,inner sep=1pt,outer sep=1pt, font=\footnotesize]

    \draw (270:1.5cm) node (name) {$D_{15}$};
    \path (0:1cm)   node [player]  (x1) {};
    \path (72:1cm)   node [player]  (y1) {};
    \path (144:1cm)   node [player]  (y2) {};
    \path (216:1cm)    node [player]  (z1) {};
    \path (288:1cm)   node [player]  (z2) {};

    \draw[black,thick,-stealth] (x1) - + (z1);
    \draw[black,thick,-stealth] (x1) - + (z2);
    \draw[black,thick,-stealth] (x1) - + (y1);

    \draw[black,thick,-stealth] (y1) - + (z1);
    \draw[black,thick,-stealth] (y1) - + (z2);

    \draw[black,thick,-stealth] (y2) - + (x1);
    \draw[black,thick,-stealth] (y2) - + (y1);

    \draw[black,thick,-stealth] (z1) - + (y2);
    \draw[black,thick,-stealth] (z2) - + (y2);
\end{tikzpicture}
\hspace{5em}
\begin{tikzpicture}[auto,thick, scale=1.2]
    \tikzstyle{player}=[minimum size=5pt,inner sep=0pt,outer sep=0pt,draw,circle]
    \tikzstyle{source}=[minimum size=5pt,inner sep=0pt,outer sep=0pt,ball color=black, circle]
    \tikzstyle{arc}=[minimum size=5pt,inner sep=1pt,outer sep=1pt, font=\footnotesize]

    \draw (270:1.5cm) node (name) {$D_{16}$};
    \path (0:1cm)   node [player]  (x) {};
    \path (90:1cm)   node [player]  (y) {};
    \path (180:1cm)   node [player]  (z) {};
    \path (270:1cm)    node [player]  (w) {};

    \draw[black,thick,-stealth] (x) - + (z);
    \draw[black,thick,-stealth] (x) - + (w);
    \draw[black,thick,-stealth] (z) - + (w);
    \draw[black,thick,-stealth] (w) - + (y);
    \draw[black,thick,-stealth] (y) - + (x);
    \draw[black,thick,-stealth] (y) - + (z);
\end{tikzpicture}
\caption{ The digraphs $D_{15}$ and $D_{16}$ in the proof of Theorem~\ref{thm:charact-nonconnected-4-partite}  }\label{fig:n=4:tri-fre-non-connected}
\end{center}
\end{figure}
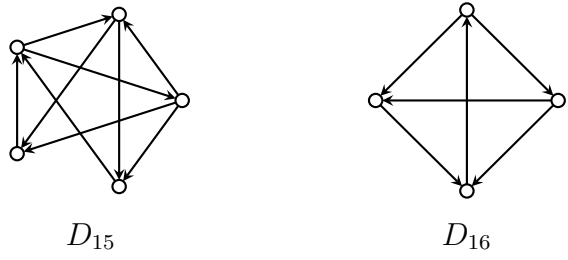
By Lemma~\ref{lem:disconect-condition},
it only remains to characterize disconnected and triangle-free competition graphs of tripartite tournaments.
The following theorem lists all the disconnected and triangle-free competition graphs of tripartite tournaments
\begin{Thm} \label{thm:charact-nonconnected-3-partite}
Let $G$ be a disconnected and triangle-free graph.
Then $G$ is the competition graph of a tripartite tournament if and only if $G$ is isomorphic to one of the followings:
\begin{itemize}
\item[(a)] an empty graph of order $3$
\item[(b)]
$P_2$ with at least one isolated vertex
\item[(c)]  $P_3$ with at least one isolated vertex
\item[(d)]   $P_4$ with at least one isolated vertex
\item[(e)]  $K_{1,3} \cup I_1$
\item[(f)]  $K_{1,3} \cup P_2$
\item[(g)]  $P_2 \cup P_4$
\item[(h)]  $P_2 \cup P_2$ with at least one isolated vertex
\item[(i)]  $P_2 \cup P_3$ with or without isolated vertices
\item[(j)]  $P_2 \cup P_2 \cup P_2$.
\end{itemize}
\end{Thm}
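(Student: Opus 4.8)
The plan is to establish the ``only if'' direction by a case analysis on the sizes of the partite sets of the tripartite tournament, and the ``if'' direction by explicit constructions. So let $D$ be an orientation of $K_{n_1,n_2,n_3}$ with $n_1\ge n_2\ge n_3$ whose competition graph $C(D)$ is disconnected and triangle-free; by Lemma~\ref{lem:sizes-of-3partite}, $(n_1,n_2,n_3)$ is one of $(2,2,2)$, $(3,2,1)$, $(2,2,1)$, or $(n_1,1,1)$ with $n_1\ge1$, the last case being allowed now that $C(D)$ need not be connected. In the case $(2,2,2)$ we have $|V(D)|=6$ and $|A(D)|=12$, so $d^-(v)\le2$ for all $v$ forces $d^-(v)=d^+(v)=2$ for every vertex; a one-line argument shows $C(D)$ has no isolated vertex, hence $C(D)$ is a disjoint union of paths on at least two vertices and cycles of length at least $4$. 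Here $|E(C(D))|$ equals the number of distinct in-neighbourhoods, two vertices sharing an in-neighbourhood form a $P_2$ component by Lemma~\ref{lem:same-out-in-neighbor-same-partite}, and such a $P_2$ component $\{v,a\}$ forces $N^+(v)=N^+(a)$, which on iterating shows $D$ is the ``doubled $3$-cycle'' whose competition graph is $P_2\cup P_2\cup P_2$. Since $|E(C(D))|=6$ would give the triangle-containing $C_3\cup C_3$, and $4\le|E(C(D))|\le5$ forces (pigeonhole on six vertices) a repeated in-neighbourhood, hence a $P_2$ component, hence $C(D)=P_2\cup P_2\cup P_2$ with only three edges, we conclude $|E(C(D))|\le3$ and therefore $C(D)\cong P_2\cup P_2\cup P_2$.

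For the finite cases $(3,2,1)$ and $(2,2,1)$ I would enumerate. Counting arcs fixes the indegree sequence---$(2,2,2,2,2,1)$ for $(3,2,1)$ (no vertex has indegree $0$, since $\sum d^-=11>10$), and $(2,2,2,2,0)$ or $(2,2,2,1,1)$ for $(2,2,1)$---so $|E(C(D))|\le5$ and $\le4$, respectively. The inequality $\deg_{C(D)}(v)\le d^+(v)$ together with $d^+(v)\ge|V(D)|-|X_v|-2$ (Lemma~\ref{lem:condition-outdegree-traingle-free}, where $X_v$ is the part containing $v$) restricts where a vertex of large degree can lie, and Lemma~\ref{lem:same-out-in-neighbor-same-partite} controls when a pair of vertices splits off as a $P_2$ component. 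Running through the disconnected triangle-free graphs of the allowed order and size, and discarding those that would force a triangle on the three vertices lying in the one-element and two-element parts---the obstruction used repeatedly in Lemmas~\ref{lem:no-ori-1,2,4-triangle} and~\ref{lem:K_{1,2,3}}---leaves precisely the graphs on five and six vertices listed in the statement.

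For the case $(n_1,1,1)$ I would argue structurally. Write the parts as $\{u_1,\dots,u_{n_1}\}$, $\{v\}$, $\{w\}$; since there are no arcs among the $u_i$, every edge of $C(D)$ is a $u_iu_j$ edge witnessed by $v$ or $w$, a $u_iv$ or $u_iw$ edge witnessed by $w$ or $v$, or the edge $vw$; moreover $|N^-(v)|,|N^-(w)|\le2$, and any $u_i$ with $v\to u_i$ and $w\to u_i$ has outdegree $0$ and hence is isolated in $C(D)$. Organising the analysis around which pairs among $\{u_1,\dots,u_{n_1}\}$, $v$, $w$ have a common out-neighbour---so that only boundedly many vertices are non-isolated---shows that $C(D)$ is the empty graph of order $3$ (only when $n_1=1$), or one of $P_2$, $P_3$, $P_4$, $K_{1,3}\cup P_2$, $P_2\cup P_3$, $P_2\cup P_2\cup P_2$ together with some isolated vertices, all of which appear in the list.

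For the ``if'' part, a cyclic $3$-tournament realizes the empty graph of order $3$; the families $P_2$, $P_3$, $P_4$, $P_2\cup P_2$, and $P_2\cup P_3$, each with a prescribed number of isolated vertices, are realized by orientations of $K_{m,1,1}$ in which the arcs incident with $v$, $w$ and a bounded number of the $u_i$ create the required component while every remaining $u_i$ receives arcs from both $v$ and $w$ and becomes isolated; and the remaining finite graphs $K_{1,3}\cup I_1$, $K_{1,3}\cup P_2$, $P_2\cup P_4$, $P_2\cup P_3$, and $P_2\cup P_2\cup P_2$ are each realized by a small orientation of $K_{2,2,1}$, $K_{3,2,1}$, or $K_{2,2,2}$, displayed in a figure. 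I expect the main difficulty to be the bookkeeping in the cases $(3,2,1)$ and $(n_1,1,1)$: one must check that no disconnected triangle-free graph of the permitted order and size is omitted, attach the correct ``with/without isolated vertices'' qualifier to each, and, for $(n_1,1,1)$, run the argument uniformly in $n_1$; the triangle-forcing argument on the three vertices in the small parts is the recurring device that keeps the list finite.
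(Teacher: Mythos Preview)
Your overall strategy matches the paper's: case analysis on $(n_1,n_2,n_3)$ via Lemma~\ref{lem:sizes-of-3partite}, then explicit constructions for the ``if'' part. The $(2,2,2)$ case is handled correctly.

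Your $(n_1,1,1)$ analysis, however, contains a concrete error. After fixing, say, $v\to w$, the structure is more rigid than you allow: since $v\in N^-(w)$ and $|N^-(w)|\le2$, at most one $u_i$ lies in $N^-(w)$, so any $u_iu_j$ edge can only be witnessed by $v$ (never by $w$), and no $u_iw$ edge can exist at all. It follows that every vertex of $C(D)$ has degree at most~$2$, so $K_{1,3}\cup P_2$ is impossible in this case; and the only possible edges are a single $u_iu_j$ (via $v$), a single $u_kv$ (via $w$), and the edge $vw$, the last two always sharing the vertex $v$, so three pairwise disjoint edges---hence $P_2\cup P_2\cup P_2$---cannot occur here either. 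Conversely, you omit $P_2\cup P_2$ (with isolated vertices), which \emph{does} occur in this case (take the edges $u_iu_j$ and $vw$ with no $u_kv$ edge); you even list it among your ``if'' constructions, so this is an inconsistency. The paper's list for this case is: empty of order~$3$; or one of $P_2$, $P_3$, $P_4$, $P_2\cup P_2$, $P_3\cup P_2$, each with at least one isolated vertex. Your treatment of $(2,2,1)$ and $(3,2,1)$ is also too thin to count as a proof plan: the decisive step the paper carries out, and which you only gesture at, is to bound the number of isolated vertices---at most one for $(2,2,1)$ and none for $(3,2,1)$---by showing that any further isolated vertex forces a triangle on $V_2\cup V_3$; without this bound the enumeration you propose has far too many candidates. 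For $(3,2,1)$ the paper also separately excludes $|E(C(D))|=5$ (which would give $P_2\cup C_4$) via a short argument about two outdegree-$1$ vertices in the large part.
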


\begin{proof}
To show the ``only if" part, suppose that $D$ is an orientation of $K_{n_1,n_2,n_3}$ whose competition graph is disconnected and triangle-free where $n_1$, $n_2$, and $n_3$ are positive integers such that $n_1 \geq n_2 \geq n_3$.
 Then, by Lemma~\ref{lem:sizes-of-3partite},
 $(n_1,n_2,n_3) \in A \cup  \{(2,2,1), (2,2,2), (3,2,1)\} $ where $A=\{(m,1,1)\mid \text{$m$ is a positive integer}\}$.

 {\it Case 1}. $(n_1,n_2,n_3) \in A$.
 Let  $V_1:=\{x_1,\ldots,x_{n_1}\}$, $V_2:=\{y\}$, and $V_3:=\{z\}$ be the partite sets of $D$.
 Without loss of generality, we may assume
 \[(y,z)\in A(D).\]
  Suppose that $C(D)$ is an empty graph. Then $y$ is an isolated vertex in $C(D)$. If $d^+(y) \geq 2$, then a vertex in $V_1$ should be an out-neighbor of $y$ and so $y$ is adjacent to one of the vertex and $z$ in $C(D)$, which is a contradiction. Therefore $d^+(y)=1$ and so $N^+(y)=\{z\}$.
If $n_1 \geq 2$, then the in-neighbors of $y$ are adjacent in $C(D)$, which is a contradiction.
Therefore $n_1=1$. Thus $C(D)$ is isomorphic to three isolated vertices.

Now we suppose that $C(D)$ is not an empty graph.
 Then $y$ is the only possible neighbor of $z$ in $C(D)$. 
Moreover, $z$ and at most one vertex in $V_1$ are the only possible neighbors of $y$ in $C(D)$. 
Since $z$ has indegree at most $2$ in $D$,
$y$ is the only possible common out-neighbor of two vertices in $V_1$.
Therefore only one pair of vertices in $V_1$ is possibly adjacent in $C(D)$.
Hence the following are the only possible graphs isomorphic to $C(D)$:
\begin{itemize}
\item an empty graph of order $3$
\item $P_4$ with at least one isolated vertex
\item $P_3 \cup P_2$ with at least one isolated vertex
\item $P_3$ with at least one isolated vertex
\item $P_2\cup P_2$ with at least one isolated vertex
\item $P_2$ with at least one isolated vertex.
\end{itemize}

{\it Case 2}. $(n_1,n_2,n_3)=(2,2,1)$.
Suppose, to the contrary, that $C(D)$ has $l$ isolated vertices for some positive integer $l \geq 2$.
By Lemma~\ref{lem:condition-outdegree-traingle-free},
each vertex in $D$ has outdegree at least $1$.
By the way, since $2|V(D)|-|A(D)|=2$,
$D$ has at most two vertices of indegree $1$ by Lemma~\ref{lem:condition-of-indegree-1}.
Then, since each out-neighbor of isolated vertices
has indegree $1$,
$l \leq 2$ and so $l=2$.
Let $u_1$ and $u_2$ be the isolated vertices in $C(D)$.
Then, $d^+(u_1) =d^+(u_2) =1$.
Suppose that $u_1$ and $u_2$ are contained in distinct partite sets in $D$.
Without loss of generality,
we may assume $(u_1,u_2)\in A(D)$.
Then $d^-(u_2)=1$.
However, since $d^+(u_2)=1$, $d^+(u_2)+d^-(u_2)=2 \neq |V(D) \setminus X|$  where $X$ is a partite set containing $u_2$, which is impossible.
Therefore $u_1$ and $u_2$ belong to the same partite set of $D$. Then $\{u_1,u_2\} \subseteq V_1$ or $\{u_1,u_2\} \subseteq V_2$. Without loss of generality, we may assume $\{u_1,u_2\} \subseteq V_1$.
Then $V_1=\{u_1,u_2\}$.
Let $v_1$ and $v_2$ be the out-neighbors of $u_1$ and $u_2$, respectively.
Then $N^-(v_1)=\{u_1\}$ and $N^-(v_2)=\{u_2\}$.
Therefore there is no arc between $v_1$ and $v_2$, and so $v_1$ and $v_2$ belong to the same partite set $V_2$.
Then $V_2=\{v_1,v_2\}$.
Since $d^-(v_1)=d^-(v_2)=1$, the vertex $z$, in the remaining partite set of $D$, is a common out-neighbor of $v_1$ and $v_2$.
Then $N^-(z)=\{v_1,v_2\}$
and so $N^+(z)=\{u_1,u_2\}$.
Therefore $u_1$ (resp.\ $u_2$) is a common out-neighbor of $v_2$ (resp.\ $v_1$) and $z$.
Thus $\{v_1,v_2,z\}$ forms a triangle in $C(D)$, which is a contradiction.
Hence $C(D)$ has at most one isolated vertex.
Therefore $C(D)$ has at most three components.

Since $|A(D)|=8$, $|E(C(D))| \leq 4$ by Lemma~\ref{lem:number-edges-triangle-free}.
If $|E(C(D))| \leq 1$, then $C(D)$ has at least $2$ isolated vertices, which is a contradiction.
Therefore $2 \leq |E(C(D))| \leq 4$.
If $|E(C(D))|=2$, then, $C(D)$ is isomorphic to $P_2 \cup P_2\cup I_1$.
If $|E(C(D))|=3$, then $C(D)$ is isomorphic to $ P_4 \cup I_1$ or $ K_{1,3}  \cup I_1$ or $P_3 \cup P_2$.
Suppose $|E(C(D))|=4$.
 Since $|A(D)|=8$ and $|E(C(D))|=4$, there exists a vertex $w$ of indegree $0$ in $D$ and each vertex in $V(D) \setminus \{w\}$ has indegree $2$.
Moreover, for distinct vertices $a$ and $b$ of indegree $2$,
$N^-(a) \neq N^-(b) $.
Then, since $w$ has outdegree at least $3$, $w$ has degree at least $3$ in $C(D)$.
  Since $C(D)$ is disconnected and triangle-free, $K_{1,3}\cup I_1$ is the only possible graph isomorphic to $C(D)$, which contradicts the assumption that $|E(C(D))| =4$.
  Therefore $C(D)$ is isomorphic to one of
$P_2 \cup P_2 \cup I_1$ or
$ P_4 \cup I_1$ or $ K_{1,3}  \cup I_1$ or $P_3 \cup P_2$.

{\it Case 3}. $(n_1,n_2,n_3)=(2,2,2)$.
Then $|A(D)|=12$.
Since $|V(D)|=6$ and each vertex of $D$ has indegree at most $2$,
\begin{equation}
\label{eq:thm:charact-nonconnected-3-partite-4}
d^-(v)=2
\end{equation}
 and so
 \begin{equation}
\label{eq:thm:charact-nonconnected-3-partite-5}
d^+(v)=2
\end{equation}
for each vertex $v$ in $D$.
Therefore $C(D)$ has no isolated vertex.
Thus each component of $C(D)$ contains at least two vertices.
Let $t$ be the number of the components of $C(D)$.
Then $t \leq 3$.
Since $C(D)$ is disconnected, $t=2$ or $t=3$.
Suppose, to the contrary, that $t=2$.
Then, since $C(D)$ is triangle-free,
it is easy to check that $|E(C(D))| \leq 5$.
Since $|V(D)|=6$,
there exist at least two vertices $a_1$ and $a_2$ sharing the same in-neighborhood by~\eqref{eq:thm:charact-nonconnected-3-partite-4}.
Then $a_1$ and $a_2$ are contained in the same partite set and form a component in $C(D)$ by Lemma~\ref{lem:same-out-in-neighbor-same-partite}.
Then the other component must contain four vertices.
Without loss of generality, we may assume
$V_1:=\{a_1,a_2\}$ is a partite set of $D$.
Let $\{b_1,b_2\}=N^-(a_1)=N^-(a_2)$ for some vertices $b_1$ and $b_2$ in $D$. Then $N^+(b_1)=N^+(b_2)=\{a_1,a_2\}$ by~\eqref{eq:thm:charact-nonconnected-3-partite-5}.
Therefore $b_1$ and $b_2$ are contained in the same partite sets and $\{b_1,b_2\}$ forms a component in $C(D)$ by Lemma~\ref{lem:same-out-in-neighbor-same-partite},
which is a contradiction.
Therefore $t\neq 2$ and so $t=3$.
Then, since each component of $C(D)$ contains at least two vertices, $C(D)$ must be isomorphic to $P_2 \cup P_2 \cup P_2$.

{\it Case 4}. $(n_1,n_2,n_3)=(3,2,1)$.
Then $|A(D)|=11$.
Since each vertex has indegree at most $2$ in $D$, one vertex has indegree $1$ and the other vertices have indegree $2$.
Let $V_1$, $V_2$, and $V_3$ be the partite sets of $D$ satisfying $|V_1|=3$, $|V_2|=2$, and $|V_3|=1$ and $v^*$ be the vertex of indegree $1$ in $D$.
Then, since $(n_1,n_2,n_3)=(3,2,1)$,
each vertex in $V_1$ has outdegree at least $1$ and
each vertex in $V_2 \cup V_3$ has outdegree at least $2$.

Suppose, to the contrary, that $C(D)$ has an isolated vertex $u$.
Since $v^*$ is the only vertex of indegree $1$, $N^+(u)=\{v^*\}$.
Therefore $u\in V_1$.
Then $v^* \in V_2\cup V_3$.
Suppose $v^* \in V_3$.
Since $d^-(v^*)=1$,
$N^+(v^*)=(V_1 \cup V_2) \setminus \{u\}$.
Moreover, since $V_2 \subset N^+(v^*)$ and each vertex in $D$ other than $v^*$ has indegree $2$,
each vertex in $V_2$ has an out-neighbor in $V_1\setminus \{u\}$.
Therefore each vertex in $V_2$ is adjacent to $v^*$ in $C(D)$.
By the way, since $N^+(u)=\{v^*\}$,
 $u$ is a common out-neighbor of the two vertices in $V_2$.
Therefore $V_2 \cup \{v^*\}$ forms a triangle in $C(D)$, which is a contradiction.
Thus $v^* \in V_2$.
Let $V_1=\{u,x_1,x_2\}$, $V_2=\{v^*,y\}$, and $V_3=\{z\}$.
Then $N^+(v^*)=\{x_1,x_2,z\}$ and
\[N^-(u)=\{y,z\},\] so $y$ and $z$ are adjacent in $C(D)$.
Since $d^-(y)=2$, $d^+(y)=2$.
Thus
\[N^+(v^*) \cap N^+(y) \neq \emptyset\]
and so $v^*$ and $y$ are adjacent in $C(D)$.
Since $d^-(z)=2$ and $v^* \in N^-(z)$,
$\{x_1,x_2 \} \not \subset N^-(z)$.
Therefore $N^+(z) \cap \{x_1,x_2\} \neq \emptyset$ and so \[N^+(v^*) \cap N^+(z) \neq \emptyset.\]
Thus $v^*$ and $z$ are adjacent in $C(D)$.
Hence $\{v^*,y,z\}$ forms a triangle in $C(D)$, which is a contradiction.
Consequently, we have shown that $C(D)$ has no isolated vertex, so each component in $C(D)$ has size at least two.
Then, since $|V(D)|=6$, $C(D)$ has two or three components.
If $C(D)$ has three components,
then $C(D)$ must be isomorphic to $P_2 \cup P_2 \cup P_2$.

Now we suppose that $C(D)$ has two components.
Then it is easy to check that $4\leq |E(C(D))| \leq 5$ since $C(D)$ is triangle-free and has no isolated vertices.
Suppose, to the contrary, that $|E(C(D))|=5$.
Then, since the vertices in $C(D)$ except the five vertices of indegree $2$ have indegree less than $2$, no pair of adjacent vertices in $C(D)$ have two distinct common out-neighbors in $D$.
Moreover, $C(D)$ must be isomorphic to $P_2 \cup C_4$ where $C_4$ is a cycle of length $4$.
Since only one vertex has indegree $1$ and the other vertices have indegree $2$ in $D$,
there exist two vertices $a$ and $b$ in $V_1$ which have outdegree $1$ in $D$.
Then $a$ and $b$ have at most degree $1$ in $C(D)$, so $\{a,b\}$ is a path component in $C(D)$.
Therefore
$a$ and $b$ have a common out-neighbor $c$, in $D$.
Then $a$ and $b$ are common out-neighbors of the two vertices in $V_2\cup V_3 \setminus \{c\}$, which is a contradiction.
Hence $|E(C(D))| \neq 5$ and so $|E(C(D))| = 4$.
Since five vertices have indegree $2$ in $D$,
there exists a pair of adjacent vertices which have two common out-neighbors.
Therefore
 there exist two vertices whose in-neighbors are the same. Then, since each vertex has outdegree at least $1$ by Lemma~\ref{lem:condition-outdegree-traingle-free}, the two vertices form a component in $C(D)$ by Lemma~\ref{lem:same-out-in-neighbor-same-partite}.
Thus $C(D)$ must be isomorphic to  $P_2 \cup K_{1,3}$ or $P_2 \cup P_4$.
Hence we have shown that the ``only if" part is true.

   Now we show the ``if" part.
   The competition graph of the digraph $D_{17}$ given in Figure~\ref{fig:3-partite-tri-fre-non-connected}
   is an empty graph of order $3$.

Now we fix a positive integer $k$.
Let $D_{18}$ be a tripartite tournament with the partite sets $\{w_1,\ldots,w_k\}$, $\{x\}$, $\{y\}$ and the arc set
\[A(D_{18})=\{(x,y)\}\cup \{(x,w_i),(y,w_i) \mid 1\leq i \leq k\}\]
   (see the digraph $D_{18}$ given in Figure~\ref{fig:3-partite-tri-fre-non-connected} for an illustration). Then $C(D_{18})$ is isomorphic to $P_2$ with $k$ isolated vertices.

Let $D_{19}$ be a tripartite tournament with the partite sets $\{v,w_1,\ldots,w_k\}$, $\{x\}$, $\{y\}$ and the arc set
\[A(D_{19})=\{(v,x),(v,y),(x,y)\}\cup \{(x,w_i),(y,w_i) \mid 1\leq i \leq k\}\]
   (see the digraph  $D_{19}$ given in Figure~\ref{fig:3-partite-tri-fre-non-connected} for an illustration). Then $C(D_{19})$ is the path $vxy$ together with $k$ isolated vertices.

Let $D_{20}$ be a tripartite tournament with the partite sets $\{v_1,v_2,w_1,\ldots,w_k\}$, $\{x\}$, $\{y\}$ and the arc set
\[A(D_{20})=\{(v_1,x),(v_2,x),(v_2,y),(x,y),(y,v_1)\}\cup \{(x,w_i),(y,w_i) \mid 1\leq i \leq k\}\]
   (see the digraph $D_{20}$ given in Figure~\ref{fig:3-partite-tri-fre-non-connected} for an illustration).
 Then $C(D_{20})$ is the path $v_1v_2xy$ with $k$ isolated vertices.

 The competition graphs of the digraphs $D_{21},D_{22}$ and $D_{23}$ given in Figure~\ref{fig:3-partite-tri-fre-non-connected} are isomorphic to $K_{1,3} \cup I_1$, $K_{1,3}\cup P_2$, and $P_2 \cup P_4$, respectively.

 Let $D_{24}$ be a tripartite tournament with the partite sets $\{v_1,v_2,w_1,\ldots,w_k\}$, $\{x\}$, $\{y\}$ and the arc set
\[A(D_{24})=\{(v_1,x),(v_2,x),(x,y),(y,v_1),(y,v_2)\}\cup \{(x,w_i),(y,w_i) \mid 1\leq i \leq k\}\]
   (see the digraph  $D_{24}$ given in Figure~\ref{fig:3-partite-tri-fre-non-connected} for an illustration).
 Then $C(D_{24}$) is isomorphic to $P_2 \cup P_2$ with $k$ isolated vertices.

 Let $D_{25}$ be a tripartite tournament with the partite sets $\{v_1,v_2,v_3,w_1,\ldots,w_k\}$, $\{x\}$, $\{y\}$ and the arc set
\[A(D_{25})=\{(v_1,x),(v_2,y),(v_3,x),(x,v_2),(x,y),(y,v_1),(y,v_3)\}\cup \{(x,w_i),(y,w_i) \mid 1\leq i \leq k\}\]
   (see the digraph  $D_{25}$ given in Figure~\ref{fig:3-partite-tri-fre-non-connected} for an illustration).
 Then $C(D_{25}$) is isomorphic to $P_2 \cup P_3$ with $k$ isolated vertices.

 The competition graphs of the digraphs $D_{26}$ and $D_{27}$ given in Figure~\ref{fig:3-partite-tri-fre-non-connected} are isomorphic to $P_2 \cup P_3$ and $P_2 \cup P_2 \cup P_2$, respectively.
 Hence we have shown that the ``if'' part is true.
\end{proof}

\begin{figure}
\begin{center}
\begin{tikzpicture}[auto,thick, scale=1.2]
    \tikzstyle{player}=[minimum size=5pt,inner sep=0pt,outer sep=0pt,draw,circle]
    \tikzstyle{player2}=[minimum size=2pt,inner sep=0pt,outer sep=0pt,draw,circle]
    \tikzstyle{source}=[minimum size=5pt,inner sep=0pt,outer sep=0pt,ball color=black, circle]
    \tikzstyle{arc}=[minimum size=5pt,inner sep=1pt,outer sep=1pt, font=\footnotesize]

     \draw (270:1.5cm) node (name) {$D_{17}$};
       \path (0:1cm)   node [player]  (x1) {};
    \path (120:1cm)   node [player]  (y1) {};
    \path (240:1cm)   node [player]  (z1) {};

   \draw[black,thick,-stealth] (x1) - + (y1);
    \draw[black,thick,-stealth] (y1) - + (z1);
    \draw[black,thick,-stealth] (z1) - + (x1);
\end{tikzpicture}
\hspace{2em}
\begin{tikzpicture}[auto,thick, scale=1.2]
    \tikzstyle{player}=[minimum size=5pt,inner sep=0pt,outer sep=0pt,draw,circle]
     \tikzstyle{player2}=[minimum size=3pt,inner sep=0pt,outer sep=0pt,fill,color=black, circle]
    \tikzstyle{source}=[minimum size=5pt,inner sep=0pt,outer sep=0pt,ball color=black, circle]
    \tikzstyle{arc}=[minimum size=5pt,inner sep=1pt,outer sep=1pt, font=\footnotesize]

     \draw (270:1.5cm) node (name) {$D_{18}$};
       \path (0:1cm)   node [player]  (x1) [label=right:$y$] {};
    \path (120:1cm)   node [player]  (y1) [label=left:$x$] {};
    \path (240:1cm)   node [player]  (z1) [label=left:$w_1$] {};

    \path (260:1cm)   node [player2]  (z4) {};
    \path (280:1cm)   node [player2]  (z5) {};
    \path (300:1cm)   node [player2]  (z6) {};
    \path (320:1cm)   node [player]  (z7) [label=right:$w_k$]{};

   \draw[black,thick,-stealth] (x1) - + (z1);
   \draw[black,thick,-stealth] (y1) - + (z1);
   \draw[black,thick,-stealth] (y1) - + (x1);
   \draw[black,thick,-stealth] (x1) - + (z7);
   \draw[black,thick,-stealth] (y1) - + (z7);
\end{tikzpicture}
\hspace{2em}
\begin{tikzpicture}[auto,thick, scale=1.2]
    \tikzstyle{player}=[minimum size=5pt,inner sep=0pt,outer sep=0pt,draw,circle]
     \tikzstyle{player2}=[minimum size=3pt,inner sep=0pt,outer sep=0pt,fill,color=black, circle]
    \tikzstyle{source}=[minimum size=5pt,inner sep=0pt,outer sep=0pt,ball color=black, circle]
    \tikzstyle{arc}=[minimum size=5pt,inner sep=1pt,outer sep=1pt, font=\footnotesize]

     \draw (270:1.5cm) node (name) {$D_{19}$};
       \path (0:1cm)   node [player] [label=right:$y$] (x1) {};
    \path (72:1cm)   node [player] [label=right:$x$] (y1) {};
    \path (216:1cm)   node [player]  (z1) [label=left:$w_1$]{};
    \path (144:1cm)    node [player]  (z2) [label=left:$v$] {};
    \path (236:1cm)   node [player2]  (z4) {};
    \path (256:1cm)   node [player2]  (z5) {};
    \path (276:1cm)   node [player2]  (z6) {};
 \path (296:1cm)    node [player]  (z7) [label=right:$w_k$] {};

   \draw[black,thick,-stealth] (x1) - + (z1);
    \draw[black,thick,-stealth] (z2) - + (x1);
    \draw[black,thick,-stealth] (y1) - + (x1);
    \draw[black,thick,-stealth] (y1) - + (z1);
    \draw[black,thick,-stealth] (z2) - + (y1);
       \draw[black,thick,-stealth] (x1) - + (z7);
   \draw[black,thick,-stealth] (y1) - + (z7);
\end{tikzpicture}
\hspace{2em}
\begin{tikzpicture}[auto,thick, scale=1.2]
    \tikzstyle{player}=[minimum size=5pt,inner sep=0pt,outer sep=0pt,draw,circle]
    \tikzstyle{player2}=[minimum size=3pt,inner sep=0pt,outer sep=0pt,fill,color=black, circle]
    \tikzstyle{source}=[minimum size=5pt,inner sep=0pt,outer sep=0pt,ball color=black, circle]
    \tikzstyle{arc}=[minimum size=5pt,inner sep=1pt,outer sep=1pt, font=\footnotesize]

     \draw (270:1.5cm) node (name) {$D_{20}$};
       \path (0:1cm)   node [player]  [label=right:$y$] (x1) {};
    \path (72:1cm)   node [player] [label=right:$x$] (y1) {};
    \path (260:1cm)   node [player]  (z1) [label=left:$w_1$]{};
    \path (200:1cm)    node [player]  (z2) [label=left:$v_2$] {};
    \path (140:1cm)   node [player]   [label=left:$v_1$]   (z3) {};
    \path (275:1cm)   node [player2]  (z4) {};
    \path (290:1cm)   node [player2]  (z5) {};
    \path (305:1cm)   node [player2]  (z6) {};
   \path (320:1cm)   node [player]   [label=right:$w_k$]   (z7) {};

    \draw[black,thick,-stealth] (x1) - + (z1);
    \draw[black,thick,-stealth] (z2) - + (x1);
    \draw[black,thick,-stealth] (y1) - + (x1);

    \draw[black,thick,-stealth] (z2) - + (y1);
    \draw[black,thick,-stealth] (y1) - + (z1);
    \draw[black,thick,-stealth] (z3) - + (y1);
    \draw[black,thick,-stealth] (x1) - + (z3);
           \draw[black,thick,-stealth] (x1) - + (z7);
   \draw[black,thick,-stealth] (y1) - + (z7);
\end{tikzpicture}

\vspace{3em}
\begin{tikzpicture}[auto,thick, scale=1.2]
    \tikzstyle{player}=[minimum size=5pt,inner sep=0pt,outer sep=0pt,draw,circle]
    \tikzstyle{source}=[minimum size=5pt,inner sep=0pt,outer sep=0pt,ball color=black, circle]
    \tikzstyle{arc}=[minimum size=5pt,inner sep=1pt,outer sep=1pt, font=\footnotesize]

    \draw (270:1.5cm) node (name) {$D_{21}$};
    \path (0:1cm)   node [player]  (x1) {};
    \path (72:1cm)   node [player]  (y1) {};
    \path (144:1cm)   node [player]  (y2) {};
    \path (216:1cm)    node [player]  (z1) {};
    \path (288:1cm)   node [player]  (z2) {};

    \draw[black,thick,-stealth] (x1) - + (y1);
    \draw[black,thick,-stealth] (x1) - + (y2);
    \draw[black,thick,-stealth] (z1) - + (x1);
    \draw[black,thick,-stealth] (x1) - + (z2);

    \draw[black,thick,-stealth] (y1) - + (z2);
    \draw[black,thick,-stealth] (z1) - + (y1);

    \draw[black,thick,-stealth] (y2) - + (z1);
    \draw[black,thick,-stealth] (z2) - + (y2);
\end{tikzpicture}
\hspace{5em}
\begin{tikzpicture}[auto,thick, scale=1.2]
    \tikzstyle{player}=[minimum size=5pt,inner sep=0pt,outer sep=0pt,draw,circle]
    \tikzstyle{source}=[minimum size=5pt,inner sep=0pt,outer sep=0pt,ball color=black, circle]
    \tikzstyle{arc}=[minimum size=5pt,inner sep=1pt,outer sep=1pt, font=\footnotesize]

    \draw (270:1.5cm) node (name) {$D_{22}$};
    \path (0:1cm)   node [player]  (x1) {};
    \path (60:1cm)   node [player]  (y1) {};
    \path (120:1cm)   node [player]  (y2) {};
    \path (180:1cm)    node [player]  (z1) {};
    \path (240:1cm)   node [player]  (z2) {};
    \path (300:1cm)   node [player]  (z3) {};

    \draw[black,thick,-stealth] (x1) - + (y1);
    \draw[black,thick,-stealth] (x1) - + (z2);
    \draw[black,thick,-stealth] (x1) - + (z3);
    \draw[black,thick,-stealth] (y2) - + (x1);
    \draw[black,thick,-stealth] (x1) - + (z1);

    \draw[black,thick,-stealth] (y1) - + (z3);
    \draw[black,thick,-stealth] (y1) - + (z2);
    \draw[black,thick,-stealth] (z1) - + (y1);

    \draw[black,thick,-stealth] (y2) - + (z1);
    \draw[black,thick,-stealth] (z2) - + (y2);
    \draw[black,thick,-stealth] (z3) - + (y2);
\end{tikzpicture}
\hspace{5em}
\begin{tikzpicture}[auto,thick, scale=1.2]
    \tikzstyle{player}=[minimum size=5pt,inner sep=0pt,outer sep=0pt,draw,circle]
    \tikzstyle{source}=[minimum size=5pt,inner sep=0pt,outer sep=0pt,ball color=black, circle]
    \tikzstyle{arc}=[minimum size=5pt,inner sep=1pt,outer sep=1pt, font=\footnotesize]

    \draw (270:1.5cm) node (name) {$D_{23}$};
    \path (0:1cm)   node [player]  (x1) {};
    \path (60:1cm)   node [player]  (y1) {};
    \path (120:1cm)   node [player]  (y2) {};
    \path (180:1cm)    node [player]  (z1) {};
    \path (240:1cm)   node [player]  (z2) {};
    \path (300:1cm)   node [player]  (z3) {};

    \draw[black,thick,-stealth] (x1) - + (y1);
    \draw[black,thick,-stealth] (x1) - + (z2);
    \draw[black,thick,-stealth] (x1) - + (z3);
    \draw[black,thick,-stealth] (y2) - + (x1);
    \draw[black,thick,-stealth] (z1) - + (x1);

    \draw[black,thick,-stealth] (y1) - + (z3);
    \draw[black,thick,-stealth] (y1) - + (z2);
    \draw[black,thick,-stealth] (x1) - + (y1);
    \draw[black,thick,-stealth] (z1) - + (y1);

    \draw[black,thick,-stealth] (y2) - + (z1);
    \draw[black,thick,-stealth] (z2) - + (y2);
    \draw[black,thick,-stealth] (z3) - + (y2);
\end{tikzpicture}

\vspace{3em}
\begin{tikzpicture}[auto,thick, scale=1.2]
    \tikzstyle{player}=[minimum size=5pt,inner sep=0pt,outer sep=0pt,draw,circle]
       \tikzstyle{player2}=[minimum size=3pt,inner sep=0pt,outer sep=0pt,fill,color=black, circle]
    \tikzstyle{source}=[minimum size=5pt,inner sep=0pt,outer sep=0pt,ball color=black, circle]
    \tikzstyle{arc}=[minimum size=5pt,inner sep=1pt,outer sep=1pt, font=\footnotesize]

     \draw (270:1.5cm) node (name) {$D_{24}$};
       \path (0:1cm)   node [player]  (x1)  [label=right:$y$] {};
    \path (72:1cm)   node [player]  (y1)  [label=right:$x$] {};
    \path (140:1cm)   node [player]  (z1)  [label=left:$v_1$]{};
    \path (200:1cm)    node [player]  (z2) [label=left:$v_2$] {};
    \path (260:1cm)   node [player]  (z3)  [label=left:$w_1$] {};

      \path (275:1cm)   node [player2]  (z4) {};
    \path (290:1cm)   node [player2]  (z5) {};
    \path (305:1cm)   node [player2]  (z6) {};
   \path (320:1cm)   node [player]   [label=right:$w_k$]   (z7) {};

   \draw[black,thick,-stealth] (x1) - + (z1);
    \draw[black,thick,-stealth] (x1) - + (z2);
    \draw[black,thick,-stealth] (y1) - + (x1);
    \draw[black,thick,-stealth] (z1) - + (y1);
    \draw[black,thick,-stealth] (z2) - + (y1);

    \draw[black,thick,-stealth] (x1) - + (z3);
    \draw[black,thick,-stealth] (y1) - + (z3);

    \draw[black,thick,-stealth] (x1) - + (z7);
    \draw[black,thick,-stealth] (y1) - + (z7);
\end{tikzpicture}
\hspace{2em}
\begin{tikzpicture}[auto,thick, scale=1.2]
    \tikzstyle{player}=[minimum size=5pt,inner sep=0pt,outer sep=0pt,draw,circle]
      \tikzstyle{player2}=[minimum size=3pt,inner sep=0pt,outer sep=0pt,fill,color=black, circle]
    \tikzstyle{source}=[minimum size=5pt,inner sep=0pt,outer sep=0pt,ball color=black, circle]
    \tikzstyle{arc}=[minimum size=5pt,inner sep=1pt,outer sep=1pt, font=\footnotesize]

     \draw (270:1.5cm) node (name) {$D_{25}$};
       \path (0:1cm)   node [player]  (x1) [label=right:$y$]  {};
    \path (60:1cm)   node [player]  (y1) [label=right:$x$]  {};
    \path (260:1cm)   node [player]  (z1) [label=left:$w_1$] {};
    \path (170:1cm)    node [player]  (z2) [label=left:$v_2$]{};
    \path (215:1cm)   node [player]  (z3) [label=left:$v_3$] {};
    \path (120:1cm)   node [player]  (z4)[label=left:$v_1$]  {};
    \path (275:1cm)   node [player2]  (z5) {};
    \path (290:1cm)   node [player2]  (z6) {};
    \path (305:1cm)   node [player2]  (z7) {};
 \path (320:1cm)   node [player]  (z8) [label=right:$w_k$] {};

    \draw[black,thick,-stealth] (x1) - + (z1);
    \draw[black,thick,-stealth] (z2) - + (x1);
    \draw[black,thick,-stealth] (y1) - + (x1);

    \draw[black,thick,-stealth] (y1) - + (z2);
    \draw[black,thick,-stealth] (y1) - + (z1);
    \draw[black,thick,-stealth] (z3) - + (y1);
    \draw[black,thick,-stealth] (z4) - + (y1);
    \draw[black,thick,-stealth] (x1) - + (z4);
    \draw[black,thick,-stealth] (x1) - + (z3);

    \draw[black,thick,-stealth] (x1) - + (z8);
    \draw[black,thick,-stealth] (y1) - + (z8);
\end{tikzpicture}
\hspace{5em}
\begin{tikzpicture}[auto,thick, scale=1.2]
    \tikzstyle{player}=[minimum size=5pt,inner sep=0pt,outer sep=0pt,draw,circle]
    \tikzstyle{source}=[minimum size=5pt,inner sep=0pt,outer sep=0pt,ball color=black, circle]
    \tikzstyle{arc}=[minimum size=5pt,inner sep=1pt,outer sep=1pt, font=\footnotesize]

    \draw (270:1.5cm) node (name) {$D_{26}$};
    \path (0:1cm)   node [player]  (x1) {};
    \path (72:1cm)   node [player]  (y1) {};
    \path (144:1cm)   node [player]  (y2) {};
    \path (216:1cm)    node [player]  (z1) {};
    \path (288:1cm)   node [player]  (z2) {};

    \draw[black,thick,-stealth] (x1) - + (y1);
    \draw[black,thick,-stealth] (x1) - + (y2);
    \draw[black,thick,-stealth] (z1) - + (x1);
    \draw[black,thick,-stealth] (z2) - + (x1);

    \draw[black,thick,-stealth] (y1) - + (z1);
    \draw[black,thick,-stealth] (y1) - + (z2);

    \draw[black,thick,-stealth] (y2) - + (z1);
    \draw[black,thick,-stealth] (z2) - + (y2);
\end{tikzpicture}

\vspace{3em}
\begin{tikzpicture}[auto,thick, scale=1.2]
    \tikzstyle{player}=[minimum size=5pt,inner sep=0pt,outer sep=0pt,draw,circle]
    \tikzstyle{source}=[minimum size=5pt,inner sep=0pt,outer sep=0pt,ball color=black, circle]
    \tikzstyle{arc}=[minimum size=5pt,inner sep=1pt,outer sep=1pt, font=\footnotesize]

    \draw (270:1.5cm) node (name) {$D_{27}$};
    \path (0:1cm)   node [player]  (x1) {};
    \path (60:1cm)   node [player]  (x2) {};
    \path (120:1cm)   node [player]  (y1) {};
    \path (180:1cm)    node [player]  (y2) {};
    \path (240:1cm)   node [player]  (z1) {};
    \path (300:1cm)   node [player]  (z2) {};

    \draw[black,thick,-stealth] (x1) - + (z1);
    \draw[black,thick,-stealth] (x1) - + (z2);
    \draw[black,thick,-stealth] (y1) - + (x1);
    \draw[black,thick,-stealth] (y2) - + (x1);  \draw[black,thick,-stealth] (x2) - + (z1);
    \draw[black,thick,-stealth] (x2) - + (z2);
    \draw[black,thick,-stealth] (y1) - + (x2);
    \draw[black,thick,-stealth] (y1) - + (x1);
    \draw[black,thick,-stealth] (y2) - + (x2);
    \draw[black,thick,-stealth] (z1) - + (y1);
    \draw[black,thick,-stealth] (z2) - + (y1);
    \draw[black,thick,-stealth] (z1) - + (y2);
    \draw[black,thick,-stealth] (z2) - + (y2);
\end{tikzpicture}
\caption{The digraphs in the proof of Theorem~\ref{thm:charact-nonconnected-3-partite}  }\label{fig:3-partite-tri-fre-non-connected}
\end{center}
\end{figure}
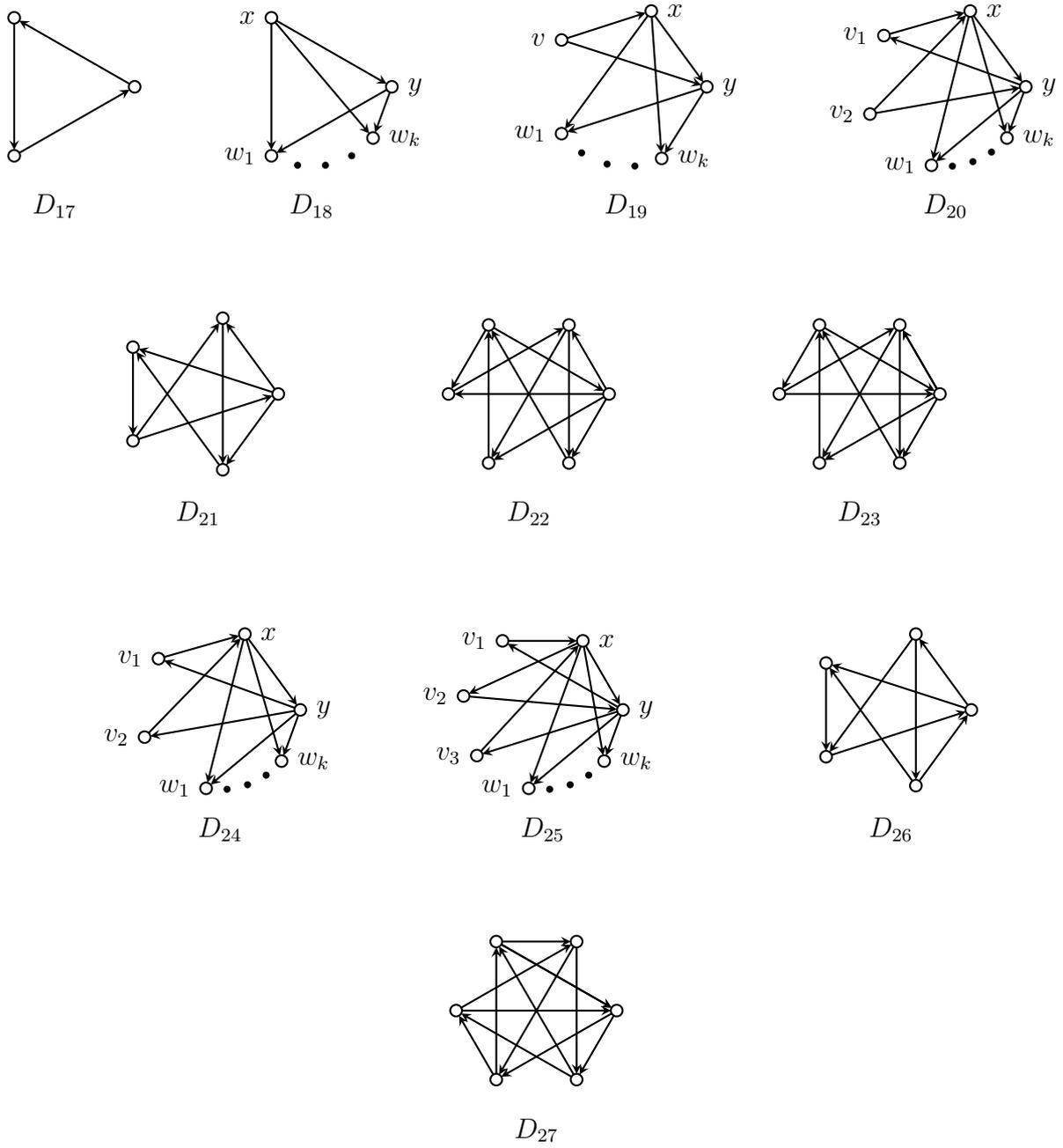

\section{Closing remarks
} \label{sec:number-of}

In this paper, we completely identified the triangle-free graphs which are the competition graphs of $k$-partite tournaments for $k \geq 2$,
following up the previous paper~\cite{choi2022competitively} in which all the complete graphs which are the competition graphs of $k$-partite tournaments for $k \geq 2 $ are found.

Taking into account the fact that a cycle is a $2$-regular graph and a complete graph of order $n$ is an $(n-1)$-regular graph,
characterizing the cubic graphs which are
the competition graphs of $k$-partite tournament for $k \geq 2 $ seems to be an interesting
research problem to be resolved in the next step.
\bibliographystyle{abbrv}
%

\end{document}